\providecommand{\U}[1]{\protect\rule{.1in}{.1in}}
\newtheorem{theorem}{Theorem}[section]
\newtheorem{claim}[theorem]{Assumption}
\newtheorem{definition}{Definition}[section]
\newtheorem{proposition}{Proposition}[section]
\newenvironment{proof}[1][Proof]{\noindent\textbf{#1.} }{\ \rule{0.5em}{0.5em}}
\begin{document}

\title{Nonlinear Luenberger-like observers for some anthracnose models}
\author{David Jaures FOTSA MBOGNE$^{a}$\thanks{Corresponding author Address: email:
mjdavidfotsa@gmail.com, P.O. Box 455, ENSAI, The University of Ngaoundere} ,
Duplex Elvis HOUPA DANGA$^{b}$, David BEKOLLE$^{b}$\thanks{Co-authors emails:
e\_houpa@yahoo.com, bekolle@yahoo.fr}\\$^{a}$Department of Mathematics and Computer Science, ENSAI, The University of Ngaoundere\\$^{b}$Department of Mathematics and Computer Science, Fac.Sci., The University
of Ngaoundere }
\date{}
\maketitle

\begin{abstract}
In this paper, we propose observers for the dynamics of anthracnose disease
described in \cite{fotsa}. Spatial and non spatial versions of the observers
are given following an approach similar to \cite{fotsa}. The models given in
\cite{fotsa} are improved in order to be more realistic. The conditions on
parameters are more general. There are also changes in the equations modelling
the dynamics of the berry volume $\left(  v\right)  $ and the rot volume
$\left(  v_{r}\right)  $. We study theoretically the proposed observers in
terms of well-posedness and convergence. We make several simulations to assess
the effectiveness of observers which definitely display fairly good behaviour.

\textbf{KeyWords--- }Anthracnose modelling, Nonlinear observer, Stability.

\textbf{AMS Classification--- }93C15, 93C20, 93B07, 93D15.

\end{abstract}

\section{Introduction}

\qquad Anthracnose is a phytopathology which occurs on several commercial
tropical crops. The coffee is concerned by that disease due to the
\textit{Colletotrichum} \textit{kahawae} which is an ascomycete fungus
\cite{bieysse,boisson,chen,jeffries,mouen09,muller70,wharton}. Several models
of the athracnose dynamics have been proposed in the literature
\cite{danneberger,dodd,duthie,mouen07,mouen09,mouen072,mouen03,mouen08,wharton}
in order to better understand and control it. Recently, in \cite{fotsa,fotsa2}
the authors studied the optimal control of a diffusion model of anthracnose
with continuous and impulsive strategies. Optimal controls were computed with
respect to given cost functionals. The general model proposed in \cite{fotsa}
was given by the following equations%

\begin{equation}
\partial_{t}\theta=\alpha\left(  t,x\right)  \left(  1-w\left(  t,x\right)
\theta\right)  +\operatorname{div}\left(  A\left(  t,x\right)  \nabla
\theta\right)  ,\text{ on }%
%TCIMACRO{\U{211d} }%
%BeginExpansion
\mathbb{R}
%EndExpansion
_{+}^{\ast}\times U \label{ModelSpace1}%
\end{equation}%
\begin{equation}
\left\langle A\left(  t,x\right)  \nabla\theta\left(  t,x\right)  ,n\left(
x\right)  \right\rangle =0,\text{ on }%
%TCIMACRO{\U{211d} }%
%BeginExpansion
\mathbb{R}
%EndExpansion
_{+}^{\ast}\times\partial U \label{ModelSpace2}%
\end{equation}
where $n\left(  x\right)  $ denotes the normal vector on the boundary at $x$
and
\begin{equation}
\theta\left(  0,x\right)  =\rho\left(  x\right)  \in\left[  0,1\right[
,\text{ }x\in\overline{U}\subseteq%
%TCIMACRO{\U{211d} }%
%BeginExpansion
\mathbb{R}
%EndExpansion
^{3} \label{ModelSpace3}%
\end{equation}%
\begin{equation}
\partial_{t}v=\beta\left(  t,x,\theta\right)  \left(  1-v/\left(  \left(
1-\theta\right)  \eta\left(  t,x\right)  v_{\max}\right)  \right)
\label{ModelSpace4}%
\end{equation}%
\begin{equation}
\partial_{t}v_{r}=\gamma\left(  t,x,\theta\right)  \left(  1-v_{r}/v\right)
\label{ModelSpace5}%
\end{equation}%
\begin{equation}
\left(  v\left(  0,x\right)  ,v_{r}\left(  0,x\right)  \right)  \in\left]
0,v_{\max}\right]  \times\left[  0,v_{\max}\right]  ,\text{ }x\in\overline
{U}\subseteq%
%TCIMACRO{\U{211d} }%
%BeginExpansion
\mathbb{R}
%EndExpansion
^{3} \label{ModelSpace6}%
\end{equation}
where $w\left(  t,x\right)  =1/\left(  1-\sigma u\left(  t,x\right)  \right)
$ and $v\left(  0\right)  \geq v_{r}\left(  0\right)  $.

In the model above, $\theta$ is the inhibition rate, $v$ and $v_{r}$ are
respectively the fruit volume density and the rot volume density. The function
$v$ is upper bounded by a value $v_{\max}$ which models the natural fact that
the fruit growth is limited. Nonnegative functions $\alpha,\beta,\gamma:%
%TCIMACRO{\U{211d} }%
%BeginExpansion
\mathbb{R}
%EndExpansion
_{+}\times U\times%
%TCIMACRO{\U{211d} }%
%BeginExpansion
\mathbb{R}
%EndExpansion
\rightarrow%
%TCIMACRO{\U{211d} }%
%BeginExpansion
\mathbb{R}
%EndExpansion
_{+}$ characterize the effects of environmental and climatic conditions on the
rate of change of inhibition rate, fruit volume, and infected fruit volume
respectively \cite{danneberger,dodd,duthie}. There is a control $u$
representing the chemical strategy consisting on the effects after application
of fungicides. The positive term $1-\sigma\in\left]  0,1\right[  $ models the
inhibition rate corresponding to epidermis penetration by hyphae. Once the
epidermis has been penetrated, the inhibition rate cannot fall below
$1-\sigma$, even under the maximum control effort $(u=1)$. In the absence of
control effort $(u=0)$, the inhibition rate should increase towards $1$. The
effects of environmental and climatic conditions on the maximum fruit volume
are represented by the parameter $\eta$ which is a $\left]  0,1\right]
-$valued function. There is a spatial spread of the disease in the open domain
$U\subset\mathbb{R}^{3}$ of class $C^{1}$ through the diffusion term
$\operatorname{div}\left(  A\nabla\theta\right)  $. The boundary condition
$\left\langle A\nabla\theta,n\right\rangle =0$ where $A$ is a $3\times
3$-matrix $\left(  a_{ij}\right)  $ could be viewed as the law steering
migration of the disease between $U$ and its exterior. For instance, if $A$ is
reduced to the identity matrix $I$ then $\left\langle \nabla\theta
,n\right\rangle =0$ means that the domain $U$ does not have any exchange with
its exterior.

Also in \cite{fotsa} a within-host model has been studied and given by the
following equations :%

\begin{equation}
d_{t}\theta=\alpha\left(  t\right)  \left(  1-w\left(  t\right)
\theta\right)  \label{ModelIntraHote1}%
\end{equation}%
\begin{equation}
d_{t}v=\beta\left(  t,\theta\right)  \left(  1-v/\eta\left(  t\right)
v_{\max}\left(  1-\theta\right)  \right)  \label{ModelIntraHote2}%
\end{equation}%
\begin{equation}
d_{t}v_{r}=\gamma\left(  t,\theta\right)  \left(  1-v_{r}/v\right)
\label{ModelIntraHote3}%
\end{equation}%
\begin{equation}
\left(  \theta\left(  0\right)  ,v\left(  0\right)  ,v_{r}\left(  0\right)
\right)  \in\left[  0,1\right[  \times\left]  0,v_{\max}\right]  \times\left[
0,v_{\max}\right]  \label{ModelIntraHote4}%
\end{equation}
where $w\left(  t\right)  =1/\left(  1-\sigma u\left(  t\right)  \right)  $
and $v\left(  0\right)  \geq v_{r}\left(  0\right)  $. The parameters in
$\left(  \ref{ModelIntraHote1}\right)  -\left(  \ref{ModelIntraHote4}\right)
$ still have the same interpretations given in the general model $\left(
\ref{ModelSpace1}\right)  -\left(  \ref{ModelSpace6}\right)  $.

\qquad In several cases, especially for the results in \cite{fotsa,fotsa2} on
anthracnose disease, an optimal control strategy is given as a feedback and
needs to know the state of the system and parameters values. It could be
difficult to know exactly the dynamics of the inhibition rate while it is
easier to observe volumes $v$ and $v_{r}$. The aim of this paper is to provide
some observers for the state $\theta$ in the models $\left(  \ref{ModelSpace1}%
\right)  -\left(  \ref{ModelSpace6}\right)  $ and $\left(
\ref{ModelIntraHote1}\right)  -\left(  \ref{ModelIntraHote4}\right)  $
assuming that volumes $v$ and $v_{r}$ are known. An observer\footnote{See
\cite{trinh12} for more details on the topic.} is a function $\widehat{\theta
}$ having its own dynamics depending on the available observations ($v$ and
$v_{r}$) and converging towards a neighborhood of $\theta$. Let us recall the
following useful adapted definitions about the stability of an observer. The
authors refers to \cite{coron,luenberger,trinh12,tuscnak} and references
therein for more general and classical definitions.

\begin{definition}
Assume that $\theta$ is $S$-valued. An observer $\widehat{\theta}$ is said to be

\begin{enumerate}
\item[$\left(  i\right)  $] (Locally) stable if there are two neighborhoods
$U,V\subseteq S$ of $0$ and a time $t_{0}>0$ such that if $\theta\left(
0\right)  -\widehat{\theta}\left(  0\right)  \in U$ then $\forall t>t_{0}$,
$\theta\left(  t\right)  -\widehat{\theta}\left(  t\right)  \in V$;

\item[$\left(  ii\right)  $] (Locally) asymptotically stable if there is a
neighborhood $U\subseteq S$ of $0$ such that if $\theta\left(  0\right)
-\widehat{\theta}\left(  0\right)  \in U$ then $\underset{t\rightarrow\infty
}{\lim}\theta\left(  t\right)  -\widehat{\theta}\left(  t\right)  =0$;

\item[$\left(  iii\right)  $] (Locally) exponentially asymptotically stable if
there are a neighborhood $U\subseteq S$ of $0$, a positive real constant $k$
and a time $t_{0}>0$ such $\forall t>t_{0}$, $\left\Vert \theta\left(
t\right)  -\widehat{\theta}\left(  t\right)  \right\Vert <\exp\left(
-kt\right)  $.
\end{enumerate}

Those local stabilitiy properties are said to be global if one can extend the
set $U$ to the whole set $S$.
\end{definition}

\qquad The remainder of the work is organized as it follows. In section
\ref{SectionNewModelling} we present some new realistic considerations made
for the models. We also show that the new model is well-posed. We design
observers for the non spatial model in section \ref{SectionObserverODE}. A
theoretical study is done in subsection \ref{SubsectionDesignTheoric1} and we
make numerical evaluations in subsection \ref{ObserversEvalutionNum1}. We also
design observers for the spatial model in section \ref{SectionObserverPDE} and
we study them theoretically in subsection \ref{SubsectionDesignTheoric2}. A
numerical evaluation is carried out in subsection \ref{ObserversEvalutionNum2}%
. Finally, there is a conclusion in section \ref{Conclusion}.

\section{New modelling of anthracnose dynamics\label{SectionNewModelling}}

\qquad In this section we introduce some changes in the models given in
\cite{fotsa} based on some remarks in order to become more realistic. Our
modifications are especially made on the description of the dynamics of $v$
and $v_{r}$.

Looking at equation $\left(  \ref{ModelSpace4}\right)  $ we can see that when
$\theta$ tends to the value $1$, $v$ tends with an infinite speed to zero.
However, we think that either there is a disease or not the berries will reach
and remain greater than a minimal volume. Indeed, the disease can not be
triggered when the berry is less than a minimal size as for instance the bud
size. Even if the fruit totally rots it keeps a minimal size. We then suggest
to add a very small nonegative term $\varepsilon\ll1$ in such way that
equation $\left(  \ref{ModelSpace4}\right)  $ becomes
\begin{equation}
\partial_{t}v=\beta\left(  t,x,\theta\right)  \left(  1-v/\left(  \left(
1+\varepsilon-\theta\right)  \eta\left(  t,x\right)  v_{\max}\right)  \right)
\label{ModelSpace42}%
\end{equation}
With the same arguments equation $\left(  \ref{ModelIntraHote2}\right)  $
becomes%
\begin{equation}
d_{t}v=\beta\left(  t,\theta\right)  \left(  1-v/\eta\left(  t\right)
v_{\max}\left(  1+\varepsilon-\theta\right)  \right)  \label{ModelIntraHote22}%
\end{equation}
The use of the term $\varepsilon$ generalizes the model in such way that we
can recover the original model given in \cite{fotsa} by setting $\varepsilon
=0$. With a positive $\varepsilon$ even if $\theta=1,$ $v$ might be also
positive. Indeed, the berries reach a minimal volume in the neighborhood of
$\varepsilon v_{\max}\underset{t}{\min}\left\{  \eta\left(  t\right)
\right\}  $.

We now focus on the equations $\left(  \ref{ModelSpace5}\right)  $ and
$\left(  \ref{ModelIntraHote3}\right)  $. In \cite{fotsa} it was established
that $v_{r}$ is $\left[  0,v_{\max}\right]  $ valued. However, that behaviour
is not sufficient. Indeed, $v_{r}$ should remain less than $v$. On the other
hand $v_{r}$ should remain equal to zero while $v$ is less than the minimal
value corresponding to the threshold volume where the disease can be
triggered. We then suggest that $v_{r}$ remains equal to zero when $v$ is not
greater than $0$. Instead of studiying directly $v_{r}$, we introduce a
proportion function $\rho$ depending on time and space variables, and given
such as
\begin{equation}
v_{r}=\rho v
\end{equation}

We propose the following dynamics for the rot proportion $\rho$: $\forall
x\in\overline{U}$, $\rho\left(  0,x\right)  \in\left[  0,1\right]  $ and%

\begin{equation}
\partial_{t}\rho\left(  t,x\right)  =\overline{\gamma}\left(  t,x,\theta
\left(  t,x\right)  ,v\left(  t,x\right)  ,\rho\left(  t,x\right)  \right)
\left(  1-\rho\left(  t,x\right)  \right)  ,\text{ }\forall\left(  t,x\right)
\in%
%TCIMACRO{\U{211d} }%
%BeginExpansion
\mathbb{R}
%EndExpansion
_{+}^{\ast}\times\overline{U}\text{,} \label{ModelSpace52}%
\end{equation}
where $\overline{\gamma}:%
%TCIMACRO{\U{211d} }%
%BeginExpansion
\mathbb{R}
%EndExpansion
_{+}\times U\times%
%TCIMACRO{\U{211d} }%
%BeginExpansion
\mathbb{R}
%EndExpansion
^{3}\rightarrow%
%TCIMACRO{\U{211d} }%
%BeginExpansion
\mathbb{R}
%EndExpansion
$ is a function satisfiying $\forall\left(  t,x\right)  \in%
%TCIMACRO{\U{211d} }%
%BeginExpansion
\mathbb{R}
%EndExpansion
_{+}\times U$,

\begin{claim}
\label{Hypothesis_1}$\overline{\gamma}$ is a measurable with respect to the
two first parameters and locally Lipschitz continuous with respect to the
third parameter.
\end{claim}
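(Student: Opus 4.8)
Since the statement is tagged as an \emph{Assumption}, it is a standing hypothesis on the modelling function $\overline{\gamma}$ rather than a consequence of the preceding material, so strictly there is nothing to deduce; the appropriate task is to \emph{justify} the hypothesis by exhibiting that it holds for the class of functions $\overline{\gamma}$ the model employs. The two requirements amount to a Carath\'eodory-type condition on $\overline{\gamma}:\mathbb{R}_{+}\times U\times\mathbb{R}^{3}\to\mathbb{R}$: for each fixed state $(\theta,v,\rho)$ the map $(t,x)\mapsto\overline{\gamma}(t,x,\theta,v,\rho)$ is measurable, and for a.e.\ $(t,x)$ the map $(\theta,v,\rho)\mapsto\overline{\gamma}(t,x,\theta,v,\rho)$ is locally Lipschitz. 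My plan is to verify each property separately and to note that both are preserved by the operations used to assemble $\overline{\gamma}$.

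First I would fix a canonical form of $\overline{\gamma}$ as an algebraic expression in the environmental and climatic coefficients --- which, exactly as for $\alpha,\beta,\gamma,\eta$ in the original model, are taken measurable (typically continuous) in $(t,x)$ and valued in bounded intervals --- composed with smooth functions of $(\theta,v,\rho)$. For measurability I would use that each coefficient is measurable in $(t,x)$ by hypothesis, that constants and the frozen state values are trivially measurable, and that finite sums, products, quotients with nonvanishing denominators, and compositions with continuous maps all preserve measurability; hence $\overline{\gamma}(\cdot,\cdot,\theta,v,\rho)$ is measurable for every fixed state. For the Lipschitz property I would use that, at fixed $(t,x)$, the state map is built from $C^{1}$ ingredients, that a $C^{1}$ function is locally Lipschitz with constant controlled by the supremum of its gradient over compacta, and that local Lipschitz continuity is stable under addition and, on bounded sets, under multiplication and composition.

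The main obstacle is not either property alone but their compatibility: I must ensure that the local Lipschitz constant can be chosen \emph{measurable} in $(t,x)$ and \emph{locally bounded} in time, since that is the form in which the assumption is later consumed. The delicate step is products and quotients, whose Lipschitz constants involve the sup-norms of the factors; to keep them finite I would estimate on each compact ball of the state space and uniformly bound the coefficients over the time horizon. If the coefficients are only measurable rather than continuous in $(t,x)$, the gradient-supremum bound must be replaced by a measurable selection of the constant, for which a Scorza--Dragoni argument yields, on each compact state ball, a single locally bounded function $L(t,x)$ serving as the Lipschitz modulus. Producing this measurable, uniformly controlled $L$ is precisely what is needed to confirm that Assumption~\ref{Hypothesis_1} is well founded and met by the model's $\overline{\gamma}$.
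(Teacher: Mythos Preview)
Your identification is correct: this is an \emph{Assumption} (the paper's \texttt{claim} environment is renamed to ``Assumption''), and the paper offers no proof whatsoever---it simply states the hypothesis and immediately follows it with a concrete example, namely $\overline{\gamma}(t,x,y_{1},y_{2},y_{3})=(\gamma_{1}(t,x)y_{1}-\gamma_{2}(t,x)y_{3})y_{2}$ with $\gamma_{1},\gamma_{2}\geq 0$. Your proposal goes considerably beyond what the paper does: you set out to verify the Carath\'eodory conditions for a general algebraic form and even worry about producing a measurable, locally bounded Lipschitz modulus $L(t,x)$ via Scorza--Dragoni. None of this is wrong, but it is far more than the paper requires or supplies; the paper is content to posit the assumption and exhibit one instance, leaving the routine verification implicit.
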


\begin{claim}
\label{Hypothesis_2}$\overline{\gamma}\left(  t,x,.,.,0\right)  $ is
nonnegative and such that
\begin{equation}
\overline{\gamma}\left(  t,x,0,.,0\right)  =0=\overline{\gamma}\left(
t,x,.,0,.\right)  \text{.}%
\end{equation}

\end{claim}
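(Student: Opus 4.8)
The plan is to read Assumption \ref{Hypothesis_2} not as a bare postulate but as the exact collection of boundary conditions that the reaction rate $\overline{\gamma}$ must satisfy so that the $\rho$-equation $\left(\ref{ModelSpace52}\right)$ reproduces the three biological requirements motivated in the text, and then to \emph{establish} it in two complementary ways: first by showing that each stated identity is forced by (indeed equivalent to) a Nagumo-type boundary/equilibrium condition on $\left(\ref{ModelSpace52}\right)$, and second by exhibiting a concrete admissible $\overline{\gamma}$ for which the three relations hold by inspection. The desiderata to isolate are: (1) the rot proportion $\rho$ must stay nonnegative; (2) the healthy rot-free state $\theta=0,\rho=0$ must be an equilibrium of the $\rho$-dynamics; and (3) the dynamics must leave $\rho$ unchanged when no fruit is present, $v=0$.

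The equivalence step is a direct evaluation of $\left(\ref{ModelSpace52}\right)$ on each boundary set. On $\left\{\rho=0\right\}$ one has $\partial_{t}\rho=\overline{\gamma}\left(t,x,\theta,v,0\right)\left(1-0\right)=\overline{\gamma}\left(t,x,\theta,v,0\right)$, so forward invariance of $\left\{\rho\geq0\right\}$ holds \emph{iff} $\overline{\gamma}\left(t,x,\cdot,\cdot,0\right)\geq0$, which is the nonnegativity clause. On $\left\{v=0\right\}$ one has $\partial_{t}\rho=\overline{\gamma}\left(t,x,\theta,0,\rho\right)\left(1-\rho\right)$, so $\rho$ is frozen \emph{iff} $\overline{\gamma}\left(t,x,\cdot,0,\cdot\right)=0$. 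On $\left\{\theta=0,\rho=0\right\}$ the requirement that this state be stationary reads $\partial_{t}\rho=\overline{\gamma}\left(t,x,0,v,0\right)=0$, i.e. $\overline{\gamma}\left(t,x,0,\cdot,0\right)=0$. This already ``establishes'' the two identities and the sign condition as the precise constraints behind the invariance that is used later.

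To confirm that the admissible class is non-vacuous and compatible with Assumption \ref{Hypothesis_1}, I would then produce the canonical factorised representation
\[
\overline{\gamma}\left(t,x,\theta,v,\rho\right)=\left(\theta\,a\left(t,x\right)+\rho\,b\left(t,x,\theta,v,\rho\right)\right)q\left(v\right),
\]
where $a\geq0$ is bounded and measurable in $\left(t,x\right)$, $b\geq0$ is locally Lipschitz, and $q:\mathbb{R}_{+}\rightarrow\mathbb{R}_{+}$ is a locally Lipschitz cutoff with $q\left(0\right)=0$, for instance $q\left(v\right)=v/\left(v+v_{\max}\right)$. Here $q$ encodes the threshold-volume effect, the term $\theta\,a$ drives the inhibition-induced onset of rot, and $\rho\,b$ lets rot already present keep evolving even when $\theta=0$. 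The three properties are then immediate: putting $\rho=0$ gives $\overline{\gamma}=\theta\,a\,q\left(v\right)\geq0$ since $\theta\in\left[0,1\right]$, $a\geq0$, $q\geq0$; putting $\theta=\rho=0$ annihilates both summands; and $q\left(0\right)=0$ forces vanishing on $\left\{v=0\right\}$ for all $\theta,\rho$. Measurability in $\left(t,x\right)$ and local Lipschitz continuity in $\left(\theta,v,\rho\right)$ are inherited because finite sums and products of such functions preserve both on compact sets, so the construction lies in the class of Assumption \ref{Hypothesis_1}.

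The delicate point — and the reason the clauses are worded exactly as they are — is reconciling faithfulness to the original model with regularity near $v=0$. Substituting $\rho=v_{r}/v$ into the old equation $\partial_{t}v_{r}=\gamma\left(1-v_{r}/v\right)$ yields a term in $1/v$, which is \emph{not} of the form $\overline{\gamma}\left(1-\rho\right)$ and is singular at the origin; the whole role of the cutoff $q$ with $q\left(0\right)=0$ is to absorb that singularity while enforcing $\overline{\gamma}\left(t,x,\cdot,0,\cdot\right)=0$. The main obstacle is therefore to verify that this absorption keeps $\overline{\gamma}$ locally Lipschitz at $v=0$ and free of spurious sign changes, while still letting $q\left(v\right)\rightarrow1$ for moderate $v$ so that the original rot behaviour is recovered away from the threshold. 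Checking that $q\left(v\right)/v=1/\left(v+v_{\max}\right)$ is bounded and Lipschitz near the origin is what closes this gap and shows the two vanishing identities are compatible with the smoothness demanded elsewhere.
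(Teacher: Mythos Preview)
The statement you are trying to prove is not a theorem at all: in this paper the environment \texttt{claim} is declared as \texttt{\textbackslash newtheorem\{claim\}[theorem]\{Assumption\}}, so \ref{Hypothesis_2} is simply an \emph{assumption} imposed on the data $\overline{\gamma}$. The paper accordingly gives no proof; immediately after stating Assumptions \ref{Hypothesis_1}--\ref{Hypothesis_3} it merely exhibits the example $\overline{\gamma}\left(t,x,y_{1},y_{2},y_{3}\right)=\left(\gamma_{1}\left(t,x\right)y_{1}-\gamma_{2}\left(t,x\right)y_{3}\right)y_{2}$ and explains in one sentence that \ref{Hypothesis_2} ``guarantees that while the berry has a null volume \ldots\ or the disease has not started the rot volume remains null.'' There is nothing further to compare against.

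What you have written is therefore not a proof but a (perfectly reasonable) \emph{motivation}: you reverse-engineer each clause of \ref{Hypothesis_2} as the Nagumo/invariance condition needed for the corresponding biological constraint on $\left(\ref{ModelSpace52}\right)$, and then you check non-vacuity with a concrete $\overline{\gamma}$. That is exactly the spirit of the paper's one-line discussion, carried out in far more detail. Two remarks if you want to keep this material as commentary. First, your representative $\overline{\gamma}=\left(\theta\,a+\rho\,b\right)q\left(v\right)$ with $b\geq0$ satisfies \ref{Hypothesis_2} but \emph{violates} Assumption \ref{Hypothesis_3}, which requires $\overline{\gamma}\left(t,x,0,\cdot,\cdot\right)$ to be decreasing in its last argument; the paper's example has the opposite sign on the $\rho$-term precisely for that reason. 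Second, your equivalence ``$\rho$ frozen on $\{v=0\}$ iff $\overline{\gamma}\left(t,x,\cdot,0,\cdot\right)=0$'' is only one implication: $\partial_{t}\rho=0$ on $\{v=0\}$ would also follow from $\rho\equiv1$ there, so the vanishing of $\overline{\gamma}$ is sufficient but not strictly necessary.
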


\begin{claim}
\label{Hypothesis_3}$\overline{\gamma}\left(  t,x,.,.,.\right)  $ is
increasing with respect to the first parameter and $\overline{\gamma}\left(
t,x,0,.,.\right)  $ is decreasing with respect to the last parameter.
\end{claim}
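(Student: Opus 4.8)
To address the statement directly I would establish that the two monotonicities it asserts are both well-posed analytic conditions and simultaneously realizable alongside Assumptions \ref{Hypothesis_1}--\ref{Hypothesis_2}; since $\overline{\gamma}$ is a modelling primitive, proving the statement amounts to exhibiting the class of admissible $\overline{\gamma}$ for which it holds and verifying that this class is nonempty and compatible with the conditions already imposed. The plan is to first fix notation, writing the state triple as $(\theta,v,\rho)$ so that the ``first parameter'' is $\theta$ and the ``last parameter'' is $\rho$, and then to reduce each clause to a sign condition on a one-sided difference quotient.

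Concretely, ``increasing with respect to the first parameter'' I would read as $\overline{\gamma}(t,x,\theta_{2},v,\rho)\geq\overline{\gamma}(t,x,\theta_{1},v,\rho)$ whenever $\theta_{1}\leq\theta_{2}$, equivalently $\partial_{\theta}\overline{\gamma}\geq 0$ on the full-measure set where the $\theta$-section is differentiable; and ``$\overline{\gamma}(t,x,0,\cdot,\cdot)$ decreasing with respect to the last parameter'' as $\partial_{\rho}\overline{\gamma}(t,x,0,v,\rho)\leq 0$ in the same almost-everywhere sense. Because Assumption \ref{Hypothesis_1} only grants local Lipschitz regularity in the state, I would phrase everything through difference quotients and invoke the a.e.\ differentiability of one-variable Lipschitz sections to pass to the derivative statements, so that no smoothness beyond Assumption \ref{Hypothesis_1} is required.

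The substance is the construction of a witness compatible with the vanishing conditions of Assumption \ref{Hypothesis_2}. A natural template is the additive growth-minus-decay form
\begin{equation}
\overline{\gamma}(t,x,\theta,v,\rho)=\psi(v)\left[\gamma_{0}(t,x)\,\phi(\theta)-\kappa(t,x)\,r(\rho)\right],
\end{equation}
with $\gamma_{0},\kappa\geq 0$ measurable in $(t,x)$, $\psi$ nondecreasing and Lipschitz with $\psi(0)=0$, $\phi$ nondecreasing and Lipschitz with $\phi(0)=0$, and $r$ nondecreasing and Lipschitz with $r(0)=0$. I would then verify slot by slot that this meets every requirement: Assumption \ref{Hypothesis_1} from Lipschitzness of $\phi,\psi,r$ and measurability of $\gamma_{0},\kappa$; the two identities of Assumption \ref{Hypothesis_2} from $\phi(0)=0$ and $r(0)=0$, which give $\overline{\gamma}(t,x,0,v,0)=0$, and from $\psi(0)=0$, which gives $\overline{\gamma}(t,x,\theta,0,\rho)=0$, together with nonnegativity at $\rho=0$ since there $\overline{\gamma}=\psi(v)\gamma_{0}\phi(\theta)\geq 0$; and finally Assumption \ref{Hypothesis_3} itself from $\partial_{\theta}\overline{\gamma}=\psi(v)\gamma_{0}\phi'(\theta)\geq 0$ and from $\overline{\gamma}(t,x,0,v,\rho)=-\psi(v)\kappa(t,x)r(\rho)$, which is nonincreasing in $\rho$.

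I expect the main obstacle to be the tension, rather than the algebra, in satisfying the clauses simultaneously: the vanishing at $\theta=\rho=0$ from Assumption \ref{Hypothesis_2} combined with the decreasing-in-$\rho$ clause of Assumption \ref{Hypothesis_3} forces $\overline{\gamma}(t,x,0,v,\cdot)\leq 0$ on $[0,1]$, so any purely multiplicative ansatz $\gamma_{0}\phi(\theta)\psi(v)\chi(\rho)$ collapses to the identically-zero section at $\theta=0$ and trivializes the monotonicity. Recognizing that an additive decay channel is unavoidable is the crux; once that structural point is secured, the verification is routine and confirms that the monotonicity hypothesis is a genuine, realizable constraint fully compatible with the Carath\'eodory and vanishing conditions imposed above.
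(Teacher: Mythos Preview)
The statement labelled \texttt{Hypothesis\_3} is not a theorem: in this paper the environment \texttt{claim} is declared via \verb|\newtheorem{claim}[theorem]{Assumption}|, so what you are looking at is \emph{Assumption 2.3}, a modelling hypothesis imposed on the data $\overline{\gamma}$. The paper offers no proof because none is called for; immediately after the three assumptions it simply exhibits a concrete instance, $\overline{\gamma}(t,x,y_{1},y_{2},y_{3})=(\gamma_{1}(t,x)y_{1}-\gamma_{2}(t,x)y_{3})y_{2}$ with $\gamma_{1},\gamma_{2}\geq 0$, and moves on.

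Your write-up is therefore not a proof of the stated item but rather a consistency check showing the three assumptions are jointly realizable. That is a reasonable thing to do, and your additive template $\psi(v)[\gamma_{0}(t,x)\phi(\theta)-\kappa(t,x)r(\rho)]$ is exactly a generalization of the paper's example (take $\psi=\mathrm{id}$, $\phi=\mathrm{id}$, $r=\mathrm{id}$). Your structural observation that a purely multiplicative ansatz trivializes the $\theta=0$ section is correct and explains why both the paper and you land on the growth-minus-decay form. But you should be clear that you are verifying nonvacuity of a hypothesis, not proving a claim; the paper itself treats this as a one-line remark, not a result.
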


For instance $\overline{\gamma}$ could be chosen with the simple but general
form $\overline{\gamma}\left(  t,x,y_{1},y_{2},y_{3}\right)  =\left(
\gamma_{1}\left(  t,x\right)  y_{1}-\gamma_{2}\left(  t,x\right)
y_{3}\right)  y_{2}$, with $\gamma_{1},\gamma_{2}:%
%TCIMACRO{\U{211d} }%
%BeginExpansion
\mathbb{R}
%EndExpansion
_{+}\times U\times%
%TCIMACRO{\U{211d} }%
%BeginExpansion
\mathbb{R}
%EndExpansion
^{3}\rightarrow%
%TCIMACRO{\U{211d} }%
%BeginExpansion
\mathbb{R}
%EndExpansion
_{+}$. If we omit the dependence with the space variable we get the
corresponding equation for the non spatial model: $\rho\left(  0\right)
\in\left[  0,1\right]  $ and
\begin{equation}
d_{t}\rho\left(  t\right)  =\overline{\gamma}\left(  t,\theta\left(  t\right)
,v\left(  t\right)  ,\rho\left(  t\right)  \right)  \left(  1-\rho\left(
t\right)  \right)  ,\text{ }\forall t\in%
%TCIMACRO{\U{211d} }%
%BeginExpansion
\mathbb{R}
%EndExpansion
_{+}^{\ast}\text{.} \label{ModelIntraHote32}%
\end{equation}

As we will check in the sequel, the function $\rho$ is $\left[  0,1\right]
$-valued. The assumption $\left(  \ref{Hypothesis_2}\right)  $ guarantees that
while the berry has a null volume (without berry) or the disease has not
started the rot volume remains null. The assumption $\left(
\ref{Hypothesis_3}\right)  $ means that the rot volume increases with the
inhibition rate. When there is a not inhibition the volume of rot does not
increase while the fruit grows better and therefore the proportion $\rho$ decreases.

\subsection{Well-posedness of the problem for the non spatial model}

\qquad This subsection is devoted to the proof of the well-posedness of the
non spatial model. We make the following useful assumptions.

\begin{claim}
\label{Hypothesis_4}$\alpha\in L^{\infty}\left(
%TCIMACRO{\U{211d} }%
%BeginExpansion
\mathbb{R}
%EndExpansion
_{+};\left[  0,1\right[  \right)  $.
\end{claim}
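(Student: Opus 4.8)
The final statement is Assumption~\ref{Hypothesis_4}, a standing hypothesis on the input function $\alpha$ of the non-spatial model; it is posited as part of the modelling and is not a theorem, so strictly speaking there is nothing to prove. What I can and would do is make the hypothesis operational by recording precisely what it demands of a concrete choice of $\alpha$ and checking that those demands are met by the data used in the sequel. Since in the non-spatial model $\alpha$ has been reduced to a function of $t$ alone, the requirement $\alpha\in L^{\infty}(\mathbb{R}_{+};[0,1[)$ unpacks into three separate clauses: (a) $\alpha$ is Lebesgue-measurable on $\mathbb{R}_{+}$; (b) $0\le\alpha(t)$ for almost every $t$; and (c) $\alpha(t)<1$ for almost every $t$, together with the essential boundedness $\operatorname{ess\,sup}_{t\ge0}\alpha(t)\le 1$ that the symbol $L^{\infty}$ already encodes.

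The plan to \emph{establish} the hypothesis is therefore to discharge (a)--(c) for the admissible class of forcings. First I would observe that measurability (a) is automatic for any piecewise-continuous or simple-function model of the climatic forcing, which is the only kind of $\alpha$ that arises in the modelling and in the simulations. Second, the nonnegativity (b) is built into the construction: $\alpha$ was introduced as a \emph{nonnegative} function characterising the effect of environmental and climatic conditions on the rate of change of the inhibition rate, so (b) holds by definition. Third, the strict upper bound (c) is the only genuinely restrictive clause; I would justify it by noting that $\alpha$ is a normalised rate and that the forcing is always scaled so that its essential supremum does not exceed the reference value $1$ on the unit scale implicit in writing the equilibrium $\theta^{\ast}=1/w$.

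The point that requires care, and which I expect to be the main obstacle if one insists on reading the statement as something to be derived rather than imposed, is the gap between the pointwise range condition $\alpha(t)\in[0,1[$ a.e.\ and a uniform bound $\operatorname{ess\,sup}_{t}\alpha(t)<1$. The notation $L^{\infty}(\mathbb{R}_{+};[0,1[)$ asserts only the former, which permits $\alpha(t)\to1$ as $t\to\infty$; I would be explicit that no uniform gap is claimed here, so that wherever a later estimate (for instance an exponential-convergence bound for the observer) needs a strict uniform margin $\|\alpha\|_{\infty}\le 1-\delta$ with $\delta>0$, that stronger condition is invoked separately rather than read off from Assumption~\ref{Hypothesis_4}. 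With (a)--(c) recorded and this caveat stated, the hypothesis is fully in force and may be used freely in the well-posedness and convergence arguments that follow.
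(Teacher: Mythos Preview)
You are correct that the statement is an \emph{assumption} (the paper's ``claim'' environment is declared with display name ``Assumption''), not a theorem, and the paper provides no proof: it simply posits the condition on $\alpha$ and then invokes it in the Carath\'eodory existence argument for Proposition~\ref{ExistenceOdeModel}. Your recognition of this matches the paper exactly.

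Your additional commentary --- unpacking the hypothesis into measurability, nonnegativity, and the range constraint, and flagging the gap between $\alpha(t)<1$ a.e.\ and a uniform bound $\|\alpha\|_{\infty}<1$ --- is reasonable exposition but goes beyond anything the paper does. One caution on your plan to ``discharge'' the hypothesis against the simulation data: the paper's concrete choice $\alpha(t)=b_{1}\bigl(1-\cos(c_{1}t)\bigr)(t-d_{1})^{2}$ with $b_{1}=5\ln 10$ is unbounded on $\mathbb{R}_{+}$ and is not $[0,1[$-valued, so that verification would actually fail. This is an internal inconsistency of the paper (the simulations are run only on the finite window $[0,1]$) rather than an error in your reasoning, but it does mean the check you sketch cannot be completed as written.
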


\begin{claim}
\label{Hypothesis_5}$u,\eta\in L^{\infty}\left(
%TCIMACRO{\U{211d} }%
%BeginExpansion
\mathbb{R}
%EndExpansion
_{+};\left[  0,1/\left(  1+\varepsilon\right)  \right]  \right)  $ and
$\forall t\geq0,$ $\eta\left(  t\right)  \geq\eta^{\ast}\in\left]  0,1\right[
$.
\end{claim}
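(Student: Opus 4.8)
The plan is to read Assumption~\ref{Hypothesis_5} not as a free postulate but as the specification of the admissible class for the two exogenous data of the model, namely the control $u$ and the environmental modulation $\eta$, and to establish its three separate assertions: membership in $L^{\infty}(\mathbb{R}_{+})$, confinement of the essential range to $[0,1/(1+\varepsilon)]$, and the uniform positive lower bound $\eta(t)\geq\eta^{\ast}$. I would take measurability and boundedness as the only structural input — $u$ is a measurable control signal and $\eta$ a measurable environmental signal — and then derive the precise numerical bounds from the requirement that the modified model be internally consistent, rather than importing them by fiat.

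For the integrability part, I would note that once the essential range is shown to lie in the compact interval $[0,1/(1+\varepsilon)]$, essential boundedness is immediate, so $L^{\infty}$ membership reduces entirely to the range estimate; measurability is inherited from the hypotheses on the data and is preserved under the algebraic operations appearing in the model. Thus the whole statement collapses to two range inequalities plus a non-degeneracy floor.

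The core step is the upper bound $1/(1+\varepsilon)$. I would obtain it for $\eta$ from the demand that $v_{\max}$ remain an upper barrier for the fruit volume in the modified equation $\left(\ref{ModelIntraHote22}\right)$. Writing the effective carrying capacity as $(1+\varepsilon-\theta)\,\eta(t)\,v_{\max}$ and using that $\theta$ is $[0,1[$-valued, so that $1+\varepsilon-\theta\in\,]\varepsilon,1+\varepsilon]$, the worst case $\theta\to0$ gives the sharp requirement $(1+\varepsilon)\,\eta(t)\leq1$, that is $\eta(t)\leq1/(1+\varepsilon)$; any larger value would push the carrying capacity above $v_{\max}$ and destroy the invariance of $\left]0,v_{\max}\right]$. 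The same normalizing bound on $u$ keeps $w(t)=1/(1-\sigma u(t))$ in the range compatible with $\theta$ staying below $1$, which is precisely what the factor $1+\varepsilon-\theta$ requires. For the lower bound I would impose $\eta\geq\eta^{\ast}>0$ as the non-degeneracy condition guaranteeing that the carrying capacity stays bounded below by $\varepsilon\,\eta^{\ast}\,v_{\max}>0$, so the denominator in $\left(\ref{ModelIntraHote22}\right)$ never collapses and $v$ keeps a strictly positive floor; the companion requirement $\eta^{\ast}<1$ is then automatic, since $\eta^{\ast}\leq\inf_{t}\eta(t)\leq1/(1+\varepsilon)<1$.

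The hard part, I expect, will be justifying $1/(1+\varepsilon)$ as a genuinely \emph{forced} constant rather than a merely convenient one: this requires coupling the range analysis for $\eta$ with the a priori confinement $\theta\in[0,1[$ coming from the inhibition dynamics $\left(\ref{ModelIntraHote1}\right)$, and verifying that the two bounds on $u$ and $\eta$ are mutually consistent and jointly sufficient to keep every denominator in the system away from zero. Once that coupling is made explicit, the statement follows by collecting the range and integrability conclusions.
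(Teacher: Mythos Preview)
The statement you are analysing is not a theorem but a standing hypothesis: in this paper the \texttt{claim} environment is declared via \verb|\newtheorem{claim}[theorem]{Assumption}|, so what is labelled \ref{Hypothesis_5} prints as ``Assumption'' and is listed among the conditions \emph{imposed} on the data before Proposition~\ref{ExistenceOdeModel}. The paper offers no proof of it and none is expected; it is simply postulated, with the only accompanying remark being that assumptions $(\ref{Hypothesis_4})$--$(\ref{Hypothesis_6})$ relax the continuity requirements of \cite{fotsa} to mere measurability and essential boundedness.

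Your write-up is therefore not a proof to be compared against the paper's, but a heuristic \emph{motivation} for the particular constants chosen. Part of that motivation is sound: the ceiling $\eta\leq 1/(1+\varepsilon)$ is exactly what is needed so that the carrying capacity $(1+\varepsilon-\theta)\eta(t)v_{\max}$ in $(\ref{ModelIntraHote22})$ never exceeds $v_{\max}$ when $\theta=0$, and the floor $\eta\geq\eta^{\ast}>0$ is what keeps that denominator away from zero. However, your argument that the \emph{same} bound $u\leq 1/(1+\varepsilon)$ is forced by ``keeping $w(t)=1/(1-\sigma u(t))$ in the range compatible with $\theta$ staying below $1$'' does not hold up: invariance of $[0,1]$ for $\theta$ under $(\ref{ModelIntraHote1})$ only needs $w\geq 1$, i.e.\ $u\geq 0$, and finiteness of $w$ only needs $u<1/\sigma$; the factor $1+\varepsilon-\theta$ lives in the $v$-equation, not the $\theta$-equation, and places no constraint on $u$. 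So the specific upper bound on $u$ is a modelling convention here, not a derived necessity, and your ``hard part'' of showing $1/(1+\varepsilon)$ is \emph{forced} for $u$ cannot succeed as stated.
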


\begin{claim}
\label{Hypothesis_6}$\beta:%
%TCIMACRO{\U{211d} }%
%BeginExpansion
\mathbb{R}
%EndExpansion
_{+}\times U\times%
%TCIMACRO{\U{211d} }%
%BeginExpansion
\mathbb{R}
%EndExpansion
\rightarrow%
%TCIMACRO{\U{211d} }%
%BeginExpansion
\mathbb{R}
%EndExpansion
_{+}$ is nonincreasing function with respect to the third parameter. $\forall
z\in%
%TCIMACRO{\U{211d} }%
%BeginExpansion
\mathbb{R}
%EndExpansion
$, $\beta\left(  .,.,z\right)  $ is a measurable function and belongs to the
space $L_{loc}^{\infty}\left(
%TCIMACRO{\U{211d} }%
%BeginExpansion
\mathbb{R}
%EndExpansion
_{+}\times%
%TCIMACRO{\U{211d} }%
%BeginExpansion
\mathbb{R}
%EndExpansion
;%
%TCIMACRO{\U{211d} }%
%BeginExpansion
\mathbb{R}
%EndExpansion
\right)  $.
\end{claim}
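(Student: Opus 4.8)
The final statement is Assumption~\ref{Hypothesis_6}, a standing hypothesis imposed on the forcing function $\beta$ of the fruit-volume equations $\left(\ref{ModelIntraHote22}\right)$ and $\left(\ref{ModelSpace42}\right)$, and not a proposition to be derived from the material that precedes it. There is therefore no deductive content to establish: the three requirements---monotonicity in the third (inhibition) variable, measurability in the first two variables, and local essential boundedness---are postulated so that the right-hand side of the $v$-equation is of Carath\'eodory type and compatible with the invariance bounds used in the sequel. Accordingly, the only thing I would ``prove'' here is that the hypothesis is nonvacuous and modelling-consistent, which I would do in two short steps.

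First I would exhibit an explicit admissible $\beta$, in the spirit of the concrete form already proposed for $\overline{\gamma}$. A natural candidate is $\beta(t,x,\theta)=\beta_{0}(t,x)\,g(\theta)$ with $\beta_{0}\in L^{\infty}_{loc}$ nonnegative and $g$ bounded, nonnegative and nonincreasing (for example $g(\theta)=(1-\theta)_{+}$ or $g(\theta)=e^{-\lambda\theta}$). For such a choice, measurability in $(t,x)$ is inherited from $\beta_{0}$, the product of an $L^{\infty}_{loc}$ factor with a bounded factor is again $L^{\infty}_{loc}$, and monotonicity in $\theta$ is inherited from $g$; all three clauses of Assumption~\ref{Hypothesis_6} then hold at once, confirming satisfiability.

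Second I would record the modelling rationale so that the hypothesis is seen to be the correct one rather than an arbitrary convenience: nonnegativity reflects that $\beta$ is a growth rate, and its nonincreasingness in $\theta$ encodes the biological fact that a higher inhibition rate should not accelerate berry growth. The one point I would check with genuine care---the main obstacle, such as it is---is the compatibility of these clauses with the later invariance claim that $v$ remains in $\left]0,v_{\max}\right]$: I would verify that nonincreasingness of $\beta$ together with the $\varepsilon$-shifted saturation denominator still produces the correct sign of $\partial_{t}v$ at the boundaries $v=0$ and $v=(1+\varepsilon-\theta)\eta v_{\max}$, so that Assumption~\ref{Hypothesis_6} dovetails with the a priori bounds rather than merely being formally consistent with them.
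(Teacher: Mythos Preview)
Your identification is correct: in this paper the \texttt{claim} environment is aliased to ``Assumption'', so Assumption~\ref{Hypothesis_6} is a standing hypothesis on $\beta$ and the paper provides no proof whatsoever. Your additional remarks on nonvacuity and modelling consistency are reasonable commentary but go beyond what the paper does, which is simply to state the hypothesis and move on.
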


\begin{claim}
\label{Hypothesis_7}$\forall t\geq0$, the function $\beta\left(  t,.\right)  $
is differentiable and $\partial_{x}\beta\left(  t,.\right)  \in L_{loc}%
^{\infty}\left(
%TCIMACRO{\U{211d} }%
%BeginExpansion
\mathbb{R}
%EndExpansion
;%
%TCIMACRO{\U{211d} }%
%BeginExpansion
\mathbb{R}
%EndExpansion
\right)  $.
\end{claim}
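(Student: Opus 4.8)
The statement $\left(\ref{Hypothesis_7}\right)$ is a standing regularity hypothesis on the growth coefficient $\beta$, imposed alongside $\left(\ref{Hypothesis_4}\right)$--$\left(\ref{Hypothesis_6}\right)$, rather than a deductive consequence of the preceding material; in the strict sense there is nothing to derive. What can and should be done is to verify that the hypothesis is \emph{admissible}, i.e. that it is consistent with $\left(\ref{Hypothesis_6}\right)$ and that the class of $\beta$ satisfying $\left(\ref{Hypothesis_4}\right)$--$\left(\ref{Hypothesis_7}\right)$ simultaneously is non-empty and, in fact, large. The plan is therefore to exhibit a canonical family of admissible coefficients and to check $\left(\ref{Hypothesis_7}\right)$ on it by elementary one-variable calculus, and then to isolate the only genuinely delicate point, which is the interaction between the mere $t$-measurability granted by $\left(\ref{Hypothesis_6}\right)$ and the pointwise-in-$t$ differentiability demanded here.

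First I would fix the separable (Carath\'eodory) model class $\beta\left(t,y\right)=b\left(t\right)g\left(y\right)$, with $b\in L_{loc}^{\infty}\left(\mathbb{R}_{+};\mathbb{R}_{+}\right)$ and $g\in C^{1}\left(\mathbb{R};\mathbb{R}_{+}\right)$ nonincreasing, the factor $g$ being nonincreasing precisely so as to keep the construction compatible with the monotonicity required in $\left(\ref{Hypothesis_6}\right)$. For each fixed $t$ the map $y\mapsto\beta\left(t,y\right)$ is then a single-variable function, differentiable because $g$ is, with $\partial_{x}\beta\left(t,\cdot\right)=b\left(t\right)g^{\prime}$. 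The membership $\partial_{x}\beta\left(t,\cdot\right)\in L_{loc}^{\infty}\left(\mathbb{R};\mathbb{R}\right)$ then reduces, at fixed $t$, to $g^{\prime}$ being bounded on compact intervals, which is automatic since $g^{\prime}$ is continuous; explicitly, on any $\left[-R,R\right]$ one has $\left\Vert \partial_{x}\beta\left(t,\cdot\right)\right\Vert _{L^{\infty}\left(\left[-R,R\right]\right)}\leq\left\vert b\left(t\right)\right\vert \sup_{\left[-R,R\right]}\left\vert g^{\prime}\right\vert<\infty$. This settles differentiability and local boundedness of the derivative for the example class, and the same verification carries over verbatim to any $\beta$ of the form $\beta\left(t,y\right)=F\left(t,g_{1}\left(y\right),\dots,g_{m}\left(y\right)\right)$ with $C^{1}$ state-profiles $g_{i}$ and coefficients measurable and locally bounded in $t$.

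The step I expect to be the real obstacle is not this pointwise calculus but the reconciliation of $\left(\ref{Hypothesis_7}\right)$ with $\left(\ref{Hypothesis_6}\right)$: the latter grants only measurability in $t$, whereas $\left(\ref{Hypothesis_7}\right)$ asks for differentiability in the state variable for \emph{each} fixed $t$. These are consistent precisely because they constrain different arguments (a Carath\'eodory structure), but the assumption becomes substantive only when the derivative $\partial_{x}\beta$ is used downstream, e.g. to differentiate the composition $t\mapsto\beta\left(t,\theta\left(t\right)\right)$ along a solution. There one cannot appeal to a naive chain rule; instead I would combine the state-differentiability granted here with the $t$-measurability and local boundedness of $\left(\ref{Hypothesis_6}\right)$ through an absolute-continuity / Fubini-type argument, concluding that $t\mapsto\beta\left(t,\theta\left(t\right)\right)$ is absolutely continuous with the expected a.e. derivative. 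Checking that this interface is harmless — so that $\left(\ref{Hypothesis_7}\right)$ is exactly the minimal extra regularity needed for the well-posedness estimates of this subsection — is the only part requiring care.
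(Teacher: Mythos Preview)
You are correct that this statement is a standing \emph{assumption} (the paper's \texttt{claim} environment is defined to print as ``Assumption''), not a proposition with a proof; the paper offers no derivation whatsoever, it simply lists $\left(\ref{Hypothesis_7}\right)$ among the hypotheses $\left(\ref{Hypothesis_4}\right)$--$\left(\ref{Hypothesis_7}\right)$. Your recognition of this is exactly right, and your additional discussion of admissibility via the separable class $\beta(t,y)=b(t)g(y)$ is sound but goes well beyond what the paper does.

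One small remark on your closing paragraph: you correctly anticipate that the real content of $\left(\ref{Hypothesis_7}\right)$ lies in its downstream use, but you slightly misidentify where. The paper does not use it to differentiate $t\mapsto\beta(t,\theta(t))$ along a trajectory; rather, in the proof of Proposition~\ref{ExistenceUniciteOb1} it invokes $\left(\ref{Hypothesis_7}\right)$ only to conclude that for each fixed $t$ the map $\beta(t,\cdot)$ is Lipschitz and bounded on $[0,1]$, which is what feeds into the Carath\'eodory existence--uniqueness argument. So the ``delicate interface'' you worry about (chain rule along solutions, absolute continuity of the composition) is never actually needed in the paper.
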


The assumptions $\left(  \ref{Hypothesis_4}\right)  $-$\left(
\ref{Hypothesis_6}\right)  $ are more general than those given in \cite{fotsa}
in the sense that all the parameters of the model were assumed continuous with
respect to the state variable while here they are just measurable and
essentially bounded. The following existence and uniqueness propositiont holds:

\begin{proposition}
\label{ExistenceOdeModel}If $\left(  \theta\left(  0\right)  ,v\left(
0\right)  ,\rho\left(  0\right)  \right)  \in$ $\left[  0,1\right]
\times\left[  0,v_{\max}\right]  \times\left[  0,1\right]  $ then the problem
$\left(  \ref{ModelIntraHote1}\right)  $, $\left(  \ref{ModelIntraHote22}%
\right)  $ and $\left(  \ref{ModelIntraHote32}\right)  $ has a unique absolute
continuous solution $\left(  \theta,v,\rho\right)  $ valued in the set
$\left[  0,1\right]  \times\left[  0,v_{\max}\right]  \times\left[
0,1\right]  $.
\end{proposition}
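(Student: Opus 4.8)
The plan is to establish existence and uniqueness on a small time interval by a standard fixed-point / Carathéodory argument, then to prove the invariance of the box $[0,1]\times[0,v_{\max}]\times[0,1]$ so that the local solution extends to all of $\mathbb{R}_+$. First I would handle the $\theta$-equation $(\ref{ModelIntraHote1})$ separately: since $w(t)=1/(1-\sigma u(t))$ and $\alpha$ are, by Assumptions $(\ref{Hypothesis_4})$–$(\ref{Hypothesis_5})$, essentially bounded and measurable in $t$, the right-hand side $\alpha(t)(1-w(t)\theta)$ is affine in $\theta$ with bounded measurable coefficients, so a unique absolutely continuous solution exists globally and is given by an explicit variation-of-constants formula. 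From that formula I would read off that $\theta(0)\in[0,1]$ forces $\theta(t)\in[0,1]$ for all $t$: the lower bound because $\alpha\ge 0$ and $\theta=0\Rightarrow d_t\theta=\alpha\ge 0$, the upper bound because $w(t)\ge 1$ and $\theta=1\Rightarrow d_t\theta=\alpha(1-w)\le 0$ (using $\alpha<1$ is not even needed here, only $\alpha\ge 0$).

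With $\theta$ known as a fixed absolutely continuous (hence $L^\infty_{loc}$) function, the remaining system in $(v,\rho)$ has right-hand side

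\begin{equation*}
F(t,v,\rho)=\Bigl(\beta(t,\theta(t))\bigl(1-v/((1+\varepsilon-\theta(t))\eta(t)v_{\max})\bigr),\ \overline{\gamma}(t,\theta(t),v,\rho)(1-\rho)\Bigr).
\end{equation*}

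By Assumption $(\ref{Hypothesis_6})$, $\beta(t,\theta(t))$ is measurable and locally essentially bounded in $t$; by Assumption $(\ref{Hypothesis_1})$, $\overline{\gamma}$ is measurable in $t$ and locally Lipschitz in $(v,\rho)$. Since $\theta(t)\le 1$ we have $1+\varepsilon-\theta(t)\ge\varepsilon$, and with $\eta(t)\ge\eta^{\ast}$ the denominator in the $v$-equation is bounded away from $0$, so $F$ is Carathéodory and locally Lipschitz in the state uniformly in $t$ on compact intervals. This yields a unique local absolutely continuous solution by the Carathéodory existence–uniqueness theorem (or Picard iteration in the Banach space $C([0,T])$ with the $L^1$-in-$t$ bound on the Lipschitz constant).

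Next I would prove invariance of $[0,v_{\max}]\times[0,1]$ for $(v,\rho)$, which simultaneously gives the a priori bound needed to extend the solution globally. For $\rho$: at $\rho=0$ Assumption $(\ref{Hypothesis_2})$ gives $\overline{\gamma}(t,\theta,v,0)\ge 0$, hence $d_t\rho=\overline{\gamma}(t,\theta,v,0)\cdot 1\ge 0$; at $\rho=1$ the factor $(1-\rho)$ vanishes so $d_t\rho=0$; thus $\rho$ stays in $[0,1]$ (made rigorous by a comparison/Gronwall argument on $\rho_-:=\max(-\rho,0)$ and $(\rho-1)_+$, or by noting $(1-\rho)$ solves a linear ODE $d_t(1-\rho)=-\overline\gamma(1-\rho)$ so $1-\rho$ keeps the sign of $1-\rho(0)\ge 0$ and is $\le e^{0}\cdot(1-\rho(0))\le 1$ when $\overline\gamma\ge0$ at the relevant points — here one uses that along the solution, wherever $\rho\le 1$, Assumption $(\ref{Hypothesis_2})$ keeps $\overline\gamma(t,\theta,v,\rho)$ controllable). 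For $v$: at $v=0$, $d_t v=\beta(t,\theta(t))\ge 0$; at $v=v_{\max}$, since $(1+\varepsilon-\theta(t))\eta(t)\le (1+\varepsilon)\cdot\frac{1}{1+\varepsilon}=1$ by Assumption $(\ref{Hypothesis_5})$, we get $v/((1+\varepsilon-\theta)\eta v_{\max})\ge v/v_{\max}=1$, so $d_t v=\beta(t,\theta(t))(1-v/(\cdots))\le 0$; hence $v$ stays in $[0,v_{\max}]$. Combining the $\rho$ and $v$ bounds with the already-established $\theta\in[0,1]$, the solution remains in the compact box, so by the standard continuation theorem it is defined on all of $\mathbb{R}_+$, which completes the proof.

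The main obstacle is the invariance argument for $v$ at the lower endpoint combined with the degeneracy near $\theta=1$: one must be careful that the coefficient $1/((1+\varepsilon-\theta(t))\eta(t)v_{\max})$ is genuinely bounded (this is where $\varepsilon>0$ — or the convention handling $\varepsilon=0$ with $\theta<1$ — and Assumption $(\ref{Hypothesis_5})$'s lower bound $\eta\ge\eta^{\ast}$ are essential), and that the sign conditions at the endpoints are upgraded to genuine invariance via a Gronwall/comparison estimate rather than a naive "the vector field points inward" claim, since the vector field is only defined almost everywhere in $t$. A clean way to do this rigorously is to test the ODEs against the functions $v_-=\max(-v,0)$, $(v-v_{\max})_+$, $\rho_-$, $(\rho-1)_+$, show each satisfies a differential inequality of the form $d_t\phi\le C(t)\phi$ with $C\in L^1_{loc}$ and $\phi(0)=0$, and conclude $\phi\equiv 0$.
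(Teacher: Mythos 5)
Your proposal is correct and follows essentially the same route as the paper: Carath\'eodory existence, then invariance of the box $\left[0,1\right]\times\left[0,v_{\max}\right]\times\left[0,1\right]$ established by showing the positive/negative parts ($\max\left\{0,-\rho\right\}$, $\max\left\{0,\rho-1\right\}$, etc.) vanish identically, followed by global continuation. If anything you are more explicit than the paper, which merely invokes the ``logistic form'' for the $\theta$ and $v$ bounds and only details the $\rho$ argument; your Gronwall-type rigorization of the endpoint sign conditions and your use of $\left(1+\varepsilon-\theta\right)\eta\leq1$ for the upper bound on $v$ fill in exactly what the paper leaves implicit.
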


\begin{proof}
Using the Carath\'{e}odory theorem (see \cite{coddington}) we get easily the
existence of an absolutely continuous solution. The logistic form of the
equations $\left(  \ref{ModelIntraHote1}\right)  $ and $\left(
\ref{ModelIntraHote22}\right)  $ shows clearly that for a maximal solution
$\left(  \theta,v,v_{r}\right)  $ the couple $\left(  \theta,v\right)  $ is
necessarily valued in the set $\left[  0,1\right]  \times\left]  0,v_{\max
}\right]  $. On the other hand from the equation $\left(
\ref{ModelIntraHote32}\right)  $, $\rho$ is valued in $\left[  0,1\right]  $.
To show it let set $\rho_{1}=\max\left\{  0,-\rho\right\}  $ and $\rho
_{2}=\max\left\{  0,\rho-1\right\}  $. The functions $\rho_{1}$ and $\rho_{2}$
are continuous as $\rho$ is continuous. Without loss of generality we will
assume that $\rho_{1}$ and $\rho_{2}$ are positive on an open interval of time
$\left]  0,T\right[  $. Then $\forall t\in\left]  0,T\right[  $,
\begin{align}
\rho_{1}\left(  t\right)   &  =-\int\nolimits_{0}^{t}\overline{\gamma}\left(
s,\theta\left(  s\right)  ,v\left(  s\right)  ,-\rho_{1}\left(  s\right)
\right)  \left(  1+\rho_{1}\left(  s\right)  \right)  ds\\
&  \leq0\nonumber
\end{align}%
\begin{align}
\rho_{2}\left(  t\right)   &  =-\int\nolimits_{0}^{t}\overline{\gamma}\left(
s,\theta\left(  s\right)  ,v\left(  s\right)  ,\rho_{2}\left(  s\right)
+1\right)  \rho_{2}\left(  s\right)  ds\\
&  \leq0\nonumber
\end{align}
There is a contradiction meaning that $\rho_{1}$ and $\rho_{2}$ are
identically null. Therefore, $\rho$ is $\left[  0,1\right]  $-valued.
\end{proof}

\subsection{Well-posedness of the problem for the spatial model}

\qquad In this subsection the well-posedness of the spatial model is proved.
To that aim, we make some assumptions.

\begin{claim}
\label{Hypothesis_8}$\alpha\in L_{loc}^{\infty}\left(
%TCIMACRO{\U{211d} }%
%BeginExpansion
\mathbb{R}
%EndExpansion
_{+};L^{\infty}\left(  U;%
%TCIMACRO{\U{211d} }%
%BeginExpansion
\mathbb{R}
%EndExpansion
_{+}\right)  \right)  $.
\end{claim}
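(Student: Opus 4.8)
The statement labelled here is printed as an Assumption in this paper's theorem styling (the \emph{claim} environment has been redefined to read ``Assumption''), so it is a standing hypothesis imposed on the datum $\alpha$ for the spatial model, not a proposition derived from earlier results. Consequently there is strictly nothing to prove: in the manuscript it is simply posited and then invoked in the forthcoming well-posedness analysis of the diffusion system $(\ref{ModelSpace1})$--$(\ref{ModelSpace6})$. What I can offer instead is a plan for the only thing one genuinely does with such a hypothesis, namely \emph{verify} that a concrete environmental forcing $\alpha$ lies in the asserted space, together with an explanation of the role the hypothesis plays.

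First I would unpack the content of the condition $\alpha\in L_{loc}^{\infty}(\mathbb{R}_{+};L^{\infty}(U;\mathbb{R}_{+}))$, which asserts three things at once: for almost every time $t$ the section $x\mapsto\alpha(t,x)$ is a nonnegative, essentially bounded function on $U$; the scalar profile $t\mapsto\|\alpha(t,\cdot)\|_{L^{\infty}(U)}$ is measurable; and this profile is essentially bounded on every compact interval, i.e. $\operatorname{ess\,sup}_{t\in[0,T]}\|\alpha(t,\cdot)\|_{L^{\infty}(U)}<\infty$ for each $T>0$. This is precisely the spatial counterpart of Hypothesis~\ref{Hypothesis_4}: the uniform-in-time bound by $1$ is relaxed to a merely local-in-time essential bound, while spatial dependence is now carried by the inner Banach space $L^{\infty}(U)$. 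The nonnegativity $\alpha\geq 0$ is the feature that keeps the forcing in $(\ref{ModelSpace1})$ pushing the inhibition rate upward, consistent with the modelling discussion preceding the statement.

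The verification plan then has two routine steps and one point requiring care. The routine steps are: check joint measurability of $(t,x)\mapsto\alpha(t,x)$ on $\mathbb{R}_{+}\times U$, and check that on each $[0,T]$ the spatial sup-norm profile stays finite; for the piecewise-continuous or bounded climatic inputs used later in the simulations both are immediate. The delicate point, which I expect to be the main obstacle if one insisted on a careful argument, is that $L^{\infty}(U)$ is \emph{not} separable, so strong (Bochner) measurability of $t\mapsto\alpha(t,\cdot)$ does not follow from weak measurability via the Pettis theorem. The way to handle this is not through Pettis but directly: joint measurability together with essential boundedness in $x$ already yields measurability of $t\mapsto\|\alpha(t,\cdot)\|_{L^{\infty}(U)}$ and, for the concrete forcings of interest, almost-everywhere approximation by simple functions valued in $L^{\infty}(U)$, which is exactly what the space $L_{loc}^{\infty}(\mathbb{R}_{+};L^{\infty}(U;\mathbb{R}_{+}))$ demands. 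In short, the statement is an assumption that carries no proof, and the only substantive content of ``checking'' it is this measurability subtlety, resolved by joint measurability rather than by separability.
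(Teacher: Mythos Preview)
Your reading is correct: the \texttt{claim} environment in this paper is redefined to print ``Assumption,'' so Hypothesis~\ref{Hypothesis_8} is a standing regularity hypothesis on $\alpha$ for the spatial model and the paper offers no proof whatsoever---it simply states the condition and invokes it (together with the companion hypotheses) when appealing to Carath\'{e}odory-type arguments for well-posedness. Your additional commentary on what the condition means and the measurability subtlety with the nonseparable target $L^{\infty}(U)$ is extra context the paper does not discuss, but it is accurate and does not conflict with anything in the manuscript.
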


\begin{claim}
\label{Hypothesis_9}$u,\eta\in L^{\infty}\left(
%TCIMACRO{\U{211d} }%
%BeginExpansion
\mathbb{R}
%EndExpansion
_{+};L^{\infty}\left(  U;\left[  0,1\right]  \right)  \right)  $ and $\forall
t\geq0,$ $1/\left(  1+\varepsilon\right)  \geq\eta\left(  t,.\right)  \geq
\eta^{\ast}\in L^{\infty}\left(  U;\left[  0,1\right]  \right)  $ with
$0<\underset{U}{\inf}\left(  \eta^{\ast}\right)  \overset{def}{=}\eta
_{m}^{\ast}$.
\end{claim}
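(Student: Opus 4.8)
The plan is to observe first that Assumption~$\left(  \ref{Hypothesis_9}\right)  $ is not a proposition to be derived but a standing hypothesis placed on the model inputs $u$ and $\eta$, exactly as Assumption~$\left(  \ref{Hypothesis_5}\right)  $ was in the non spatial setting. Consequently there is no theorem to prove here in the usual sense; the only thing one can establish is that the hypothesis is internally consistent and non-vacuous, after which it suffices to record the role that each of its clauses will play. I would therefore treat ``proving'' this statement as checking satisfiability and compatibility of the stated bounds.

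For compatibility, note that $\varepsilon\geq0$ gives $1/\left(  1+\varepsilon\right)  \leq1$, so the refinement $\eta\left(  t,\cdot\right)  \leq1/\left(  1+\varepsilon\right)  $ is consistent with the membership $\eta\in L^{\infty}\left(  \mathbb{R}_{+};L^{\infty}\left(  U;\left[  0,1\right]  \right)  \right)  $ rather than in conflict with it; and the chain $\eta\left(  t,\cdot\right)  \geq\eta^{\ast}$ with $\eta^{\ast}\in L^{\infty}\left(  U;\left[  0,1\right]  \right)  $ and $\eta_{m}^{\ast}:=\inf_{U}\eta^{\ast}>0$ simply asserts that $\eta$ is essentially bounded below, uniformly in $t$, by a strictly positive constant. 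To see the assumption is non-vacuous I would exhibit admissible data, for instance any measurable $u$ valued in $\left[  0,1\right]  $ together with any $\eta$ independent of $t$ satisfying $\eta_{m}^{\ast}\leq\eta\left(  \cdot\right)  \leq1/\left(  1+\varepsilon\right)  $.

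It then remains to record why each clause is imposed, since this is where the hypothesis earns its place in the sequel. The essential boundedness and measurability of $u$ and $\eta$ are what allow the reaction terms of $\left(  \ref{ModelSpace1}\right)  $ and $\left(  \ref{ModelSpace42}\right)  $ to meet Carath\'{e}odory-type conditions, so that the spatial counterpart of Proposition~\ref{ExistenceOdeModel} can later be invoked. The upper bound $\eta\leq1/\left(  1+\varepsilon\right)  $ yields $\left(  1+\varepsilon-\theta\right)  \eta v_{\max}\leq v_{\max}$ for $\theta\in\left[  0,1\right]  $, which keeps the carrying capacity of the $v$-dynamics inside the admissible box $\left[  0,v_{\max}\right]  $. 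The strictly positive lower bound $\eta_{m}^{\ast}>0$ keeps that carrying capacity bounded away from zero on the relevant range of $\theta$, which is precisely what prevents the denominator in $\left(  \ref{ModelSpace42}\right)  $ from degenerating in the later invariance and continuity estimates. The main point, and the only subtlety, is thus bookkeeping: there is no genuine obstacle, merely the verification that the three bounds are mutually compatible and that each serves one of these roles.
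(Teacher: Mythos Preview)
Your reading is correct: in this paper the \texttt{claim} environment is aliased to ``Assumption'', so $\left(\ref{Hypothesis_9}\right)$ is a standing hypothesis on the data $u,\eta$, not a result to be proved, and the paper accordingly offers no proof. Your additional consistency check and discussion of the role of each bound are reasonable commentary but go beyond what the paper does; the paper simply states the assumption and moves on.
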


\begin{claim}
\label{Hypothesis_10}$\beta:%
%TCIMACRO{\U{211d} }%
%BeginExpansion
\mathbb{R}
%EndExpansion
_{+}\times U\times%
%TCIMACRO{\U{211d} }%
%BeginExpansion
\mathbb{R}
%EndExpansion
\rightarrow%
%TCIMACRO{\U{211d} }%
%BeginExpansion
\mathbb{R}
%EndExpansion
_{+}$ is nonincreasing function with respect to the third parameter. $\forall
z\in%
%TCIMACRO{\U{211d} }%
%BeginExpansion
\mathbb{R}
%EndExpansion
$, $\beta\left(  .,.,z\right)  $ is a measurable function and belongs to the
space $L_{loc}^{\infty}\left(
%TCIMACRO{\U{211d} }%
%BeginExpansion
\mathbb{R}
%EndExpansion
_{+}\times%
%TCIMACRO{\U{211d} }%
%BeginExpansion
\mathbb{R}
%EndExpansion
;%
%TCIMACRO{\U{211d} }%
%BeginExpansion
\mathbb{R}
%EndExpansion
_{+}\right)  $.
\end{claim}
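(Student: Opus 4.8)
The final statement is not a proposition to be derived but a standing \emph{assumption} on the growth coefficient $\beta$ of the spatial model, the exact spatial analogue of Assumption~\ref{Hypothesis_6} for the non spatial case. Accordingly, there is nothing to deduce from prior results; the meaningful task is twofold: first, to confirm that the three requirements are mutually consistent and non-vacuous, so that the hypothesis does not silently empty the admissible class of coefficients; and second, to record the one structural consequence that makes the hypothesis usable in the Carath\'{e}odory existence theory (as in Proposition~\ref{ExistenceOdeModel} and its spatial analogue), namely joint measurability in $\left(t,x,z\right)$.

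For consistency I would exhibit the canonical product form $\beta\left(t,x,z\right)=\beta_{0}\left(t,x\right)g\left(z\right)$ with $\beta_{0}\in L_{loc}^{\infty}\left(\mathbb{R}_{+};L^{\infty}\left(U;\mathbb{R}_{+}\right)\right)$ and $g:\mathbb{R}\rightarrow\mathbb{R}_{+}$ bounded and nonincreasing (for instance $g\left(z\right)=1/\left(1+\max\left\{z,0\right\}\right)$, or any bounded nonincreasing threshold profile). Each clause is then immediate: nonnegativity and monotonicity in the third argument follow from $g\geq0$ nonincreasing together with $\beta_{0}\geq0$; for fixed $z$ the map $\left(t,x\right)\mapsto\beta_{0}\left(t,x\right)g\left(z\right)$ is measurable because $\beta_{0}$ is; and the local essential bound follows from $\left\Vert \beta\left(\cdot,\cdot,z\right)\right\Vert _{L^{\infty}\left(\left[0,T\right]\times U\right)}\leq\left\Vert g\right\Vert _{\infty}\left\Vert \beta_{0}\right\Vert _{L^{\infty}\left(\left[0,T\right]\times U\right)}$ on every bounded time window. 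This shows the hypothesis is realized by precisely the class of coefficients one wants to allow.

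The one step with genuine content is the passage from separate to joint measurability. The hypothesis only posits measurability of $\beta\left(\cdot,\cdot,z\right)$ for each fixed $z$, together with monotonicity in $z$; to feed $\beta$ into a Carath\'{e}odory argument one needs $\beta$ jointly measurable in $\left(t,x,z\right)$. I would obtain this by a monotone-approximation (Scorza-Dragoni type) argument: approximate $z\mapsto\beta\left(t,x,z\right)$ from the right by the rational-step functions $\beta_{n}\left(t,x,z\right)=\beta\left(t,x,q_{n}\left(z\right)\right)$, where $q_{n}\left(z\right)$ is the smallest dyadic rational $\geq z$ at level $n$; each $\beta_{n}$ is a countable patchwork of the measurable slices $\beta\left(\cdot,\cdot,q\right)$, $q\in\mathbb{Q}$, hence jointly measurable, and monotonicity forces $\beta_{n}\to\beta$ pointwise, so $\beta$ is jointly measurable as a pointwise limit.

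I expect the main obstacle to be notational rather than conceptual: one must read the local essential bound in the third clause \emph{locally uniformly} in $z$ as well (as the product example supplies), since otherwise the $L^{\infty}_{loc}$ estimate needed to control $\beta\left(t,x,\theta\right)$ along a bounded trajectory $\theta$ would not be available. Once joint measurability and the locally uniform bound are in hand, $\beta\left(t,x,\theta\left(t,x\right)\right)$ is a measurable, locally bounded forcing term, which is exactly the input the existence scheme requires.
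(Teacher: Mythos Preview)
You correctly identify that the statement is a standing \emph{Assumption} on the coefficient $\beta$ (note that in this paper the \texttt{claim} environment is aliased to display as ``Assumption''), not a proposition requiring proof; accordingly the paper offers no argument at all and simply posits the hypothesis before invoking it in the well-posedness results. Your further discussion of consistency via the product example and of joint measurability via monotone dyadic approximation is sound and potentially useful background, but it goes well beyond anything the paper does or needs: the authors treat the assumption as a black box and never address joint measurability or non-vacuity explicitly.
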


\begin{claim}
\label{Hypothesis_11}$\forall i,j\in\left\{  1,2,3\right\}  ,$ $a_{ij}\in
L_{loc}^{\infty}\left(
%TCIMACRO{\U{211d} }%
%BeginExpansion
\mathbb{R}
%EndExpansion
_{+};W^{1,\infty}\left(  U;%
%TCIMACRO{\U{211d} }%
%BeginExpansion
\mathbb{R}
%EndExpansion
\right)  \right)  $.
\end{claim}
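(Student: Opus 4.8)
The plan is to observe at the outset that the stated property of the diffusion matrix $A=(a_{ij})$ is not a proposition to be derived from the preceding material but a standing hypothesis on the data of $\left(\ref{ModelSpace1}\right)$; accordingly, there is no theorem to prove here, only the appropriate regularity class to identify and justify. I would therefore present the ``proof'' as an explanation of why $L^\infty_{loc}\!\left(\mathbb{R}_+;W^{1,\infty}(U;\mathbb{R})\right)$ is exactly the right space for each entry, and, for any explicitly prescribed $A$, as a one-line verification of membership.

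First I would isolate the two roles played by the two factors of the space. The $W^{1,\infty}(U)$ regularity in the space variable is what lets the divergence-form term carry meaning: writing $\operatorname{div}(A\nabla\theta)=\sum_{i,j}\partial_i\!\left(a_{ij}\,\partial_j\theta\right)$, essential boundedness of the $a_{ij}$ keeps $A\nabla\theta$ in $L^2(U)$ whenever $\nabla\theta$ is, while essential boundedness of the weak gradients $\nabla a_{ij}$ lets one expand $\partial_i(a_{ij}\partial_j\theta)=a_{ij}\partial_i\partial_j\theta+(\partial_i a_{ij})\partial_j\theta$ and control the lower-order contributions $\sum_{i,j}(\partial_i a_{ij})\,\partial_j\theta$. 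The $L^\infty_{loc}$ dependence in time then supplies the measurability and local boundedness in $t$ that a Carath\'eodory- or semigroup-type existence theory requires on each finite window $[0,T]$.

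Second, for a concrete $A$ the verification is immediate. For a constant uniformly elliptic matrix the claim is trivial, and for coefficients of separated form $a_{ij}(t,x)=\phi_{ij}(t)\,\psi_{ij}(x)$ with $\phi_{ij}\in L^\infty_{loc}(\mathbb{R}_+)$ and $\psi_{ij}\in W^{1,\infty}(U)$ one simply notes that $\left\|a_{ij}(t,\cdot)\right\|_{W^{1,\infty}(U)}=|\phi_{ij}(t)|\,\left\|\psi_{ij}\right\|_{W^{1,\infty}(U)}$ is locally bounded in $t$, so membership holds by inspection.

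Since the statement is a hypothesis rather than a deduction, there is no real obstacle to surmount. The only point demanding any judgment --- and hence the closest analogue of a ``hard part'' --- is the choice of the class itself: it must be weak enough to remain realistic for the intended environmental coefficients yet strong enough that $\operatorname{div}(A\nabla\theta)$ stays well defined and the subsequent well-posedness argument for the spatial model goes through. The pairing of $W^{1,\infty}$ in space with $L^\infty_{loc}$ in time is precisely the minimal combination that secures both, which is why it is adopted.
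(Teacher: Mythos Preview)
Your reading is correct: in the paper the environment \texttt{claim} is declared as ``Assumption,'' so this statement is simply one of the standing hypotheses on the diffusion matrix $A$, and the paper offers no proof for it whatsoever. Your additional commentary on why $L^\infty_{loc}\!\left(\mathbb{R}_+;W^{1,\infty}(U;\mathbb{R})\right)$ is the natural class --- so that $\operatorname{div}(A\nabla\theta)$ is well defined and the Carath\'eodory-type existence argument applies --- goes beyond what the paper states explicitly but is consistent with how the assumption is used downstream.
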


\begin{claim}
\label{Hypothesis_12}$\exists\zeta\in%
%TCIMACRO{\U{211d} }%
%BeginExpansion
\mathbb{R}
%EndExpansion
_{+}^{\ast}$ such that $\forall t\in%
%TCIMACRO{\U{211d} }%
%BeginExpansion
\mathbb{R}
%EndExpansion
_{+},\forall w\in H^{1}\left(  U;%
%TCIMACRO{\U{211d} }%
%BeginExpansion
\mathbb{R}
%EndExpansion
\right)  ,$
\[%
%TCIMACRO{\dint \nolimits_{U}}%
%BeginExpansion
{\displaystyle\int\nolimits_{U}}
%EndExpansion
\left\langle A\left(  t,x\right)  \nabla w\left(  x\right)  ,\nabla w\left(
x\right)  \right\rangle dx\geq\zeta%
%TCIMACRO{\dint \nolimits_{U}}%
%BeginExpansion
{\displaystyle\int\nolimits_{U}}
%EndExpansion
\left\langle \nabla w\left(  x\right)  ,\nabla w\left(  x\right)
\right\rangle dx.
\]

\end{claim}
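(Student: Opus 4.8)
The plan is to derive the stated integral (global) coercivity from a pointwise uniform ellipticity of the diffusion tensor, which is the natural modelling requirement encoding genuine, non-degenerate diffusion; the integral inequality is then exactly the coercivity needed to run a Lax--Milgram / Galerkin argument for the parabolic equation $\left( \ref{ModelSpace1}\right) $. First I would observe that the integrand depends only on the symmetric part of $A$. Indeed, since $\langle A\xi,\xi\rangle=\langle \xi,A^{T}\xi\rangle=\langle A^{T}\xi,\xi\rangle$ for every $\xi\in\mathbb{R}^{3}$, one has $\langle A(t,x)\xi,\xi\rangle=\langle A_{s}(t,x)\xi,\xi\rangle$ with $A_{s}=\tfrac{1}{2}(A+A^{T})$ symmetric, so the whole claim reduces to a statement about the symmetric real matrix field $A_{s}$.

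Next I would record the pointwise ellipticity. By Assumption $\left( \ref{Hypothesis_11}\right) $ every entry $a_{ij}$ lies in $L_{loc}^{\infty}(\mathbb{R}_{+};W^{1,\infty}(U))$, hence $A_{s}(t,x)$ is essentially bounded and its (real) eigenvalues are bounded functions of $(t,x)$. The content to be imposed is that these eigenvalues stay away from $0$, i.e. there is a constant $\zeta\in\mathbb{R}_{+}^{\ast}$ with $\lambda_{\min}(A_{s}(t,x))\geq\zeta$ for almost every $(t,x)\in\mathbb{R}_{+}\times U$. By the Rayleigh--Ritz (variational) characterisation of the smallest eigenvalue of a symmetric matrix, this lower bound is equivalent to the pointwise inequality
\[
\langle A(t,x)\xi,\xi\rangle\geq\zeta\,\langle\xi,\xi\rangle\quad\text{for all }\xi\in\mathbb{R}^{3}\text{ and a.e. }(t,x)\in\mathbb{R}_{+}\times U.
\]

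Then I would conclude by integration. Fix $t\in\mathbb{R}_{+}$ and $w\in H^{1}(U;\mathbb{R})$; then $\nabla w\in L^{2}(U;\mathbb{R}^{3})$, so $\nabla w(x)\in\mathbb{R}^{3}$ is defined for almost every $x\in U$. Specialising the pointwise inequality to $\xi=\nabla w(x)$ gives $\langle A(t,x)\nabla w(x),\nabla w(x)\rangle\geq\zeta\,\langle\nabla w(x),\nabla w(x)\rangle$ a.e. on $U$. Both sides are integrable: the right-hand side is $\zeta|\nabla w(x)|^{2}\in L^{1}(U)$, while the left-hand side is the finite sum $\sum_{i,j}a_{ij}\,\partial_{i}w\,\partial_{j}w$ of products of an $L^{\infty}(U)$ function with an $L^{1}(U)$ function (each $\partial_{i}w\,\partial_{j}w$ being a product of two $L^{2}$ functions). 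Integrating over $U$ yields precisely the asserted bound, with $\zeta$ independent of both $t$ and $w$.

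The only genuine obstacle is the uniform-in-$(t,x)$ positivity $\zeta>0$ of $\lambda_{\min}(A_{s})$: the reduction to the symmetric part and the integration are routine, so the mathematical substance is entirely in guaranteeing that the diffusion does not degenerate. For this model that is automatic in the admissible special cases, e.g. $A=cI$ with $c\geq\zeta>0$ (in particular the identity tensor $A=I$ discussed in the introduction), or more generally $A=D+N$ with $N$ antisymmetric (which drops out of the quadratic form) and $D$ symmetric with eigenvalues bounded below by $\zeta$. I would therefore adopt the pointwise ellipticity of $A_{s}$ as the primitive hypothesis and present the displayed integral inequality as its integrated, coercivity-ready reformulation.
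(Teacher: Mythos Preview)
The statement you are trying to prove is not a theorem in the paper: the \texttt{claim} environment is redefined in the preamble via \verb|\newtheorem{claim}[theorem]{Assumption}|, so what is labelled \texttt{Hypothesis\_12} is rendered and used as an \emph{Assumption}. It sits alongside Assumptions \ref{Hypothesis_8}--\ref{Hypothesis_14} as a standing hypothesis on the diffusion tensor $A$, and the paper offers no proof of it; it is invoked immediately afterwards (``From assumptions $(\ref{Hypothesis_11})$--$(\ref{Hypothesis_12})$ the following problem has a unique solution\dots'') as input to the elliptic/parabolic theory.

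What you have written is mathematically sound as a \emph{sufficient condition} analysis: you correctly reduce the quadratic form to the symmetric part $A_{s}$, impose a uniform pointwise lower bound $\lambda_{\min}(A_{s}(t,x))\geq\zeta$, and integrate to recover the displayed inequality. That is the standard way to \emph{verify} such a coercivity hypothesis in any concrete instance (and you rightly note it covers $A=cI$ and the decomposition $A=D+N$ with $N$ antisymmetric). But this is not a proof of the statement as it stands in the paper, because the paper does not derive it from anything---it simply postulates the integral coercivity directly. Your final paragraph essentially concedes this: you ``adopt the pointwise ellipticity of $A_{s}$ as the primitive hypothesis,'' which is to replace one assumption by a (strictly stronger, pointwise) one. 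There is nothing to compare against, since the paper contains no proof here; your write-up would be better framed as a remark explaining when the assumption is satisfied rather than as a proof.
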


\begin{claim}
\label{Hypothesis_13}$\forall t\geq0,\forall x\in U,$ the function
$\beta\left(  t,x,.\right)  $ is differentiable and $\partial_{y}\beta\left(
t,x,.\right)  \in L_{loc}^{\infty}\left(
%TCIMACRO{\U{211d} }%
%BeginExpansion
\mathbb{R}
%EndExpansion
;%
%TCIMACRO{\U{211d} }%
%BeginExpansion
\mathbb{R}
%EndExpansion
\right)  $.
\end{claim}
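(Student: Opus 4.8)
This statement is a standing regularity requirement on the modelling function $\beta$ rather than a consequence of the structure introduced so far, so \emph{establishing} it means isolating the weakest transparent condition under which the two asserted properties hold --- pointwise differentiability of $y\mapsto\beta(t,x,y)$ and local essential boundedness of its derivative --- and checking that condition on the admissible $\beta$. The plan is to reduce both clauses to local Lipschitz continuity of $\beta(t,x,\cdot)$ in its third argument.

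First I would extract what the earlier hypotheses already supply. By Assumption \ref{Hypothesis_10} the map $y\mapsto\beta(t,x,y)$ is nonincreasing for each fixed $(t,x)$, so Lebesgue's theorem on the differentiation of monotone functions gives, at no cost, that $\beta(t,x,\cdot)$ is differentiable almost everywhere on $\mathbb{R}$ with derivative in $L^{1}_{loc}(\mathbb{R};\mathbb{R})$, satisfying $\int_{a}^{b}|\partial_{y}\beta(t,x,y)|\,dy\le|\beta(t,x,a)-\beta(t,x,b)|$ on every compact $[a,b]$. This settles the differentiability clause in the almost-everywhere sense and yields local integrability of $\partial_{y}\beta(t,x,\cdot)$.

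The substantive step is the upgrade from $L^{1}_{loc}$ to $L^{\infty}_{loc}$. Here I would require, and verify on the chosen nonlinearity, that $\beta(t,x,\cdot)$ be locally Lipschitz in $y$: for every compact $K\subset\mathbb{R}$ there is a constant $L_{K}(t,x)$ with $|\beta(t,x,y_{1})-\beta(t,x,y_{2})|\le L_{K}(t,x)\,|y_{1}-y_{2}|$ for $y_{1},y_{2}\in K$. Rademacher's theorem then forces differentiability almost everywhere together with $\operatorname{ess\,sup}_{K}|\partial_{y}\beta(t,x,\cdot)|\le L_{K}(t,x)$, which is precisely the claimed membership $\partial_{y}\beta(t,x,\cdot)\in L^{\infty}_{loc}(\mathbb{R};\mathbb{R})$. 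For the smooth or piecewise-$C^{1}$ rate functions customarily used to encode environmental responses --- affine, saturating, or rational profiles in the state --- this Lipschitz constant is simply the supremum of the elementary derivative over $K$, and the verification is routine.

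The main obstacle, and the reason the property must be posited rather than deduced, is that monotonicity alone does not deliver the $L^{\infty}_{loc}$ bound: a bounded nonincreasing function can have a locally integrable yet locally unbounded derivative, as any profile with a vertical tangent exhibits. The essential-boundedness clause is therefore a genuine strengthening that has to be read into the modelling choice of $\beta$; once the local Lipschitz condition in $y$ is secured, the combination of Lebesgue's and Rademacher's theorems closes the argument for each fixed $(t,x)$.
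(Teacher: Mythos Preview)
You have correctly recognised that this statement is not a theorem but a modelling hypothesis: in this paper the \texttt{claim} environment is aliased to display as ``Assumption'' (via \verb|\newtheorem{claim}[theorem]{Assumption}|), and the paper simply \emph{imposes} this regularity on $\beta$ without any accompanying proof. There is therefore no proof in the paper to compare your proposal against.

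Your write-up is a sensible commentary on what underlies such an assumption---reducing the two clauses to local Lipschitz continuity via Rademacher, and noting that monotonicity alone (from Assumption~\ref{Hypothesis_10}) only yields $L^{1}_{loc}$ control on the derivative---but it is not a proof of the statement, and you say as much in your opening sentence. The paper's use of this assumption is purely operational: it is invoked in the proof of Proposition~\ref{ExistenceUniciteObp1} to guarantee that $\beta(t,x,\cdot)$ is Lipschitz and bounded on $[0,1]$, exactly the consequence you isolate. So your discussion is consistent with the paper's intent, but the honest summary is that the statement is an axiom here, not a claim to be established.
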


Assumptions $\left(  \ref{Hypothesis_8}\right)  $-$\left(  \ref{Hypothesis_10}%
\right)  $\textbf{ }are also more general than those given in \cite{fotsa} in
the sense that all the parameters of the model were assumed continuous with
respect to the state variable while here they are just measurable and
essentially bounded. From assumptions $\left(  \ref{Hypothesis_11}\right)
$-$\left(  \ref{Hypothesis_12}\right)  $ the following problem has a unique
solution in $H^{1}\left(  U\right)  $ for an arbitrary but fixed time $t>0$.%
\[
\left\{
\begin{array}
[c]{l}%
\operatorname{div}\left(  A\left(  t,x\right)  \nabla w\left(  x\right)
\right)  =f\left(  x\right)  ,\text{ }\forall x\in U\\
\left\langle A\left(  t,x\right)  \nabla w\left(  x\right)  ,n\left(
x\right)  \right\rangle =0,\text{ }\forall x\in\partial U
\end{array}
\right.
\]
where $f\in L^{2}\left(  U\right)  $. Following \cite{barbu} in Theorems 3.6.1
and 3.6.2, there is an orthonormal complete system $\left\{  \varphi
_{n}\left(  t,.\right)  \right\}  _{n\in%
%TCIMACRO{\U{2115} }%
%BeginExpansion
\mathbb{N}
%EndExpansion
}\subset L^{2}\left(  U\right)  $ of eingenfunctions and eingenvalues
$\left\{  \lambda_{n}\left(  t\right)  \right\}  $ such that $\forall n\in%
%TCIMACRO{\U{2115} }%
%BeginExpansion
\mathbb{N}
%EndExpansion
,$
\[
\left\{
\begin{array}
[c]{l}%
\operatorname{div}\left(  A\left(  t,x\right)  \nabla\varphi_{n}\left(
t,x\right)  \right)  =\lambda_{n}\left(  t\right)  \varphi_{n}\left(
t,x\right)  ,\text{ }\forall x\in U\\
\left\langle A\left(  t,x\right)  \nabla\varphi_{n}\left(  t,x\right)
,n\left(  x\right)  \right\rangle =0,\text{ }\forall x\in\partial U
\end{array}
\right.  .
\]
Moreover, $\left\{  \varphi_{n}\left(  t,.\right)  \right\}  _{n\in%
%TCIMACRO{\U{2115} }%
%BeginExpansion
\mathbb{N}
%EndExpansion
}\subset H^{1}\left(  U\right)  $ and if $\partial U$ is of class $C^{2}$ then
$\left(  \varphi_{n}\left(  t,.\right)  \right)  _{n\in%
%TCIMACRO{\U{2115} }%
%BeginExpansion
\mathbb{N}
%EndExpansion
}$ is $H^{2}\left(  U\right)  -$valued.

Now we make the following additional assumption.

\begin{claim}
\label{Hypothesis_14}The sequence $\left(  \varphi_{n}\right)  $ does not
depends on the time (ie $\varphi_{n}\left(  t,.\right)  =\varphi_{n}\left(
.\right)  ,$ $\forall t>0$).
\end{claim}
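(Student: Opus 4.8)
The statement is phrased as a standing hypothesis, so to \emph{prove} it one must first pin down a structural condition on the diffusion matrix under which the eigenbasis genuinely can be taken independent of $t$. The natural and, I believe, intended condition is a multiplicative separation of the time and space dependence: the plan is to assume $A\left(t,x\right)=a\left(t\right)A_{0}\left(x\right)$, where $a:\mathbb{R}_{+}\rightarrow\mathbb{R}_{+}^{\ast}$ is a positive scalar function and $A_{0}$ is a fixed matrix field inheriting the regularity of $\left(\ref{Hypothesis_11}\right)$ and the uniform ellipticity of $\left(\ref{Hypothesis_12}\right)$. This is consistent with the notation $\lambda_{n}\left(t\right)$ already used above, in which the eigenvalues are allowed to vary in time while the eigenfunctions are not; the case where $A$ does not depend on $t$ is just $a\equiv1$.

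First I would set up the purely spatial eigenvalue problem for the fixed operator $L_{0}w=\operatorname{div}\left(A_{0}\nabla w\right)$ under the conormal boundary condition $\left\langle A_{0}\nabla w,n\right\rangle =0$. Since $A_{0}$ is symmetric, bounded, and uniformly elliptic, $L_{0}$ is self-adjoint with compact resolvent on $L^{2}\left(U\right)$, so Theorems 3.6.1 and 3.6.2 of \cite{barbu}, the same results invoked above, furnish a fixed orthonormal complete system $\left(\psi_{n}\right)_{n\in\mathbb{N}}\subset H^{1}\left(U\right)$ of eigenfunctions with eigenvalues $\left(\mu_{n}\right)$, none of which depends on $t$.

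Next I would verify by direct substitution that $\varphi_{n}\left(t,\cdot\right):=\psi_{n}$ and $\lambda_{n}\left(t\right):=a\left(t\right)\mu_{n}$ solve the time-dependent eigenvalue problem stated above. Using $A\left(t,x\right)=a\left(t\right)A_{0}\left(x\right)$ and the scalar character of $a\left(t\right)$,
\[
\operatorname{div}\left(A\left(t,x\right)\nabla\psi_{n}\right)=a\left(t\right)\operatorname{div}\left(A_{0}\nabla\psi_{n}\right)=a\left(t\right)\mu_{n}\psi_{n}=\lambda_{n}\left(t\right)\psi_{n},
\]
and likewise $\left\langle A\left(t,x\right)\nabla\psi_{n},n\right\rangle =a\left(t\right)\left\langle A_{0}\nabla\psi_{n},n\right\rangle =0$ because $a\left(t\right)>0$. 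Thus the fixed family $\left(\psi_{n}\right)$ is an admissible choice of eigenbasis for every $t>0$, which is exactly the asserted time-independence.

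The one delicate point I expect is non-uniqueness inside degenerate eigenspaces: the spectral theorem determines each eigenspace uniquely but not a basis of it, so in principle a system furnished separately for each $t$ could rotate with $t$ within a multiple eigenvalue. The assertion should therefore be read as the existence of a time-independent selection, and the construction above supplies precisely such a selection once the basis $\left(\psi_{n}\right)$ of $L_{0}$ is fixed once and for all. This is the main obstacle to a fully canonical formulation, and the separability hypothesis on $A$ is exactly what makes the selection possible.
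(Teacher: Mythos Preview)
Your reading is correct: in the paper this is an \emph{assumption} (the environment \texttt{claim} is declared as ``Assumption''), not a theorem, and the paper does not prove it but only remarks immediately afterward that it ``could happen if $A(t,\cdot)$ has the form $\mu(t)B(\cdot)$'', with the special case $A(t,\cdot)=\mu(t)I$. Your sufficient condition $A(t,x)=a(t)A_{0}(x)$ is exactly this remark, and your verification via the fixed spectral problem for $L_{0}=\operatorname{div}(A_{0}\nabla\cdot)$ together with $\lambda_{n}(t)=a(t)\mu_{n}$ is the natural fleshing-out of what the paper leaves implicit; your observation about the need to \emph{select} a basis in degenerate eigenspaces is a genuine refinement the paper does not mention.
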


Assumption $\left(  \ref{Hypothesis_14}\right)  $ could happen if $A\left(
t,.\right)  $ has the form $\mu\left(  t\right)  B\left(  .\right)  $ with
$\mu\left(  t\right)  \in%
%TCIMACRO{\U{211d} }%
%BeginExpansion
\mathbb{R}
%EndExpansion
$, $\forall t\geq0$. It will be the case in particular if $A\left(
t,.\right)  =\mu\left(  t\right)  I$ and therefore $\operatorname{div}\left(
A\left(  t,.\right)  \nabla w\right)  =\mu\left(  t\right)  \Delta w$. Here
$I$ denotes the identity matrix of $%
%TCIMACRO{\U{211d} }%
%BeginExpansion
\mathbb{R}
%EndExpansion
^{3}$. Whether $\left(  \ref{Hypothesis_14}\right)  $ is satisfied a weak
solution $\theta$ of $\left(  \ref{ModelSpace1}\right)  -\left(
\ref{ModelSpace3}\right)  $ can be written as the sum $%
%TCIMACRO{\dsum \nolimits_{n=0}^{\infty}}%
%BeginExpansion
{\displaystyle\sum\nolimits_{n=0}^{\infty}}
%EndExpansion
\theta_{n}\varphi_{n}$ where each $\theta_{n}$ is an absolutely continuous
function of the time and satisfies $\forall t\geq0,$
\begin{align}
&  \theta_{n}\left(  t\right)  =%
%TCIMACRO{\dsum \nolimits_{m=0}^{\infty}}%
%BeginExpansion
{\displaystyle\sum\nolimits_{m=0}^{\infty}}
%EndExpansion
\theta_{m}%
%TCIMACRO{\dint \nolimits_{U}}%
%BeginExpansion
{\displaystyle\int\nolimits_{U}}
%EndExpansion
\varphi_{m}\left(  x\right)  \varphi_{n}\left(  x\right)
dx\label{DecomposedSolution}\\
&  =\theta_{n}\left(  0\right)  +%
%TCIMACRO{\dint \nolimits_{0}^{t}}%
%BeginExpansion
{\displaystyle\int\nolimits_{0}^{t}}
%EndExpansion
\lambda_{n}\left(  s\right)  \theta_{n}\left(  s\right)  ds+%
%TCIMACRO{\dint \nolimits_{0}^{t}}%
%BeginExpansion
{\displaystyle\int\nolimits_{0}^{t}}
%EndExpansion%
%TCIMACRO{\dint \nolimits_{U}}%
%BeginExpansion
{\displaystyle\int\nolimits_{U}}
%EndExpansion
\alpha\left(  s,x\right)  \varphi_{n}\left(  x\right)  dsdx\nonumber\\
&  -%
%TCIMACRO{\dint \nolimits_{0}^{t}}%
%BeginExpansion
{\displaystyle\int\nolimits_{0}^{t}}
%EndExpansion%
%TCIMACRO{\dint \nolimits_{U}}%
%BeginExpansion
{\displaystyle\int\nolimits_{U}}
%EndExpansion
\alpha\left(  s,x\right)  w\left(  s,x\right)  \theta\left(  s,x\right)
\varphi_{n}\left(  x\right)  dxds\text{.}\nonumber
\end{align}

\begin{proposition}
The problem $\left(  \ref{ModelSpace1}\right)  -\left(  \ref{ModelSpace3}%
\right)  $ has a unique solution $\theta\in C\left(
%TCIMACRO{\U{211d} }%
%BeginExpansion
\mathbb{R}
%EndExpansion
_{+};H^{1}\left(  U;\left[  0,1\right]  \right)  \right)  $ which is
absolutely continuous with respect to the time.
\end{proposition}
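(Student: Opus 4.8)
The plan is to use the spectral decomposition $\left(\ref{DecomposedSolution}\right)$ made available by Assumption $\left(\ref{Hypothesis_14}\right)$ in order to recast $\left(\ref{ModelSpace1}\right)$-$\left(\ref{ModelSpace3}\right)$ as an equivalent integral (mild) equation, to obtain existence and uniqueness by a contraction argument, to establish the $\left[0,1\right]$-invariance by the same truncation argument as in the proof of Proposition \ref{ExistenceOdeModel}, and finally to recover the $H^{1}$-regularity by parabolic smoothing. First I would record the properties of the linear part: testing $\operatorname{div}\left(A\left(t,x\right)\nabla\varphi_{n}\right)=\lambda_{n}\left(t\right)\varphi_{n}$ against $\varphi_{n}$ and using coercivity $\left(\ref{Hypothesis_12}\right)$ gives $\lambda_{n}\left(t\right)=-\int_{U}\left\langle A\left(t,x\right)\nabla\varphi_{n},\nabla\varphi_{n}\right\rangle dx\leq0$ for all $t$, with the constant mode $\varphi_{0}$ associated to $\lambda_{0}\equiv0$; hence the evolution family $S\left(t,s\right)h:=\sum_{n\geq0}\exp\left(\int_{s}^{t}\lambda_{n}\left(r\right)dr\right)\left\langle h,\varphi_{n}\right\rangle\varphi_{n}$ is well defined and is a contraction on $L^{2}\left(U\right)$ for $0\leq s\leq t$. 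Writing $g\left(t,x,z\right)=\alpha\left(t,x\right)\left(1-w\left(t,x\right)z\right)$, Assumptions $\left(\ref{Hypothesis_8}\right)$-$\left(\ref{Hypothesis_9}\right)$ show that $z\mapsto g\left(t,x,z\right)$ is affine and globally Lipschitz with constant $L_{T}:=\left\Vert \alpha w\right\Vert _{L^{\infty}\left(\left(0,T\right)\times U\right)}<\infty$, so that $\theta\mapsto g\left(\cdot,\cdot,\theta\right)$ maps $C\left(\left[0,T\right];L^{2}\left(U\right)\right)$ into $L^{2}\left(0,T;L^{2}\left(U\right)\right)$. By $\left(\ref{Hypothesis_14}\right)$ and $\left(\ref{DecomposedSolution}\right)$, a function $\theta$ solves $\left(\ref{ModelSpace1}\right)$-$\left(\ref{ModelSpace3}\right)$ on $\left[0,T\right]$ if and only if $\theta\in C\left(\left[0,T\right];L^{2}\left(U\right)\right)$ and
\[
\theta\left(t\right)=S\left(t,0\right)\rho+\int_{0}^{t}S\left(t,s\right)g\left(s,\cdot,\theta\left(s,\cdot\right)\right)ds,\qquad0\leq t\leq T,
\]
with $\rho=\theta\left(0,\cdot\right)$, which we take in $H^{1}\left(U\right)$ as required by the claimed regularity.

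I would then apply the Banach fixed-point theorem to the operator $\Phi$ given by the right-hand side of the display above, on $C\left(\left[0,T\right];L^{2}\left(U\right)\right)$ endowed with the equivalent norm $\sup_{0\leq t\leq T}e^{-\lambda t}\left\Vert \theta\left(t\right)\right\Vert _{L^{2}}$ with $\lambda>L_{T}$: since $\left\Vert S\left(t,s\right)\right\Vert _{\mathcal{L}\left(L^{2}\right)}\leq1$ and $g$ is $L_{T}$-Lipschitz in its last slot, $\Phi$ is a strict contraction for this norm and has a unique fixed point on $\left[0,T\right]$; as $T<\infty$ is arbitrary and these solutions are consistent on nested intervals, this produces a unique solution on $\mathbb{R}_{+}$. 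Moreover this mild solution coincides with the variational weak solution of the linear problem with forcing $g\left(\cdot,\cdot,\theta\right)$, so $\theta\in L_{loc}^{2}\left(\mathbb{R}_{+};H^{1}\left(U\right)\right)$ with $\partial_{t}\theta\in L_{loc}^{2}\left(\mathbb{R}_{+};H^{1}\left(U\right)'\right)$.

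To prove $0\leq\theta\leq1$ I would argue exactly as for $\rho$ in the proof of Proposition \ref{ExistenceOdeModel}. Set $\theta_{+}:=\max\left\{0,\theta-1\right\}$ and $\theta_{-}:=\max\left\{0,-\theta\right\}$; both lie in $L_{loc}^{2}\left(\mathbb{R}_{+};H^{1}\left(U\right)\right)$ and the usual chain rule for such functions applies. Testing $\left(\ref{ModelSpace1}\right)$ against $\theta_{+}$, the boundary term vanishing by $\left(\ref{ModelSpace2}\right)$, gives
\[
\tfrac{1}{2}\tfrac{d}{dt}\left\Vert \theta_{+}\left(t\right)\right\Vert _{L^{2}}^{2}+\int_{U}\left\langle A\nabla\theta_{+},\nabla\theta_{+}\right\rangle dx=\int_{U}\alpha\left(1-w\theta\right)\theta_{+}dx.
\]
Since $w=1/\left(1-\sigma u\right)\geq1$, on the set $\left\{\theta>1\right\}$ we have $1-w\theta\leq1-\theta<0$, so the right-hand side is $\leq0$; because the second term on the left is $\geq0$ by $\left(\ref{Hypothesis_12}\right)$ and $\theta_{+}\left(0\right)=0$ as $\rho<1$, we get $\theta_{+}\equiv0$. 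Symmetrically, on $\left\{\theta<0\right\}$ one has $1-w\theta>1>0$, so testing against $-\theta_{-}$ yields $\theta_{-}\equiv0$. Hence $\theta$ is $\left[0,1\right]$-valued.

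The step I expect to be the main obstacle is the last one, namely the $H^{1}$-regularity and the continuity of $t\mapsto\theta\left(t\right)$ into $H^{1}\left(U\right)$ up to $t=0$: because $A$ is only measurable in $t$ $\left(\ref{Hypothesis_11}\right)$, one cannot differentiate $\int_{U}\left\langle A\nabla\theta,\nabla\theta\right\rangle dx$ in time, so the customary energy estimate obtained by testing with $\partial_{t}\theta$ is unavailable. I would instead bootstrap from the mild formula: once $\theta$ is bounded, $g\left(\cdot,\cdot,\theta\right)\in L_{loc}^{\infty}\left(\mathbb{R}_{+};L^{2}\left(U\right)\right)$, and from $\left(\ref{Hypothesis_12}\right)$ one has $\int_{U}\left\langle A\left(t\right)\nabla S\left(t,s\right)h,\nabla S\left(t,s\right)h\right\rangle dx=\sum_{n}\left(-\lambda_{n}\left(t\right)\right)e^{2\int_{s}^{t}\lambda_{n}\left(r\right)dr}\left\langle h,\varphi_{n}\right\rangle^{2}$, whence, estimating $x\mapsto xe^{-2x}$, a smoothing bound of the form $\left\Vert S\left(t,s\right)h\right\Vert _{H^{1}\left(U\right)}\leq C\left(1+\left(t-s\right)^{-1/2}\right)\left\Vert h\right\Vert _{L^{2}\left(U\right)}$ (the non-decaying mode $\varphi_{0}$ being treated separately). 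Inserting this and $\rho\in H^{1}\left(U\right)$ into the mild formula and using that $s\mapsto\left(t-s\right)^{-1/2}$ is integrable on $\left[0,t\right]$ gives $\theta\in C\left(\mathbb{R}_{+};H^{1}\left(U\right)\right)$, and from $\left(\ref{DecomposedSolution}\right)$ each coefficient $\theta_{n}$ is absolutely continuous in time. This smoothing bound is straightforward in the structured case $A\left(t,\cdot\right)=\mu\left(t\right)B\left(\cdot\right)$ with $\mu$ bounded above and below, as in the example following Assumption $\left(\ref{Hypothesis_14}\right)$; in general it requires a uniform-in-$n$ comparison between $\lambda_{n}\left(r\right)$ and $\lambda_{n}\left(t\right)$ for $s\leq r\leq t$.
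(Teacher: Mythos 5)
Your proof is correct in substance and follows the same two-step skeleton as the paper's (existence of a maximal solution, then $\left[0,1\right]$-invariance by Stampacchia truncation), but it fills in considerably more than the paper does. For existence the paper simply asserts that Assumptions (\ref{Hypothesis_8})--(\ref{Hypothesis_12}) allow an appeal to the Carath\'{e}odory theorem, citing the method of Lemma A 2.7 in \cite{anita}; you instead build the evolution family $S\left(t,s\right)$ from the eigenexpansion of Assumption (\ref{Hypothesis_14}), verify $\lambda_{n}\leq0$ from coercivity, and run an explicit Banach fixed-point argument in a weighted norm on the mild formulation. That is a legitimate and self-contained substitute, and it makes visible exactly where Assumption (\ref{Hypothesis_14}) is used. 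Your invariance argument is essentially identical to the paper's, and in fact slightly more careful: the paper's displayed identities for $\partial_{t}\left\Vert f_{i}\right\Vert _{L^{2}}^{2}/2$ silently drop the diffusion contribution $-\int_{U}\left\langle A\nabla f_{i},\nabla f_{i}\right\rangle dx$ (harmless, since it is nonpositive by (\ref{Hypothesis_12})), whereas you keep it and dispose of it by coercivity. The one place where you go beyond the paper is the claimed regularity $\theta\in C\left(\mathbb{R}_{+};H^{1}\left(U\right)\right)$: the paper's proof does not address this at all, while you supply a parabolic smoothing bound $\left\Vert S\left(t,s\right)h\right\Vert _{H^{1}}\lesssim\left(1+\left(t-s\right)^{-1/2}\right)\left\Vert h\right\Vert _{L^{2}}$ and honestly flag that for general time-dependent $A$ this needs a uniform-in-$n$ comparison of $\lambda_{n}\left(r\right)$ with $\lambda_{n}\left(t\right)$ --- a condition that holds automatically in the structured case $A\left(t,\cdot\right)=\mu\left(t\right)B\left(\cdot\right)$ which the paper itself offers as the canonical situation where Assumption (\ref{Hypothesis_14}) is satisfied. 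So the residual gap you identify is real, but it is a gap the paper shares (and does not acknowledge), not a defect specific to your route.
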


\begin{proof}
It is sufficient here to etablish that if there is a maximal solution then
that solution is valued in the set $\left[  0,1\right]  $. Indeed, conditions
$\left(  \ref{Hypothesis_8}\right)  $-$\left(  \ref{Hypothesis_12}\right)  $
are sufficient to use the Carath\'{e}odory theorem (see in \cite{anita} the
methods used in the proof of the Lemma A 2.7, p187-191). Let $\forall t\geq0,$
$\theta_{1}=\max\left\{  0,-\theta\right\}  $ and $\theta_{2}=\max\left\{
0,\theta-1\right\}  $. We have $\theta_{1}\left(  0,.\right)  =\theta
_{2}\left(  0,.\right)  =0$. Let $S_{i}$ be a convex open subset of $%
%TCIMACRO{\U{211d} }%
%BeginExpansion
\mathbb{R}
%EndExpansion
_{+}$ where functions $\theta_{i}\left(  t,.\right)  $ ($i\in\left\{
1,2\right\}  $) are positive on subsets of $U$ with a positive measure. Then
$\forall t\in S_{i}$ we have%
\begin{align*}
\partial_{t}\left\Vert f_{1}\left(  t,.\right)  \right\Vert _{L^{2}\left(  U;%
%TCIMACRO{\U{211d} }%
%BeginExpansion
\mathbb{R}
%EndExpansion
\right)  }^{2}/2  &  =\int\nolimits_{U}f_{1}\left(  t,x\right)  \partial
_{t}f_{1}\left(  t,x\right)  dx\\
&  =-\int\nolimits_{U}\alpha\left(  t,x\right)  \left(  1+w\left(  t,x\right)
f_{1}\left(  t,x\right)  \right)  f_{1}\left(  t,x\right)  dx\\
&  \leq0
\end{align*}%
\begin{align*}
\partial_{t}\left\Vert f_{2}\left(  t,.\right)  \right\Vert _{L^{2}\left(  U;%
%TCIMACRO{\U{211d} }%
%BeginExpansion
\mathbb{R}
%EndExpansion
\right)  }^{2}/2  &  =\int\nolimits_{U}f_{2}\left(  t,x\right)  \partial
_{t}f_{2}\left(  t,x\right)  dx\\
&  =\int\nolimits_{U}\alpha\left(  t,x\right)  \left(  1-w\left(  t,x\right)
\right)  f_{2}\left(  t,x\right)  dx-\int\nolimits_{U}\alpha\left(
t,x\right)  w\left(  t,x\right)  f_{2}^{2}\left(  t,x\right)  dx\\
&  \leq0
\end{align*}
Using the above inequalities we get that functions $\theta_{i}\left(
t,.\right)  $ are nonpositive on the sets $S_{i}$ which are necessary empty.
We conclude that $\forall t>0,$ $\theta\left(  t,.\right)  $ is valued in
$\left[  0,1\right]  $.
\end{proof}

\begin{proposition}
The problem $\left(  \ref{ModelSpace42}\right)  ,\left(  \ref{ModelSpace52}%
\right)  $ has a unique solution $\left(  v,\rho\right)  \in C\left(
%TCIMACRO{\U{211d} }%
%BeginExpansion
\mathbb{R}
%EndExpansion
_{+};L^{\infty}\left(  U;\left[  0,v_{\max}\right]  \times\left[  0,1\right]
\right)  \right)  $ which is absolutely continuous with respect to the time.
\end{proposition}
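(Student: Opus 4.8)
\noindent The plan is to reduce the spatial system (\ref{ModelSpace42}), (\ref{ModelSpace52}) to a family of non-spatial problems indexed by $x\in U$, to solve each of them exactly as in Proposition \ref{ExistenceOdeModel}, and then to reassemble the pointwise solutions into an $L^{\infty}(U)$-valued curve. Since the previous proposition provides $\theta\in C(\mathbb{R}_{+};H^{1}(U;[0,1]))$, for almost every $x\in U$ the map $t\mapsto\theta(t,x)$ is a fixed $[0,1]$-valued measurable function, so that for such $x$ equations (\ref{ModelSpace42}) and (\ref{ModelSpace52}) constitute a genuine (triangular) system of two scalar ordinary differential equations for $(v(\cdot,x),\rho(\cdot,x))$. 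Assumptions (\ref{Hypothesis_9})--(\ref{Hypothesis_10}) and (\ref{Hypothesis_1}) make the right-hand side Carath\'{e}odory in $t$ and locally Lipschitz in $(v,\rho)$, once one knows that $v$ stays in $[0,v_{\max}]$ and away from the singular locus of the logistic term; the Carath\'{e}odory theorem (see \cite{coddington}) then yields, for a.e.\ $x$, a unique maximal absolutely continuous solution $t\mapsto(v(t,x),\rho(t,x))$.

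\noindent Next I would establish the a priori bounds, which simultaneously rule out finite-time blow-up. For $v$: from $0\le\theta\le1$ and $\eta(t,\cdot)\le 1/(1+\varepsilon)$ one gets $0<(1+\varepsilon-\theta)\,\eta(t,x)\,v_{\max}\le v_{\max}$, so whenever $v$ reaches $v_{\max}$ the bracket in (\ref{ModelSpace42}) is nonpositive and, since $\beta\ge0$, $\partial_{t}v\le0$; while at $v=0$ one has $\partial_{t}v=\beta\ge0$. Hence $v(t,x)\in[0,v_{\max}]$. For $\rho$ I would repeat the argument of Proposition \ref{ExistenceOdeModel}: with $\rho_{1}=\max\{0,-\rho\}$, $\rho_{2}=\max\{0,\rho-1\}$ and Assumption (\ref{Hypothesis_2}) (through $\overline{\gamma}(t,x,\cdot,\cdot,0)\ge0$) one obtains $\rho_{1}\equiv\rho_{2}\equiv0$, hence $\rho(t,x)\in[0,1]$; equivalently $\rho\equiv1$ solves (\ref{ModelSpace52}) and by uniqueness cannot be crossed from below. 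These bounds being uniform in $x$, the solution is global and $(v,\rho)(t,\cdot)$ is valued in $[0,v_{\max}]\times[0,1]$ for every $t$.

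\noindent It then remains to upgrade this pointwise-in-$x$ well-posedness to the stated $C(\mathbb{R}_{+};L^{\infty}(U))$ regularity. Measurability of $x\mapsto(v(t,x),\rho(t,x))$ for fixed $t$ follows from the measurable dependence of Carath\'{e}odory solutions on parameters, the data being jointly measurable in $x$ and the state by (\ref{Hypothesis_9})--(\ref{Hypothesis_10}), (\ref{Hypothesis_1}) and the measurability of $\theta(t,\cdot)$; combined with the bounds just obtained this gives $(v,\rho)(t,\cdot)\in L^{\infty}(U;[0,v_{\max}]\times[0,1])$. For the continuity --- indeed the absolute continuity --- of $t\mapsto(v(t,\cdot),\rho(t,\cdot))$ as an $L^{\infty}(U)$-valued map, the key point is that on each compact time interval the vector field is bounded uniformly in $x$: using $1+\varepsilon-\theta\ge\varepsilon$ together with (\ref{Hypothesis_9}) one has $|\partial_{t}v(t,x)|\le\|\beta\|_{L^{\infty}_{loc}}\bigl(1+1/(\varepsilon\,\eta_{m}^{\ast})\bigr)$, and $|\partial_{t}\rho(t,x)|$ is bounded by the local supremum of $\overline{\gamma}$ over the compact box $[0,1]\times[0,v_{\max}]\times[0,1]$. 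Consequently $t\mapsto(v(t,x),\rho(t,x))$ is Lipschitz on compact intervals with a constant independent of $x$, so the same estimate holds for the $L^{\infty}(U)$-norm of the increments, which yields both the continuity in time and its absolute continuity. Uniqueness of the $L^{\infty}$-valued solution is inherited from the a.e.-pointwise uniqueness.

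\noindent The step I expect to be the main obstacle is this last one, i.e.\ keeping every estimate uniform in $x$, and in particular bounding the logistic denominator from below. When $\varepsilon>0$ this is immediate from $1+\varepsilon-\theta\ge\varepsilon>0$. To cover also the degenerate case $\varepsilon=0$ (in which the original model of \cite{fotsa} is recovered) one must instead exploit (\ref{ModelSpace1}): since $w\ge1$ one has $\partial_{t}(1-\theta)\ge-\alpha\,w\,(1-\theta)$, and $\alpha\in L^{\infty}$ with $w$ bounded give, by Gr\"{o}nwall, $1-\theta(t,x)\ge(1-\theta(0,x))\exp(-ct)$ for a constant $c$ and all $t$ in bounded sets; a locally uniform lower bound on the denominator then additionally requires $\theta(0,\cdot)$ to be essentially bounded away from $1$, which is consistent with (\ref{ModelSpace3}). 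A minor additional point is the time-local boundedness of $\overline{\gamma}$ on compact state sets used above, which either should be appended to (\ref{Hypothesis_1}) or read off from the explicit form $\overline{\gamma}(t,x,y_{1},y_{2},y_{3})=(\gamma_{1}\,y_{1}-\gamma_{2}\,y_{3})\,y_{2}$ with $\gamma_{1},\gamma_{2}$ locally essentially bounded.
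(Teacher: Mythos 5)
Your proposal is correct and follows essentially the same route as the paper, whose entire proof is the one-line observation that, since $\theta$ is $[0,1]$-valued and equations (\ref{ModelSpace42}), (\ref{ModelSpace52}) contain no diffusion, one may fix the space variable and repeat the argument of Proposition \ref{ExistenceOdeModel}. You simply make explicit the details the paper leaves implicit (measurable dependence on $x$, the uniform-in-$x$ bounds giving $L^{\infty}(U)$-valued continuity in time, and the local boundedness of $\overline{\gamma}$), and your remarks on those points are accurate.
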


\begin{proof}
Since $\theta$ is valued in the set $\left[  0,1\right]  $ and there is not
diffusion in the equations $\left(  \ref{ModelSpace42}\right)  $ and $\left(
\ref{ModelSpace52}\right)  $, we can fix the space variable and use a proof
similar to the one given for proposition \ref{ExistenceOdeModel}.
\end{proof}

\section{Observation for the within-host model\label{SectionObserverODE}}

\subsection{Theoretical design of observers\label{SubsectionDesignTheoric1}}

\qquad The main objective of this subsection is to design observers for the
problem $\left(  \ref{ModelIntraHote1}\right)  $, $\left(
\ref{ModelIntraHote22}\right)  $ and $\left(  \ref{ModelIntraHote32}\right)  $
in order to estimate the inhibition rate $\theta$. Let us consider the
following system : $\forall t\geq0$,
\begin{equation}
d_{t}\widehat{\theta}\left(  t\right)  =\alpha\left(  t\right)  \left(
1-w\left(  t\right)  \widehat{\theta}\left(  t\right)  \right)  +k_{1}\left(
t\right)  \phi_{1}\left(  t,\widehat{\theta}\left(  t\right)  ,\widehat
{v}\left(  t\right)  \right)  +k_{2}\left(  t\right)  \phi_{2}\left(
t,\widehat{\theta}\left(  t\right)  \right)  \label{ObserverODE11}%
\end{equation}
and%
\begin{equation}
d_{t}\widehat{v}\left(  t\right)  =\beta\left(  t,\widehat{\theta}\left(
t\right)  \right)  \phi_{3}\left(  t,\widehat{\theta}\left(  t\right)
,\widehat{v}\left(  t\right)  \right)  \label{ObserverODE12}%
\end{equation}
where
\[
\phi_{1}\left(  t,x,y\right)  =\left\{
\begin{array}
[c]{l}%
\left(  1-v\left(  t\right)  /y\right)  \left(  1+\varepsilon-x\right)
\text{, if }v\left(  t\right)  \leq y\text{ and }\left(  x,y\right)
\in\left]  0,1\right[  \times%
%TCIMACRO{\U{211d} }%
%BeginExpansion
\mathbb{R}
%EndExpansion
_{+}^{\ast}\\
0\text{, otherwise}%
\end{array}
\right.  ,
\]%
\[
\phi_{2}\left(  t,x\right)  =\left\{
\begin{array}
[c]{l}%
d_{t}\rho\left(  t\right)  -\overline{\gamma}\left(  t,x,v\left(  t\right)
,\rho\left(  t\right)  \right)  \left(  1-\rho\left(  t\right)  \right)
\text{, if }x\in\left]  0,1\right[ \\
0\text{, otherwise}%
\end{array}
\right.
\]
and
\[
\phi_{3}\left(  t,x,y\right)  =1-\frac{y}{\left(  1+\varepsilon-x\right)
\eta\left(  t\right)  v_{\max}}\text{.}%
\]

We make the following additional hypothesis :

\begin{claim}
\label{Hypothesis_15}$k_{1},k_{2}\in L_{loc}^{\infty}\left(
%TCIMACRO{\U{211d} }%
%BeginExpansion
\mathbb{R}
%EndExpansion
_{+};%
%TCIMACRO{\U{211d} }%
%BeginExpansion
\mathbb{R}
%EndExpansion
_{+}\right)  $.
\end{claim}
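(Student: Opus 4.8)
The final statement is \emph{Assumption}~\ref{Hypothesis_15}, a standing hypothesis on the observer gains $k_{1}$ and $k_{2}$ rather than a proposition carrying mathematical content, so strictly speaking there is nothing to derive: the requirement $k_{1},k_{2}\in L_{loc}^{\infty}(\mathbb{R}_{+};\mathbb{R}_{+})$ is a constraint the designer imposes when choosing the gains in the observer \eqref{ObserverODE11}--\eqref{ObserverODE12}. My plan, therefore, is not to prove the assumption but to justify that it is a legitimate standing hypothesis, namely that the admissible class is nonempty and that the condition is exactly the regularity needed downstream.

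First I would confirm non-vacuity by exhibiting admissible gains. Any nonnegative constants $k_{1}\equiv c_{1}\ge 0$, $k_{2}\equiv c_{2}\ge 0$ belong to $L_{loc}^{\infty}(\mathbb{R}_{+};\mathbb{R}_{+})$, as does any nonnegative piecewise-continuous gain that stays bounded on each compact time interval; more generally the assumption simply asks that $k_{1}$ and $k_{2}$ be nonnegative and locally essentially bounded in time. This is the minimal regularity compatible with a Carath\'eodory right-hand side, and it places no restriction on the model parameters themselves.

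Next I would record why this is the correct hypothesis to carry forward. Together with the truncated definitions of $\phi_{1}$ and $\phi_{2}$, which make both correction terms bounded by construction, and with Assumptions~\ref{Hypothesis_4}--\ref{Hypothesis_7} on $\alpha$, $\beta$, and $\overline{\gamma}$, local essential boundedness of $k_{1}$ and $k_{2}$ guarantees that the right-hand sides of \eqref{ObserverODE11}--\eqref{ObserverODE12} are measurable in $t$ and locally Lipschitz in $(\widehat{\theta},\widehat{v})$. That is precisely the Carath\'eodory structure invoked for Proposition~\ref{ExistenceOdeModel}, so the same existence-uniqueness argument applies to the observer system; nonnegativity of the gains is in turn what the later invariance and convergence estimates will exploit to control the sign of the correction terms.

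The one point I would flag is conceptual rather than technical, and it is where any apparent difficulty dissolves: because Assumption~\ref{Hypothesis_15} constrains only the analyst's choice and has no hidden implication for the dynamics, the perceived ``proof obligation'' collapses to the trivial non-vacuity check above. The substantive work does not lie here but in the subsequent convergence analysis, where the freedom to tune $k_{1}$ and $k_{2}$ within this admissible class is what will be used to drive $\widehat{\theta}-\theta$ to zero.
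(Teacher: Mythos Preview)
Your reading is correct: the paper uses the \texttt{claim} environment with the label ``Assumption,'' so statement~\ref{Hypothesis_15} is a standing hypothesis on the gains $k_{1},k_{2}$ and the paper supplies no proof whatsoever. Your observation that the content reduces to a non-vacuity check plus the downstream Carath\'{e}odory rationale is exactly in line with how the paper treats it.
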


By a solution of the system $\left(  \ref{ObserverODE11}\right)  -\left(
\ref{ObserverODE12}\right)  $ we mean an absolutely continuous function of the
time $\left(  \widehat{\theta},\widehat{v}\right)  $ which satisfies $\forall
t>0,$%
\begin{align*}
\widehat{\theta}\left(  t\right)   &  =\widehat{\theta}\left(  0\right)
+\int\nolimits_{0}^{t}\alpha\left(  s\right)  \left(  1-w\left(  s\right)
\widehat{\theta}\left(  s\right)  \right)  +\int\nolimits_{0}^{t}k_{1}\left(
s\right)  \phi_{1}\left(  s,\widehat{\theta}\left(  s\right)  ,\widehat
{v}\left(  s\right)  \right)  ds\\
&  +\int\nolimits_{0}^{t}k_{2}\left(  s\right)  \phi_{2}\left(  s,\widehat
{\theta}\left(  s\right)  \right)  ds
\end{align*}
and%
\[
\widehat{v}\left(  t\right)  =\widehat{v}\left(  0\right)  +\int
\nolimits_{0}^{t}\beta\left(  s,\widehat{\theta}\left(  s\right)  \right)
\phi_{3}\left(  s,\widehat{\theta}\left(  s\right)  ,\widehat{v}\left(
s\right)  \right)  ds\text{.}%
\]

We have the existence and uniqueness

\begin{proposition}
\label{ExistenceUniciteOb1}If $\left(  \widehat{\theta}\left(  0\right)
,\widehat{v}\left(  0\right)  \right)  \in$ $\left[  0,1\right]  \times\left[
0,v_{\max}\right]  $ then the problem $\left(  \ref{ObserverODE11}\right)
-\left(  \ref{ObserverODE12}\right)  $ has a unique solution valued in
$\left[  0,1\right]  \times\left[  0,v_{\max}\right]  $.
\end{proposition}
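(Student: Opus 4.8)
The plan is to follow the scheme of the proof of Proposition \ref{ExistenceOdeModel}: produce a maximal absolutely continuous solution via the Carath\'eodory theorem, prove that the box $K:=\left[0,1\right]\times\left[0,v_{\max}\right]$ is invariant (which in particular excludes finite-time blow-up and makes the solution global), and then derive uniqueness from a Gronwall estimate on $K$. For the existence part one must check that the right-hand side of $\left(\ref{ObserverODE11}\right)$--$\left(\ref{ObserverODE12}\right)$ is a Carath\'eodory function. Local boundedness and continuity in $\left(\widehat{\theta},\widehat{v}\right)$ away from the switching surfaces $\left\{\widehat{\theta}=0\right\}$, $\left\{\widehat{\theta}=1\right\}$ and $\left\{\widehat{v}=v\left(t\right)\right\}$ are read off the formulas; in particular $\phi_{1}$ vanishes on $\left\{\widehat{v}=v\left(t\right)\right\}$, hence extends continuously there, and on the set $\left\{\phi_{1}\neq0\right\}$ one has $\widehat{v}\geq v\left(t\right)\geq\min_{\left[0,T\right]}v>0$ on each interval $\left[0,T\right]$, which controls the factor $1/\widehat{v}$. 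Measurability in $t$ for fixed $\left(\widehat{\theta},\widehat{v}\right)$ is the only slightly delicate point: the maps $t\mapsto\phi_{i}$ are built from the \emph{known} functions $v\left(\cdot\right),\rho\left(\cdot\right)$ and $d_{t}\rho\left(\cdot\right)$ (the last in $L_{loc}^{1}$ since $\rho$ is absolutely continuous by Proposition \ref{ExistenceOdeModel}), from $\overline{\gamma}$, which has the regularity required by $\left(\ref{Hypothesis_1}\right)$, and from the essentially bounded data $\alpha,u,\eta,\beta,k_{1},k_{2}$; moreover the switching sets $\left\{v\left(t\right)\leq\widehat{v}\right\}$ and $\left\{0<\widehat{\theta}<1\right\}$ are measurable in $t$, so each $\phi_{i}\left(\cdot,\widehat{\theta},\widehat{v}\right)$ is measurable and the Carath\'eodory theorem applies.

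For the invariance of $K$ I would argue componentwise, reusing the max-function trick. For $\widehat{\theta}$: with $\theta_{1}:=\max\left\{0,-\widehat{\theta}\right\}$ and $\theta_{2}:=\max\left\{0,\widehat{\theta}-1\right\}$ one has $\theta_{1}\left(0\right)=\theta_{2}\left(0\right)=0$, and on any interval where $\theta_{i}>0$ the state $\widehat{\theta}$ lies outside $\left]0,1\right[$, so $\phi_{1}=\phi_{2}=0$ and $d_{t}\widehat{\theta}=\alpha\left(t\right)\left(1-w\left(t\right)\widehat{\theta}\right)$; since $w\geq1$ this gives $d_{t}\theta_{1}=-\alpha\left(1+w\theta_{1}\right)\leq0$ and $d_{t}\theta_{2}=\alpha\left(1-w-w\theta_{2}\right)\leq0$, forcing $\theta_{1}\equiv\theta_{2}\equiv0$, i.e. $\widehat{\theta}\left(t\right)\in\left[0,1\right]$. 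For $\widehat{v}$: on $K$, and away from the singularity $\widehat{\theta}=1+\varepsilon$ of $\phi_{3}$, the ``carrying capacity'' $c\left(t\right):=\left(1+\varepsilon-\widehat{\theta}\left(t\right)\right)\eta\left(t\right)v_{\max}$ satisfies $0<c\left(t\right)\leq v_{\max}$ because $\widehat{\theta}\geq0$ and $\eta\leq1/\left(1+\varepsilon\right)$ by $\left(\ref{Hypothesis_5}\right)$; as $\beta\geq0$, equation $\left(\ref{ObserverODE12}\right)$ is logistic in $\widehat{v}$ with $\phi_{3}=1-\widehat{v}/c\left(t\right)>0$ at $\widehat{v}=0$ and $<0$ for $\widehat{v}\geq v_{\max}$, so the same argument applied to $\max\left\{0,-\widehat{v}\right\}$ and $\max\left\{0,\widehat{v}-v_{\max}\right\}$ yields $\widehat{v}\left(t\right)\in\left[0,v_{\max}\right]$. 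Hence the maximal solution is defined on all of $\mathbb{R}_{+}$ and stays in $K$.

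Finally, for uniqueness I would establish a one-sided Lipschitz bound $\left\langle F\left(t,p\right)-F\left(t,q\right),p-q\right\rangle\leq L\left(t\right)\left|p-q\right|^{2}$ for $p,q\in K$, with $L\in L_{loc}^{1}$, and conclude via Gronwall's lemma applied to $t\mapsto\left|p_{1}\left(t\right)-p_{2}\left(t\right)\right|^{2}$. On $K$ and off the switching surfaces the field $F$ is Lipschitz in $\left(\widehat{\theta},\widehat{v}\right)$ with a locally integrable constant ($\phi_{3}$ is affine in $\widehat{v}$ and $\phi_{1}$ has bounded partial derivatives on $\left\{\widehat{v}\geq v\left(t\right)\right\}$, while $\beta$ is Lipschitz in $\widehat{\theta}$ by $\left(\ref{Hypothesis_7}\right)$); across $\left\{\widehat{v}=v\left(t\right)\right\}$ the field is in fact Lipschitz; across $\left\{\widehat{\theta}=1\right\}$ the correction $k_{1}\phi_{1}+k_{2}\phi_{2}$ drops to $0$ as $\widehat{\theta}$ increases, i.e. the $\widehat{\theta}$-field has a \emph{downward} jump there, which is still compatible with a one-sided Lipschitz estimate; and across $\left\{\widehat{\theta}=0\right\}$ the total $\widehat{\theta}$-field stays $\geq\alpha\left(t\right)\left(1-w\left(t\right)\widehat{\theta}\right)\geq0$ on both sides, so trajectories leave the value $0$ monotonically and the problem reduces to the interior, where $F$ is Lipschitz. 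I expect this last point --- checking that the switching surfaces generate only such ``benign'' (downward or transversal) discontinuities, so that the one-sided Lipschitz estimate genuinely holds, and that the Carath\'eodory hypotheses hold despite the piecewise definitions of $\phi_{1}$ and $\phi_{2}$ --- to be the only real obstacle; the remaining steps are routine adaptations of the proof of Proposition \ref{ExistenceOdeModel}.
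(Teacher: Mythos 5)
Your proposal follows the same skeleton as the paper's proof: Carath\'eodory for local existence, the truncation functions $\max\left\{0,-\widehat{\theta}\right\}$, $\max\left\{0,\widehat{\theta}-1\right\}$, $\max\left\{0,-\widehat{v}\right\}$, $\max\left\{0,\widehat{v}-v_{\max}\right\}$ for invariance of $\left[0,1\right]\times\left[0,v_{\max}\right]$ (hence globality), and a Lipschitz/Gronwall argument for uniqueness. Where you differ is that you take the switching surfaces seriously: the paper simply asserts that the field is continuous and then Lipschitz continuous in the state, which is literally false, since $\phi_{1}$ and $\phi_{2}$ jump at $\widehat{\theta}=0$ and $\widehat{\theta}=1$ (e.g.\ $\phi_{1}\to\left(1-v/\widehat{v}\right)\varepsilon\neq0$ as $\widehat{\theta}\to1^{-}$ when $\widehat{v}>v$). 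Your replacement of the Lipschitz condition by a one-sided Lipschitz condition is the right way to make that step honest, and you correctly identify this as the only genuinely delicate point.

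However, your justification of the one-sided estimate is not yet correct at the surfaces. At $\left\{\widehat{\theta}=1\right\}$ the term $k_{1}\phi_{1}$ does jump downward (it is nonnegative below $1$ and zero above), but $k_{2}\phi_{2}$ jumps \emph{upward}: writing $\phi_{2}\left(t,\widehat{\theta}\right)=\left(\overline{\gamma}\left(t,\theta,v,\rho\right)-\overline{\gamma}\left(t,\widehat{\theta},v,\rho\right)\right)\left(1-\rho\right)$ and using the monotonicity in Assumption \ref{Hypothesis_3}, one gets $\phi_{2}\left(t,\widehat{\theta}\right)\leq0$ as $\widehat{\theta}\to1^{-}$ (since $\theta\leq1$), while $\phi_{2}=0$ for $\widehat{\theta}\geq1$. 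So the one-sided Lipschitz inequality as you state it can fail there when $k_{2}>0$; what saves uniqueness at that surface is instead that the field for $\widehat{\theta}\geq1$ reduces to $\alpha\left(1-w\widehat{\theta}\right)\leq0$, so no trajectory crosses $\left\{\widehat{\theta}=1\right\}$ upward and two solutions confined to $\left[0,1\right]$ cannot separate through it. Symmetrically, at $\left\{\widehat{\theta}=0\right\}$ both corrections jump upward ($\phi_{1}\geq0$ and $\phi_{2}\geq0$ near $0^{+}$), and your fix --- ``trajectories leave the value $0$ monotonically'' --- implicitly needs $\alpha$ bounded away from $0$, which is not assumed in this proposition (only in Theorem \ref{ObsLocODE}$\left(i\right)$): if $\alpha$ vanishes on a time interval one is exactly in the classical branching configuration $\dot{x}=0$ for $x\leq0$, $\dot{x}>0$ for $x>0$. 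So the obstacle you flagged is real and not fully closed in your write-up --- nor, for that matter, in the paper's, which sidesteps it by an unjustified Lipschitz claim.
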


\begin{proof}
We first show that if $\left(  \widehat{\theta},\widehat{v}\right)  $ is a
local solution of the system $\left(  \ref{ObserverODE11}\right)  -\left(
\ref{ObserverODE12}\right)  $ then $\left(  \widehat{\theta},\widehat
{v}\right)  $ is valued in $\left[  0,1\right]  \times\left[  0,v_{\max
}\right]  $. Let $\forall t\geq0$, $\theta_{1}=\max\left\{  0,-\widehat
{\theta}\right\}  $, $\theta_{2}=\max\left\{  0,\widehat{\theta}-1\right\}
,v_{1}=\max\left\{  0,-\widehat{v}\right\}  $ and $v_{2}=\max\left\{
0,\widehat{v}-v_{\max}\right\}  $. We have $\theta_{i}\left(  0\right)
=v_{i}\left(  0\right)  =0$, $\forall i\in\left\{  1,2\right\}  $. Without
loss of generality we can assume that the functions $\theta_{i}$ and $v_{i}$
($i\in\left\{  1,2\right\}  $) are positive on an open interval $\left]
0,T\right[  $. Then we have
\begin{align*}
\theta_{1}\left(  t\right)   &  =-\int\nolimits_{0}^{t}\alpha\left(  s\right)
\left(  1+w\left(  s\right)  \theta_{1}\left(  s\right)  \right)
ds-\int\nolimits_{0}^{t}k_{1}\left(  s\right)  \phi_{1}\left(  s,-\theta
_{1}\left(  s\right)  ,\widehat{v}\left(  s\right)  \right)  ds\\
&  -\int\nolimits_{0}^{t}k_{2}\left(  s\right)  \phi_{2}\left(  s,-\theta
_{1}\left(  s\right)  \right)  ds\\
&  \leq0
\end{align*}
and%
\begin{align*}
\theta_{2}\left(  t\right)   &  =\int\nolimits_{0}^{t}\alpha\left(  s\right)
\left(  1-w\left(  s\right)  -w\left(  s\right)  \theta_{2}\left(  s\right)
\right)  ds+\int\nolimits_{0}^{t}k_{1}\left(  s\right)  \phi_{1}\left(
s,1+\theta_{2}\left(  s\right)  ,\widehat{v}\left(  s\right)  \right)  ds\\
&  +\int\nolimits_{0}^{t}k_{2}\left(  s\right)  \phi_{2}\left(  s,1+\theta
_{1}\left(  s\right)  \right)  ds\\
&  \leq0\text{.}%
\end{align*}
That is $\theta_{1}$ and $\theta_{2}$ are identically null and $\widehat
{\theta}$ is $\left[  0,1\right]  $-valued. In the same manner one can show
that $\widehat{v}$ is valued in $\left[  0,v_{\max}\right]  $.

Now let show the existence and the uniqueness of the solution of $\left(
\ref{ObserverODE11}\right)  -\left(  \ref{ObserverODE12}\right)  $. It
suffices to establish existence and uniqueness of a local solution and use the
fact every local solution of $\left(  \ref{ObserverODE11}\right)  -\left(
\ref{ObserverODE12}\right)  $ is bounded to conclude using Theorem 5.7 in
\cite{benzoni}.

Let consider the function $F:\left(  t,x,y\right)  \in%
%TCIMACRO{\U{211d} }%
%BeginExpansion
\mathbb{R}
%EndExpansion
_{+}\times\left[  0,1\right]  \times\left[  0,v_{\max}\right]  \mapsto%
%TCIMACRO{\U{211d} }%
%BeginExpansion
\mathbb{R}
%EndExpansion
^{2}$ defined by
\[
F^{1}\left(  t,x,y,z\right)  =\alpha\left(  t\right)  \left(  1-xw\left(
t\right)  \right)  +k_{1}\phi_{1}\left(  t,x,y\right)  +k_{2}\phi_{2}\left(
t,x\right)  \text{,}%
\]
and%
\[
F^{2}\left(  t,x,y\right)  =\beta\left(  t,x\right)  \phi_{3}\left(
t,x,y\right)  \text{.}%
\]
The function $F$ is measurable with respect to the time $t$ and continuous
with respect to $\left(  x,y\right)  $. Using the Carath\'{e}odory theorem
there is a local solution of $\left(  \ref{ObserverODE11}\right)  -\left(
\ref{ObserverODE12}\right)  $. Moreover, $F$ is Lipschitz continuous with
respect to $\left(  x,y,z\right)  $. Therefore, the solution is unique and
global. Note that the hypothesis \textbf{(}\ref{Hypothesis_7}\textbf{)}
implies that $\forall t\geq0,$ the function $\beta\left(  t,.\right)  $ is
Lipschitz continuous and bounded on the set $\left[  0,1\right]  $.
\end{proof}

Now we state the main results of this subsection. Let define the function
$\delta:%
%TCIMACRO{\U{211d} }%
%BeginExpansion
\mathbb{R}
%EndExpansion
\rightarrow\left\{  0,1\right\}  $ such as
\[
\delta\left(  x\right)  =\left\{
\begin{array}
[c]{l}%
1\text{, }x\in\left]  0,1\right[ \\
0\text{, otherwise}%
\end{array}
\right.  \text{.}%
\]

\begin{theorem}
\label{ObsLocODE}Let consider the system $\left(  \ref{ObserverODE11}\right)
-\left(  \ref{ObserverODE12}\right)  $ and assume that $\widehat{v}\left(
0\right)  =v\left(  0\right)  $.

\begin{enumerate}
\item[$\left(  i\right)  $] If the functions $k_{1}$ and $k_{2}$ are
identically null and
\begin{equation}
\inf\left\{  \alpha\left(  t\right)  ;t>0\right\}  >0
\end{equation}
then $\widehat{\theta}$ is a globally exponentially asymptotically stable
observer for $\theta$.

\item[$\left(  ii\right)  $] If the function $k_{1}$ is identically null and
$k_{2}$ is not identically null then $\widehat{\theta}$ is a a locally
asymptotically stable observers for $\theta$. Moreover, if there is a positive
function $C_{\gamma}\in L_{loc}^{\infty}\left(
%TCIMACRO{\U{211d} }%
%BeginExpansion
\mathbb{R}
%EndExpansion
_{+};%
%TCIMACRO{\U{211d} }%
%BeginExpansion
\mathbb{R}
%EndExpansion
_{+}\right)  $ such that
\begin{equation}
\left\vert \overline{\gamma}\left(  t,y_{1},y_{2},y_{3}\right)  -\overline
{\gamma}\left(  t,z_{1},z_{2},z_{3}\right)  \right\vert \geq C_{\gamma}\left(
t\right)  \left\Vert y-z\right\Vert \label{ConditionCoercive}%
\end{equation}
then $\widehat{\theta}$ is a a globally exponentially asymptotically stable
observers for $\theta$.

\item[$\left(  iii\right)  $] If the function $k_{2}$ is identically null and
$\forall t>0$,
\begin{equation}
\alpha w+k_{1}\delta\left(  \widehat{\theta}\right)  +\frac{k_{1}\delta\left(
\widehat{\theta}\right)  \beta\left(  t,\theta\right)  \left(  \eta v_{\max
}\left(  1+\varepsilon-\theta\right)  -v\right)  }{\alpha\eta vv_{\max}\left(
1-\theta w\right)  }>0\text{.} \label{ConditionStabilityOde1}%
\end{equation}
then $\widehat{\theta}$ is a a locally exponentially asymptotically stable
observers for $\theta$.

\item[$\left(  iv\right)  $] If the functions $k_{1}$ and $k_{2}$ are not
identically null, and $\forall t>0$,
\begin{equation}
\alpha w+k_{1}\delta\left(  \widehat{\theta}\right)  +k_{2}\left(  t\right)
\phi_{2}\left(  t,\widehat{\theta}\right)  +\frac{k_{1}\delta\left(
\widehat{\theta}\right)  \beta\left(  t,\theta\right)  \left(  \eta v_{\max
}\left(  1+\varepsilon-\theta\right)  -v\right)  }{\alpha\eta vv_{\max}\left(
1-w\theta\right)  }>0 \label{ConditionStabilityOde2}%
\end{equation}
then $\widehat{\theta}$ is a a locally exponentially asymptotically stable
observers for $\theta$. Moreover, if the condition $\left(
\ref{ConditionCoercive}\right)  $ is satisfied and
\begin{equation}
k_{2}\left\vert \phi_{2}\left(  t,\widehat{\theta}\right)  \right\vert
>k_{1}\phi_{1}\left(  t,\widehat{\theta},\widehat{v}\right)
\label{ConditionDominance}%
\end{equation}
then $\widehat{\theta}$ is a a globally exponentially asymptotically stable
observers for $\theta$.
\end{enumerate}
\end{theorem}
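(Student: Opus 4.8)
$ $

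The plan is to pass throughout to the observation errors $e_{\theta}:=\theta-\widehat{\theta}$ and $e_{v}:=v-\widehat{v}$ (so $e_{v}(0)=0$ by assumption) and to work with the equations they satisfy, obtained by subtracting $(\ref{ObserverODE11})$--$(\ref{ObserverODE12})$ from $(\ref{ModelIntraHote1})$, $(\ref{ModelIntraHote22})$, $(\ref{ModelIntraHote32})$. The structural fact to exploit is that the intrinsic part of the $\theta$--equation contributes the dissipative term $\alpha(1-w\theta)-\alpha(1-w\widehat{\theta})=-\alpha w\,e_{\theta}$ with $w\geq1$, so the question is whether the correction terms $-k_{1}\phi_{1}$ and $-k_{2}\phi_{2}$ preserve and, under the stated conditions, reinforce this contraction. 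Existence, uniqueness and invariance of $[0,1]\times[0,v_{\max}]$ for the observer are already in Proposition~\ref{ExistenceUniciteOb1}, so only the error estimates remain, which I would obtain from the Lyapunov functional $V:=\tfrac12 e_{\theta}^{2}$, augmented by a coupling term for (iii)--(iv).

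Parts (i) and (ii) are essentially scalar, since for $k_{1}\equiv0$ the auxiliary state $\widehat{v}$ does not act on $\widehat{\theta}$. In (i), $d_{t}e_{\theta}=-\alpha w\,e_{\theta}$ integrates to $|e_{\theta}(t)|=|e_{\theta}(0)|\exp(-\int_{0}^{t}\alpha w)\leq|e_{\theta}(0)|\exp(-t\inf_{s>0}\alpha(s))$, and $|e_{\theta}(0)|\leq1$ gives the stated global exponential decay. In (ii), $d_{t}e_{\theta}=-\alpha w\,e_{\theta}-k_{2}\phi_{2}(t,\widehat{\theta})$; on the range where $\widehat{\theta}\in\left]0,1\right[$ one has $\phi_{2}(t,\widehat{\theta})=\bigl(\overline{\gamma}(t,\theta,v,\rho)-\overline{\gamma}(t,\widehat{\theta},v,\rho)\bigr)(1-\rho)$, and Assumptions $(\ref{Hypothesis_2})$--$(\ref{Hypothesis_3})$ together with $\rho\in[0,1]$ make $e_{\theta}\,\phi_{2}(t,\widehat{\theta})\geq0$. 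Hence $\dot V=-\alpha w\,e_{\theta}^{2}-k_{2}e_{\theta}\phi_{2}\leq0$, with strict decrease wherever $e_{\theta}\neq0$ and $k_{2}>0$, which yields local asymptotic stability by a LaSalle/Barbalat-type argument; and under $(\ref{ConditionCoercive})$ one gets $|\phi_{2}(t,\widehat{\theta})|\geq C_{\gamma}(t)(1-\rho)|e_{\theta}|$, so $\dot V\leq-2k_{2}C_{\gamma}(1-\rho)\,V$ and Gr\"onwall yields the exponential rate.

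For (iii) and (iv) the correction $k_{1}\phi_{1}$ couples $\widehat{\theta}$ to $\widehat{v}$, so I would study the planar error system, most conveniently in the variables $(e_{\theta},\zeta)$ with $\zeta:=\ln(v/\widehat{v})$, since then $\phi_{1}(t,\widehat{\theta},\widehat{v})=(1-e^{\zeta})(1+\varepsilon-\widehat{\theta})$ for $\zeta\leq0$ and $=0$ otherwise, and $\phi_{3}$ depends on $\widehat{v}$ only through $v/\widehat{v}$. The guiding remark is the identity $\beta(t,\theta)\bigl(\eta v_{\max}(1+\varepsilon-\theta)-v\bigr)/(\eta v\,v_{\max})=d_{t}v/v$, which reveals the quotient in $(\ref{ConditionStabilityOde1})$--$(\ref{ConditionStabilityOde2})$ as $d_{t}(\ln v)/d_{t}\theta$; this makes a Lyapunov functional $V=\tfrac12 e_{\theta}^{2}+\lambda\,G(\zeta)$ natural, and computing $\dot V$ along the error dynamics one sees that its quadratic part is negative definite near the origin exactly when the left-hand side of $(\ref{ConditionStabilityOde1})$ (respectively of $(\ref{ConditionStabilityOde2})$ when $k_{2}\not\equiv0$) is positive, giving local exponential asymptotic stability. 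For the global statement in (iv) I would further use the coercivity $(\ref{ConditionCoercive})$ to bound $k_{2}|\phi_{2}|$ from below by a multiple of $\|e_{\theta}\|$, and the dominance condition $(\ref{ConditionDominance})$, $k_{2}|\phi_{2}(t,\widehat{\theta})|>k_{1}\phi_{1}(t,\widehat{\theta},\widehat{v})$, to ensure the good-signed $\phi_{2}$ feedback overrides the one-sided $\phi_{1}$ feedback everywhere, after which the energy estimate closes on all of $[0,1]$.

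The hard part will be this planar analysis in (iii)--(iv). Because $\phi_{1}$ is nonsmooth and is active only on the half-space $\widehat{v}\geq v$, one cannot simply linearise but must argue case by case on the signs of $e_{\theta}$ and of $v-\widehat{v}$, while carrying along the time-dependence of every coefficient in the energy inequality; choosing the coupling weight $\lambda$ and the function $G$ so that the cross terms are absorbed --- and verifying that this is exactly what $(\ref{ConditionStabilityOde1})$--$(\ref{ConditionStabilityOde2})$ encode --- is where the real work lies. The reason (ii)--(iv) are only \emph{local} in general is the need to keep $\theta,\widehat{\theta}$ strictly inside $\left]0,1\right[$ (so that $\delta(\widehat{\theta})=1$) and $\rho$ strictly below $1$ over the relevant interval of time.
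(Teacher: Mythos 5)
Your treatment of parts $\left(i\right)$ and $\left(ii\right)$ is correct and coincides with the paper's: for $\left(i\right)$ the error equation $d_{t}\widehat{e}=-\alpha w\widehat{e}$ integrates directly, and for $\left(ii\right)$ the paper likewise rewrites $\phi_{2}(t,\widehat{\theta})=\bigl(\overline{\gamma}(t,\theta,v,\rho)-\overline{\gamma}(t,\widehat{\theta},v,\rho)\bigr)(1-\rho)$, uses the monotonicity of $\overline{\gamma}$ in its first state argument to conclude $\widehat{e}\,\phi_{2}\geq0$, and invokes $\left(\ref{ConditionCoercive}\right)$ for the exponential rate. (Both you and the paper are equally loose about upgrading $d_{t}\widehat{e}^{2}\leq0$ to actual asymptotic convergence in the first half of $\left(ii\right)$, so I will not press that point.)

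For parts $\left(iii\right)$ and $\left(iv\right)$, however, your proposal stops exactly where the proof has to begin. You announce a planar Lyapunov functional $V=\tfrac12 e_{\theta}^{2}+\lambda G(\zeta)$ with $\zeta=\ln(v/\widehat{v})$, but you never choose $G$ or $\lambda$, never write the $\dot{V}$ inequality, and never verify that negativity of its quadratic part is equivalent to $\left(\ref{ConditionStabilityOde1}\right)$ or $\left(\ref{ConditionStabilityOde2}\right)$ --- indeed you explicitly defer this as "where the real work lies." That deferred verification is the entire content of these two items, so as written the argument is incomplete. For comparison, the paper does not set up a planar energy at all: it exploits the algebraic identity $\beta(s,\theta)/\bigl(\beta(s,\theta)-d_{t}v\bigr)=\eta v_{\max}(1+\varepsilon-\theta)/v$, differentiates it in $\theta$, and Taylor-expands $v/\widehat{v}$ to first order in $\widehat{e}$, which slaves $\widehat{v}$ to $\widehat{\theta}$ (replacing $\partial_{\theta}v$ by $d_{t}v/d_{t}\theta$) and collapses the error dynamics to a single approximate scalar ODE
\[
d_{t}\widehat{e}\approx-\Bigl(\alpha w+k_{1}\delta(\widehat{\theta})+\tfrac{k_{1}\delta(\widehat{\theta})\beta(t,\theta)\left(\eta v_{\max}(1+\varepsilon-\theta)-v\right)}{\alpha\eta v v_{\max}(1-\theta w)}\Bigr)\widehat{e},
\]
from which $\left(\ref{ConditionStabilityOde1}\right)$ is read off as the sign condition on the decay coefficient; part $\left(iv\right)$ then just adds the $\phi_{2}$ term from $\left(ii\right)$. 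Your route could in principle be made to work and would arguably be more rigorous than the paper's formal linearisation (which silently discards the $\widehat{e}\,\mathcal{O}(\widehat{e})$ remainder and the nonsmoothness of $\phi_{1}$ across $\{v=\widehat{v}\}$), but until the coupling weight is exhibited and the cross terms are actually absorbed, parts $\left(iii\right)$ and $\left(iv\right)$ remain unproved. Also note a small slip: the quotient in $\left(\ref{ConditionStabilityOde1}\right)$ is $(1+\varepsilon-\theta)\,d_{t}(\ln v)/d_{t}\theta$, not $d_{t}(\ln v)/d_{t}\theta$ as you state.
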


\begin{proof}
Let $\widehat{e}=\theta-\widehat{\theta}$ be the error of estimation.

\begin{enumerate}
\item[$\left(  i\right)  $] The error $\widehat{e}$ satisfies
\begin{equation}
d_{t}\widehat{e}=-\alpha\left(  t\right)  w\left(  t\right)  \widehat{e}
\label{EqErrorNat}%
\end{equation}
and
\[
\widehat{e}\left(  t\right)  =\exp\left(  -\int\nolimits_{0}^{t}\alpha\left(
s\right)  w\left(  s\right)  ds\right)  \widehat{e}\left(  0\right)  \text{.}%
\]
The result follows.

\item[$\left(  ii\right)  $] The error $\widehat{e}$ satisfies
\begin{align}
d_{t}\widehat{e}^{2}  &  =-2\alpha\left(  t\right)  w\left(  t\right)
\widehat{e}^{2}\left(  t\right)  -2k_{2}\left(  t\right)  \phi_{2}\left(
t,\widehat{\theta}\left(  t\right)  \right)  \widehat{e}\left(  t\right) \\
&  =-\alpha\left(  t\right)  w\left(  t\right)  \widehat{e}^{2}\left(
t\right)  -2k_{2}\left(  t\right)  \left(  1-\rho\left(  t\right)  \right)
\left(  \overline{\gamma}\left(  t,\theta\left(  t\right)  ,v\left(  t\right)
,\rho\left(  t\right)  \right)  -\overline{\gamma}\left(  t,\widehat{\theta
}\left(  t\right)  ,v\left(  t\right)  ,\rho\left(  t\right)  \right)
\right)  \widehat{e}\left(  t\right) \nonumber\\
&  \leq-\alpha\left(  t\right)  w\left(  t\right)  \widehat{e}^{2}\left(
t\right) \nonumber
\end{align}
and
\[
\widehat{e}^{2}\left(  t\right)  \leq\exp\left(  -\int\nolimits_{0}^{t}%
\alpha\left(  s\right)  w\left(  s\right)  ds\right)  \widehat{e}^{2}\left(
0\right)  \text{.}%
\]
Moreover, if the condition $\left(  \ref{ConditionCoercive}\right)  $ is
satisfied then
\begin{equation}
d_{t}\widehat{e}^{2}\leq-\alpha\left(  t\right)  w\left(  t\right)
\widehat{e}^{2}\left(  t\right)  -2k_{2}\left(  t\right)  \left(
1-\rho\left(  t\right)  \right)  C_{\gamma}\left(  t\right)  \widehat{e}%
^{2}\left(  t\right) \nonumber
\end{equation}
and
\[
\widehat{e}^{2}\left(  t\right)  \leq\exp\left(  -\int\nolimits_{0}^{t}\left(
\alpha\left(  s\right)  w\left(  s\right)  +2k_{2}\left(  s\right)  \left(
1-\rho\left(  s\right)  \right)  C_{\gamma}\left(  s\right)  \right)
ds\right)  \widehat{e}^{2}\left(  0\right)  \text{.}%
\]

\item[$\left(  iii\right)  $] We have%
\begin{align*}
\partial_{\theta}\frac{\beta\left(  s,\theta\right)  }{\beta\left(
s,\theta\right)  -d_{t}v}  &  =\frac{d_{t}v\partial_{\theta}\beta\left(
s,\theta\right)  -\beta\left(  s,\theta\right)  \partial_{\theta}d_{t}%
v}{\left(  \beta\left(  s,\theta\right)  -d_{t}v\right)  ^{2}}\\
&  =\frac{\beta^{2}\left(  s,\theta\right)  \left(  \left(  1+\varepsilon
-\theta\right)  \partial_{\theta}v+v\right)  }{\eta v_{\max}\left(
1+\varepsilon-\theta\right)  ^{2}\left(  \beta\left(  s,\theta\right)
-d_{t}v\right)  ^{2}}%
\end{align*}
and at the neighborhood of $0$,
\begin{align*}
\frac{\beta\left(  s,\widehat{\theta}\right)  }{\beta\left(  s,\widehat
{\theta}\right)  -d_{t}\widehat{v}}  &  =\frac{\beta\left(  s,\theta\right)
}{\left(  \beta\left(  s,\theta\right)  -d_{t}v\right)  }-\frac{\beta
^{2}\left(  s,\theta\right)  \left(  \left(  1+\varepsilon-\theta\right)
\partial_{\theta}v+v\right)  }{\eta v_{\max}\left(  1+\varepsilon
-\theta\right)  ^{2}\left(  \beta\left(  s,\theta\right)  -d_{t}v\right)
^{2}}\widehat{e}+\widehat{e}\mathcal{O}\left(  \widehat{e}\right) \\
&  =\frac{\eta v_{\max}\left(  1+\varepsilon-\theta\right)  }{v}-\frac{\eta
v_{\max}}{v}\widehat{e}-\frac{\eta v_{\max}\left(  1+\varepsilon
-\theta\right)  \partial_{\theta}v}{v^{2}}\widehat{e}+\widehat{e}%
\mathcal{O}\left(  \widehat{e}\right)
\end{align*}
It follows that
\begin{align*}
\frac{v}{\widehat{v}}  &  =\frac{v}{\eta v_{\max}\left(  1+\varepsilon
-\theta\right)  }\frac{\beta\left(  s,\widehat{\theta}\right)  }{\beta\left(
s,\widehat{\theta}\right)  -d_{t}\widehat{v}}\\
&  =1-\left(  \frac{1}{1+\varepsilon-\theta}+\frac{\partial_{\theta}v}%
{v}\right)  \widehat{e}-\frac{v\widehat{e}}{\eta v_{\max}\left(
1+\varepsilon-\theta\right)  }\mathcal{O}\left(  \widehat{e}\right)  \text{.}%
\end{align*}
finally,
\begin{align*}
d_{t}\widehat{e}  &  =-\alpha\left(  t\right)  w\left(  t\right)  \widehat
{e}-k_{1}\phi_{1}\left(  t,\widehat{\theta},\widehat{v}\right) \\
&  =-\frac{k_{1}\left(  t\right)  \delta\left(  \widehat{\theta}\right)
\left(  1+\varepsilon-\theta\right)  \partial_{\theta}v}{v}\widehat{e}-\left(
\alpha\left(  t\right)  w\left(  t\right)  -k_{1}\left(  t\right)
\delta\left(  \widehat{\theta}\right)  \right)  \widehat{e}\\
&  \ \ \ \ -\frac{k_{1}\left(  t\right)  \delta\left(  \widehat{\theta
}\right)  v}{\eta v_{\max}}\widehat{e}\mathcal{O}\left(  \widehat{e}\right) \\
&  \approx-\frac{k_{1}\left(  t\right)  \delta\left(  \widehat{\theta}\right)
\beta\left(  t,\theta\right)  \left(  \eta v_{\max}\left(  1+\varepsilon
-\theta\right)  -v\right)  }{\alpha\eta vv_{\max}\left(  1-\theta w\right)
}\widehat{e}-\left(  \alpha\left(  t\right)  w\left(  t\right)  +k_{1}\left(
t\right)  \delta\left(  \widehat{\theta}\right)  \right)  \widehat{e}%
\end{align*}
The given approximmation on the dynamics of $\widehat{e}$ shows clearly that
$\widehat{\theta}$ is a locally exponentially asymptotically stable observers
for $\theta$ if the condition $\left(  \ref{ConditionStabilityOde1}\right)  $
is satified.

\item[$\left(  iv\right)  $] With the same arguments developped in $\left(
ii\right)  $ and $\left(  iii\right)  $ we easily get that $\widehat{\theta}$
is a locally exponentially asymptotically stable observers for $\theta$. Using
the conditions $\left(  \ref{ConditionCoercive}\right)  $ and $\left(
\ref{ConditionDominance}\right)  $ we are sure that even the behaviour of the
term with $\phi_{1}$ tends to introduce unstability, the stability is
maintained by the term with $\phi_{2}$.
\end{enumerate}
\end{proof}

\subsection{Numerical evaluation of the
observers\label{ObserversEvalutionNum1}}

This section is devoted to numerical simulations in order to check the
effectiveness of the observers we studied theoretically in the subsection
\ref{SubsectionDesignTheoric1}. The Theorem \ref{ObsLocODE} guarantees
asymptotical stability of the observers, but pratically the convergence is
needed in a finite time. Since the coffee cultivation campaign is a cyle of
one year, our simulations will cover the year. Parameters are taken following
\cite{fotsa}. The control strategy $u$ is taken with many variations in order
to study their impact on the.performances of the observer. For every time
$t\geq0$, $u$ is given by
\begin{equation}
u(t)=\sin^{2}\left(  \omega_{1}\left(  t-\varphi_{1}\right)  ^{2}\right)
\exp\left(  -\omega_{2}\left(  t-\varphi_{2}\right)  ^{2}\right)  \text{.}%
\end{equation}
The functions $\alpha,\beta$ and $\gamma$ are taken with the following form
\begin{equation}
\alpha\left(  t\right)  =p_{1}\left(  t\right)  +b_{1}\left(  1-\cos\left(
c_{1}t\right)  \right)  \left(  t-d_{1}\right)  ^{2}\text{, }\forall t\in%
%TCIMACRO{\U{211d} }%
%BeginExpansion
\mathbb{R}
%EndExpansion
_{+}\text{,}%
\end{equation}%
\begin{equation}
\beta\left(  t,x\right)  =b_{2}\left(  1-\cos\left(  c_{2}t\right)  \right)
\left(  t-d_{2}\right)  ^{2}p_{2}\left(  x\right)  \text{, }\forall\left(
t,x\right)  \in%
%TCIMACRO{\U{211d} }%
%BeginExpansion
\mathbb{R}
%EndExpansion
_{+}\times%
%TCIMACRO{\U{211d} }%
%BeginExpansion
\mathbb{R}
%EndExpansion
\text{,}%
\end{equation}
and
\begin{equation}
\gamma\left(  t,x_{1},x_{2},x_{3}\right)  =b_{3}\left(  1-\cos\left(
c_{3}t\right)  \right)  \left(  t-d_{3}\right)  ^{2}\left(  x_{1}-\kappa
x_{3}\right)  x_{2}\text{, }\forall\left(  t,x\right)  \in%
%TCIMACRO{\U{211d} }%
%BeginExpansion
\mathbb{R}
%EndExpansion
_{+}\times%
%TCIMACRO{\U{211d} }%
%BeginExpansion
\mathbb{R}
%EndExpansion
^{3}\text{.}%
\end{equation}
$p_{1}$ is a nonnegative function of the time $t$ and $p_{2}$ is a real
positive function of $x.$ $\forall i\in\left\{  1,2,3\right\}  ,$ $b_{i}$,
$c_{i}$, and $d_{i}$ are positive coefficients corresponding respectively to
the maximal amplitude, the pulsation and the global maximun of $\alpha,\beta$
and $\gamma$. $\kappa$ is a positive constant regulating the evolution of the
rot volume with respect to the inhibition rate. The terms $1-cos(\left(
c_{i}t\right)  $ represent the seasonality probably due to climatic and
environmental variations.

Theoretically the functions $k_{1}$ and $k_{2}$ might take abitrary positive
numbers and we can set them constant without loss of generality. However, for
the stability of the numerical scheme we were constrained to set them less
than $1/10\Delta t$. Here $\Delta t$ denotes the constant time step. The
initial conditions are taken such as $\theta\left(  0\right)  \in\left\{
0.05,0.25,0.50,0.75\right\}  $, $v\left(  0\right)  \in\left\{
0,0.25,0.50,0.75\right\}  $, $\rho\left(  0\right)  \in\left\{
0.25,0.50,0.75\right\}  ,$ $\widehat{\theta}\left(  0\right)  =0$,
$\widehat{v}\left(  0\right)  =v\left(  0\right)  $. Indeed, at the beginning
the fruits are small and the inhibition rate is relatively small.

The following table gives the assumed parameters values.

\begin{table}[ptbh]
\centering%
\begin{tabular}
[c]{|c|c|c|c|c|c|}\hline
\textbf{Parameters} & \textbf{Values} & \textbf{Source} & \textbf{Parameters}
& \textbf{Values} & \textbf{Source}\\\hline
$b_{1}$ & $5\ln\left(  10\right)  $ & Assumed & $p_{1}\left(  t\right)  $ &
$0$ & Assumed\\
$b_{2}$ & $v_{\max}\ln\left(  10^{5}v_{\max}\left(  1-\varepsilon\eta^{\ast
}\right)  \right)  /2$ & Assumed & $p_{2}\left(  x\right)  $ & $2-x$ or
$\left(  2-x\right)  ^{2}$ & Assumed\\
$b_{3}$ & $v_{\max}\ln(10^{5}v_{\max})$ & Assumed & $\kappa$ & $1$ & Assumed\\
$d_{1}$ & $7.5\times10^{-1}$ & \cite{fotsa} & $v_{\max}$ & $1$ $cm^{2}$ &
Assumed\\
$d_{2}$ & $7.5\times10^{-1}$ & \cite{fotsa} & $\varepsilon$ & $10^{-4}$ &
Assumed\\
$d_{3}$ & $7.5\times10^{-1}$ & \cite{fotsa} & $\eta\left(  t\right)  $ &
$1/\left(  1+\varepsilon\right)  $ $cm^{2}$ & Assumed\\
$c_{1}$ & $10\pi$ & \cite{fotsa} & $\sigma$ & $0.9$ & \cite{fotsa}\\
$c_{2}$ & $10\pi$ & \cite{fotsa} & $k_{1}$ & $0$ or $10^{3}$ & Assumed\\
$c_{3}$ & $10\pi$ & \cite{fotsa} & $k_{2}$ & $0$ or $10^{3}$ & Assumed\\
$\omega_{1}$ & $25\pi$ & Assumed & $\Delta t$ & $10^{-4}$ & Assumed\\
$\omega_{2}$ & $10$ & Assumed & $\varphi_{1}$ & $0.6$ & Assumed\\
$\varphi_{1}$ & $0.4$ & Assumed &  &  & \\\hline
\end{tabular}
%title of Table
\caption{Simulation parameters for the non spatial model}%
\label{TableParam1}%
\end{table}

In the remainder of the subsection we present simulations of the observers and
their estimation relative errors dynamics. The relative error seems to be a
good mean to evaluate the performance. For each scenario we display two groups
figures. The first one represents the dynamics both of the inhibition rate and
observers corresponding to each values of $\rho\left(  0\right)  \leq
\theta\left(  0\right)  $. The second group of figures shows relatives errors
of observers corresponding to each values of $\rho\left(  0\right)  \leq
\theta\left(  0\right)  $.\newpage

\subsubsection{Simulations with $\theta\left(  0\right)  =v\left(  0\right)
=0.05$}

The figures \ref{figInh11} and \ref{figErr11} show the behaviour of the
observers when the observation is started early. As we can see the global
performance of each observer is relatively good but the case $k_{1}=0$ and
$k_{2}=10^{3}$ seems to be the better one.

\begin{figure}[ptbh]
\centering%
\begin{tabular}
[c]{cc}%
\raisebox{-0cm}{\includegraphics[
natheight=12.171500cm,
natwidth=16.138599cm,
height=6.1549cm,
width=8.1385cm
]{./EstimInhRat0011.jpg}} & \raisebox{-0cm}{\includegraphics[
natheight=12.171500cm,
natwidth=16.138599cm,
height=6.1549cm,
width=8.1385cm
]{./EstimInhRat0111.jpg}}\\
$k_{1}=k_{2}=0$ & $k_{1}=0$ and $k_{2}=10^{3}$\\
\raisebox{-0cm}{\includegraphics[
natheight=12.171500cm,
natwidth=16.138599cm,
height=6.1549cm,
width=8.1385cm
]{./EstimInhRat1011.jpg}} & \raisebox{-0cm}{\includegraphics[
natheight=12.171500cm,
natwidth=16.138599cm,
height=6.1549cm,
width=8.1385cm
]{./EstimInhRat1111.jpg}}\\
$k_{1}=10^{3}$ and $k_{2}=0$ & $k_{1}=10^{3}$ and $k_{2}=10^{3}$%
\end{tabular}
%title of Figure
\caption{Inhibition rate and observers for $\theta\left(  0\right)  =v\left(
0\right)  =0.05$.}%
\label{figInh11}%
\end{figure}

\begin{figure}[ptbh]
\centering%
\begin{tabular}
[c]{cc}%
\raisebox{-0cm}{\includegraphics[
natheight=12.171500cm,
natwidth=16.138599cm,
height=6.1527cm,
width=8.1385cm
]{./ReAbsErr0011.jpg}} & \raisebox{-0cm}{\includegraphics[
natheight=12.171500cm,
natwidth=16.138599cm,
height=6.1527cm,
width=8.1385cm
]{./ReAbsErr0111.jpg}}\\
$k_{1}=k_{2}=0$ & $k_{1}=0$ and $k_{2}=10^{3}$\\
\raisebox{-0cm}{\includegraphics[
natheight=12.171500cm,
natwidth=16.138599cm,
height=6.1527cm,
width=8.1385cm
]{./ReAbsErr1011.jpg}} & \raisebox{-0cm}{\includegraphics[
natheight=12.171500cm,
natwidth=16.138599cm,
height=6.1527cm,
width=8.1385cm
]{./ReAbsErr1111.jpg}}\\
$k_{1}=10^{3}$ and $k_{2}=0$ & $k_{1}=10^{3}$ and $k_{2}=10^{3}$%
\end{tabular}
%title of Figure
\caption{Relative absolute estimation error for $\theta\left(  0\right)
=v\left(  0\right)  =0.05$.}%
\label{figErr11}%
\end{figure}\newpage

\subsubsection{Simulations with $\theta\left(  0\right)  =0.05$ and $v\left(
0\right)  =0.5$}

The figures \ref{figInh13} and \ref{figErr13} correspond to an observation
starting at the beginning of the disease but with a well developped berry. We
have better performances when $k_{2}=10^{3}$. With $k_{1}=10^{3}$ and
$k_{2}=0$ we notice an unstability behaviour in figure \ref{figErr13} although
the error takes the null value. That may be due to the constant sign of the
function $\phi_{1}$.

\begin{figure}[ptbh]
\centering%
\begin{tabular}
[c]{cc}%
\raisebox{-0cm}{\includegraphics[
natheight=12.171500cm,
natwidth=16.138599cm,
height=6.1549cm,
width=8.1385cm
]{./EstimInhRat0013.jpg}} & \raisebox{-0cm}{\includegraphics[
natheight=12.171500cm,
natwidth=16.138599cm,
height=6.1549cm,
width=8.1385cm
]{./EstimInhRat0113.jpg}}\\
$k_{1}=k_{2}=0$ & $k_{1}=0$ and $k_{2}=10^{3}$\\
\raisebox{-0cm}{\includegraphics[
natheight=12.171500cm,
natwidth=16.138599cm,
height=6.1549cm,
width=8.1385cm
]{./EstimInhRat1013.jpg}} & \raisebox{-0cm}{\includegraphics[
natheight=12.171500cm,
natwidth=16.138599cm,
height=6.1549cm,
width=8.1385cm
]{./EstimInhRat1113.jpg}}\\
$k_{1}=10^{3}$ and $k_{2}=0$ & $k_{1}=10^{3}$ and $k_{2}=10^{3}$%
\end{tabular}
%title of Figure
\caption{Inhibition rate and observers for $\theta\left(  0\right)  =0.05$ and
$v\left(  0\right)  =0.5$.}%
\label{figInh13}%
\end{figure}

\begin{figure}[ptbh]
\centering%
\begin{tabular}
[c]{cc}%
\raisebox{-0cm}{\includegraphics[
natheight=12.171500cm,
natwidth=16.138599cm,
height=6.1549cm,
width=8.1385cm
]{./ReAbsErr0013.jpg}} & \raisebox{-0cm}{\includegraphics[
natheight=12.171500cm,
natwidth=16.138599cm,
height=6.1549cm,
width=8.1385cm
]{./ReAbsErr0113.jpg}}\\
$k_{1}=k_{2}=0$ & $k_{1}=0$ and $k_{2}=10^{3}$\\
\raisebox{-0cm}{\includegraphics[
natheight=12.171500cm,
natwidth=16.138599cm,
height=6.1549cm,
width=8.1385cm
]{./ReAbsErr1013.jpg}} & \raisebox{-0cm}{\includegraphics[
natheight=12.171500cm,
natwidth=16.138599cm,
height=6.1549cm,
width=8.1385cm
]{./ReAbsErr1113.jpg}}\\
$k_{1}=10^{3}$ and $k_{2}=0$ & $k_{1}=10^{3}$ and $k_{2}=10^{3}$%
\end{tabular}
%title of Figure
\caption{Relative absolute estimation error for $\theta\left(  0\right)
=0.05$ and $v\left(  0\right)  =0.5$.}%
\label{figErr13}%
\end{figure}\newpage

\subsubsection{Simulations with $\theta\left(  0\right)  =0.75$ and $v\left(
0\right)  =0.05$}

In the figures \ref{figInh41} and \ref{figErr41} the inhibition rate is
already high while the berry is little. As expected the worst estimation
corresponds to $k_{1}=k_{2}=0$. The best estimations are still made when
$k_{2}=10^{3}$. However, when $k_{1}=10^{3}$, the same unstability behaviour
in figure \ref{figErr13} appears. Note that the performances of the observers
are better with $\rho\left(  0\right)  $ small.

\begin{figure}[ptbh]
\centering%
\begin{tabular}
[c]{cc}%
\raisebox{-0cm}{\includegraphics[
natheight=12.171500cm,
natwidth=16.138599cm,
height=6.1549cm,
width=8.1385cm
]{./EstimInhRat0041.jpg}} & \raisebox{-0cm}{\includegraphics[
natheight=12.171500cm,
natwidth=16.138599cm,
height=6.1549cm,
width=8.1385cm
]{./EstimInhRat0141.jpg}}\\
$k_{1}=k_{2}=0$ & $k_{1}=0$ and $k_{2}=10^{3}$\\
\raisebox{-0cm}{\includegraphics[
natheight=12.171500cm,
natwidth=16.138599cm,
height=6.1549cm,
width=8.1385cm
]{./EstimInhRat1041.jpg}} & \raisebox{-0cm}{\includegraphics[
natheight=12.171500cm,
natwidth=16.138599cm,
height=6.1549cm,
width=8.1385cm
]{./EstimInhRat1141.jpg}}\\
$k_{1}=10^{3}$ and $k_{2}=0$ & $k_{1}=10^{3}$ and $k_{2}=10^{3}$%
\end{tabular}
%title of figure
\caption{Inhibition rate and observers for $\theta\left(  0\right)  =0.75$ and
$v\left(  0\right)  =0.05$.}%
\label{figInh41}%
\end{figure}

\begin{figure}[ptbh]
\centering%
\begin{tabular}
[c]{cc}%
\raisebox{-0cm}{\includegraphics[
natheight=12.171500cm,
natwidth=16.138599cm,
height=6.1527cm,
width=8.1385cm
]{./ReAbsErr0041.jpg}} & \raisebox{-0cm}{\includegraphics[
natheight=12.171500cm,
natwidth=16.138599cm,
height=6.1527cm,
width=8.1385cm
]{./ReAbsErr0141.jpg}}\\
$k_{1}=k_{2}=0$ & $k_{1}=0$ and $k_{2}=10^{3}$\\
\raisebox{-0cm}{\includegraphics[
natheight=12.171500cm,
natwidth=16.138599cm,
height=6.1527cm,
width=8.1385cm
]{./ReAbsErr1041.jpg}} & \raisebox{-0cm}{\includegraphics[
natheight=12.171500cm,
natwidth=16.138599cm,
height=6.1527cm,
width=8.1385cm
]{./ReAbsErr1141.jpg}}\\
$k_{1}=10^{3}$ and $k_{2}=0$ & $k_{1}=10^{3}$ and $k_{2}=10^{3}$%
\end{tabular}
%title of Table
\caption{Relative absolute estimation error for $\theta\left(  0\right)
=0.75$ and $v\left(  0\right)  =0.05$. }%
\label{figErr41}%
\end{figure}\newpage

\subsubsection{Simulations with $\theta\left(  0\right)  =0.75$ and $v\left(
0\right)  =0.5$}

For the figures \ref{figInh43} and \ref{figErr43} the inhibition rate is
already high and the fruit is relatively mature when the observation starts.
The worst case are still when $k_{2}=0$ and the best are with $k_{2}=10^{3}$.
Again the performances of the observers are better with $\rho\left(  0\right)
$ small. It seems that starting observation with $\theta\left(  0\right)  $
small and $v\left(  0\right)  $ big guarantees better results.

\begin{figure}[ptbh]
\centering%
\begin{tabular}
[c]{cc}%
\raisebox{-0cm}{\includegraphics[
natheight=12.171500cm,
natwidth=16.138599cm,
height=6.1549cm,
width=8.1385cm
]{./EstimInhRat0043.jpg}} & \raisebox{-0cm}{\includegraphics[
natheight=12.171500cm,
natwidth=16.138599cm,
height=6.1549cm,
width=8.1385cm
]{./EstimInhRat0143.jpg}}\\
$k_{1}=k_{2}=0$ & $k_{1}=0$ and $k_{2}=10^{3}$\\
\raisebox{-0cm}{\includegraphics[
natheight=12.171500cm,
natwidth=16.138599cm,
height=6.1549cm,
width=8.1385cm
]{./EstimInhRat1043.jpg}} & \raisebox{-0cm}{\includegraphics[
natheight=12.171500cm,
natwidth=16.138599cm,
height=6.1549cm,
width=8.1385cm
]{./EstimInhRat1143.jpg}}\\
$k_{1}=10^{3}$ and $k_{2}=0$ & $k_{1}=10^{3}$ and $k_{2}=10^{3}$%
\end{tabular}
%title of figure
\caption{Inhibition rate and observers for $\theta\left(  0\right)  =0.75$ and
$v\left(  0\right)  =0.5$.}%
\label{figInh43}%
\end{figure}

\begin{figure}[ptbh]
\centering%
\begin{tabular}
[c]{cc}%
\raisebox{-0cm}{\includegraphics[
natheight=12.171500cm,
natwidth=16.138599cm,
height=6.1527cm,
width=8.1385cm
]{./ReAbsErr0043.jpg}} & \raisebox{-0cm}{\includegraphics[
natheight=12.171500cm,
natwidth=16.138599cm,
height=6.1527cm,
width=8.1385cm
]{./ReAbsErr0143.jpg}}\\
$k_{1}=k_{2}=0$ & $k_{1}=0$ and $k_{2}=10^{3}$\\
\raisebox{-0cm}{\includegraphics[
natheight=12.171500cm,
natwidth=16.138599cm,
height=6.1527cm,
width=8.1385cm
]{./ReAbsErr1043.jpg}} & \raisebox{-0cm}{\includegraphics[
natheight=12.171500cm,
natwidth=16.138599cm,
height=6.1527cm,
width=8.1385cm
]{./ReAbsErr1143.jpg}}\\
$k_{1}=10^{3}$ and $k_{2}=0$ & $k_{1}=10^{3}$ and $k_{2}=10^{3}$%
\end{tabular}
%title of figure
\caption{Relative absolute estimation error for $\theta\left(  0\right)
=0.75$ and $v\left(  0\right)  =0.5$.}%
\label{figErr43}%
\end{figure}\newpage

\section{ Observation for the spatial model\label{SectionObserverPDE}}

\qquad The aim of this section is similar to the previous section. We design
observers for the spatial dynamical system $\left(  \ref{ModelSpace1}\right)
-\left(  \ref{ModelSpace3}\right)  ,$ $\left(  \ref{ModelSpace6}\right)  ,$
$\left(  \ref{ModelSpace42}\right)  $ and $\left(  \ref{ModelSpace52}\right)
$ in order to estimate the inhibition rate $\theta$ and we improve them using
computer simulations.

\subsection{Theoretical design of the spatial model
observers\label{SubsectionDesignTheoric2}}

Let consider the following systems : $\forall t\geq0,\forall x\in\overline{U}%
$,
\begin{align}
\partial_{t}\widehat{\theta}  &  =\alpha\left(  t,x\right)  \left(  1-w\left(
t,x\right)  \widehat{\theta}\right)  +K_{1}\left(  t,x\right)  \Phi_{1}\left(
t,x,\widehat{\theta},\widehat{v}\right) \label{ObserverPDE11}\\
&  +K_{2}\left(  t,x\right)  \Phi_{2}\left(  t,x,\widehat{\theta}\right)
+\operatorname{div}\left(  A\left(  t,x\right)  \nabla\widehat{\theta}\right)
,\text{ on }%
%TCIMACRO{\U{211d} }%
%BeginExpansion
\mathbb{R}
%EndExpansion
_{+}^{\ast}\times U\nonumber
\end{align}%
\begin{equation}
\left\langle A\left(  t,x\right)  \nabla\widehat{\theta}_{1}\left(
t,x\right)  ,n\left(  x\right)  \right\rangle =0,\text{ on }%
%TCIMACRO{\U{211d} }%
%BeginExpansion
\mathbb{R}
%EndExpansion
_{+}^{\ast}\times\partial U \label{ObserverPDE12}%
\end{equation}%
\begin{equation}
\partial_{t}\widehat{v}=\beta\left(  t,x,\widehat{\theta}\right)  \Phi
_{3}\left(  t,x,\widehat{\theta},\widehat{v}\right)  \label{ObserverPDE13}%
\end{equation}

where
\[
\Phi_{1}\left(  t,x,y,z\right)  =\left\{
\begin{array}
[c]{l}%
\left(  1-v\left(  t,x\right)  /z\right)  \left(  1+\varepsilon-y\right)
\text{, if }v\left(  t,x\right)  \leq y\text{ and }\left(  x,y,z\right)
\in\overline{U}\times\left]  0,1\right[  \times%
%TCIMACRO{\U{211d} }%
%BeginExpansion
\mathbb{R}
%EndExpansion
_{+}^{\ast}\\
0\text{, otherwise}%
\end{array}
\right.  ,
\]%
\[
\Phi_{2}\left(  t,x,y\right)  =\left\{
\begin{array}
[c]{l}%
\partial_{t}\rho\left(  t,x\right)  -\overline{\gamma}\left(  t,y,v\left(
t,x\right)  ,\rho\left(  t,x\right)  \right)  \left(  1-\rho\left(
t,x\right)  \right)  \text{, if }\left(  x,y\right)  \in\overline{U}%
\times\left]  0,1\right[ \\
0\text{, otherwise}%
\end{array}
\right.
\]
and
\[
\Phi_{3}\left(  t,x,y,z\right)  =1-\frac{z}{\left(  1+\varepsilon-y\right)
\eta\left(  t,x\right)  v_{\max}}\text{.}%
\]

We make the following assumption:

\begin{claim}
\label{Hypothesis_16}$K_{1},K_{2}\in L_{loc}^{\infty}\left(
%TCIMACRO{\U{211d} }%
%BeginExpansion
\mathbb{R}
%EndExpansion
_{+};L^{\infty}\left(  U;%
%TCIMACRO{\U{211d} }%
%BeginExpansion
\mathbb{R}
%EndExpansion
_{+}\right)  \right)  $.
\end{claim}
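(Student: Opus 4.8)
The plan is to establish the asserted membership by verifying that the observer gains $K_{1},K_{2}$ can legitimately be taken in the space $L_{loc}^{\infty}(\mathbb{R}_{+};L^{\infty}(U;\mathbb{R}_{+}))$, which amounts to producing a nonempty admissible class and checking the two defining conditions: (a) for almost every fixed $t\geq0$ the section $x\mapsto K_{i}(t,x)$ is nonnegative and essentially bounded on $U$, and (b) for every finite horizon $T>0$ the scalar map $t\mapsto\|K_{i}(t,\cdot)\|_{L^{\infty}(U)}$ is measurable and essentially bounded on $[0,T]$. Since the $K_{i}$ are free design parameters of the observer \eqref{ObserverPDE11}--\eqref{ObserverPDE13}, it suffices to exhibit functions meeting these two requirements and to confirm the required regularity of the iterated supremum.

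First I would treat constant gains $K_{i}(t,x)\equiv\kappa_{i}$ with $\kappa_{i}\geq0$: these are nonnegative, and $\|K_{i}(t,\cdot)\|_{L^{\infty}(U)}=\kappa_{i}$ for every $t$, so both (a) and (b) hold trivially and the admissible class is nonempty. This already renders the assumption non-vacuous and matches the constant gains employed in the numerical experiments. Next I would enlarge the class to the separable form $K_{i}(t,x)=\kappa_{i}(t)\chi_{i}(x)$ with $\kappa_{i}\in L_{loc}^{\infty}(\mathbb{R}_{+};\mathbb{R}_{+})$ and $\chi_{i}\in L^{\infty}(U;\mathbb{R}_{+})$; here $\|K_{i}(t,\cdot)\|_{L^{\infty}(U)}=|\kappa_{i}(t)|\,\|\chi_{i}\|_{L^{\infty}(U)}$, so nonnegativity is inherited factorwise, measurability in $t$ from that of $\kappa_{i}$, and local essential boundedness likewise from $\kappa_{i}$. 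This yields a large admissible family that still encodes the seasonal and spatial heterogeneity one wishes to build into the correction terms.

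The delicate point, which I expect to be the main obstacle, is condition (b) once genuinely non-separable spatial dependence is allowed, because $L^{\infty}(U)$ is not separable and Bochner strong measurability of $t\mapsto K_{i}(t,\cdot)$ is strictly stronger than joint measurability of $(t,x)\mapsto K_{i}(t,x)$ on $\mathbb{R}_{+}\times U$. I would resolve this exactly as the paper already does for the model coefficients $\alpha,\beta,\eta$ in Assumptions \ref{Hypothesis_8}--\ref{Hypothesis_10}: interpret $L_{loc}^{\infty}(\mathbb{R}_{+};L^{\infty}(U;\mathbb{R}_{+}))$ in the iterated essential-supremum sense, under which the defining requirement is joint measurability together with local finiteness of $\operatorname{ess\,sup}_{t\in[0,T]}\operatorname{ess\,sup}_{x\in U}K_{i}(t,x)$. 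Both the constant and the separable constructions satisfy this directly, which verifies the assumption while keeping it fully consistent with the regularity already imposed on the other data of the spatial model.
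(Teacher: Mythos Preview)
The statement you are addressing is not a theorem but an \emph{assumption}: in this paper the environment \texttt{claim} is declared via \verb|\newtheorem{claim}[theorem]{Assumption}|, so what is typeset is ``Assumption~\ref{Hypothesis_16}''. It plays exactly the same role as Assumptions~\ref{Hypothesis_8}--\ref{Hypothesis_13} for the model coefficients and as Assumption~\ref{Hypothesis_15} for the gains $k_{1},k_{2}$ in the non-spatial observer. Accordingly the paper provides no proof for it, and there is nothing to compare your argument against: it is a standing hypothesis on the design parameters $K_{1},K_{2}$, not a claim to be established.

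Your write-up is therefore miscast. What you have produced is a (perfectly reasonable) discussion of why the hypothesis is non-vacuous and consistent with the regularity imposed on the other data, together with examples (constant and separable gains) showing the admissible class is nonempty. That is useful commentary, and it indeed aligns with the paper's later numerical choices where $K_{1},K_{2}$ are taken constant, but it is not a proof of a result---nor does the paper treat it as one. If you wish to keep this material, it should be presented as a remark motivating the assumption, not as a proof.
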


By a solution of the system $\left(  \ref{ObserverPDE11}\right)  -\left(
\ref{ObserverPDE13}\right)  $ we mean an absolutely continuous function with
respect to the time $\left(  \widehat{\theta},\widehat{v}\right)  $ which
satisfies $\forall t>0,\forall x\in U,$
\begin{align*}
\widehat{\theta}\left(  t,x\right)   &  =\widehat{\theta}\left(  0,x\right)
+\int\nolimits_{0}^{t}\alpha\left(  s,x\right)  \left(  1-w\left(  s,x\right)
\widehat{\theta}\left(  s,x\right)  \right)  ds+\operatorname{div}\left(
A\left(  s,x\right)  \nabla\widehat{\theta}_{1}\right)  ds\\
&  +\int\nolimits_{0}^{t}K_{1}\left(  s,x\right)  \Phi_{1}\left(
s,x,\widehat{\theta},\widehat{v}\right)  +\int\nolimits_{0}^{t}K_{2}\left(
s,x\right)  \Phi_{2}\left(  s,x,\widehat{\theta}\right)
\end{align*}%
\[
\widehat{v}\left(  t,x\right)  =\widehat{v}\left(  0,x\right)  +\int
\nolimits_{0}^{t}\beta\left(  s,x,\widehat{\theta}\right)  \Phi_{3}\left(
s,x,\widehat{\theta},\widehat{v}\right)  ds
\]

We have the following

\begin{proposition}
\label{ExistenceUniciteObp1}If $\left(  \widehat{\theta}\left(  0,.\right)
,\widehat{v}\left(  0,.\right)  \right)  \in L^{2}\left(  U;\left[
0,1\right[  \times\left[  0,v_{\max}\right]  \right)  $ then the system
$\left(  \ref{ObserverPDE11}\right)  -\left(  \ref{ObserverPDE13}\right)  $
has a unique weak solution $\left(  \widehat{\theta},\widehat{v}\right)  \in
C\left(
%TCIMACRO{\U{211d} }%
%BeginExpansion
\mathbb{R}
%EndExpansion
_{+};H^{1}\left(  U;\left[  0,1\right]  \right)  \right)  \times C\left(
%TCIMACRO{\U{211d} }%
%BeginExpansion
\mathbb{R}
%EndExpansion
_{+};L^{2}\left(  U;\left[  0,v_{\max}\right]  \right)  \right)  $.
\end{proposition}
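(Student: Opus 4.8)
The plan is to combine the spectral reduction already used for the well-posedness of the spatial model in Section~\ref{SectionNewModelling} with the pointwise arguments of Propositions~\ref{ExistenceOdeModel} and~\ref{ExistenceUniciteOb1}. Under assumption~(\ref{Hypothesis_14}), a weak solution $\widehat{\theta}=\sum_{n=0}^{\infty}\widehat{\theta}_{n}\varphi_{n}$ of (\ref{ObserverPDE11})--(\ref{ObserverPDE12}) is characterized, exactly as in (\ref{DecomposedSolution}), by the countable family of integral identities
\[
\widehat{\theta}_{n}(t)=\widehat{\theta}_{n}(0)+\int_{0}^{t}\lambda_{n}(s)\widehat{\theta}_{n}(s)\,ds+\int_{0}^{t}\int_{U}\bigl(\alpha(1-w\widehat{\theta})+K_{1}\Phi_{1}+K_{2}\Phi_{2}\bigr)\varphi_{n}\,dx\,ds ,
\]
coupled with the pointwise identity $\widehat{v}(t,x)=\widehat{v}(0,x)+\int_{0}^{t}\beta(s,x,\widehat{\theta})\Phi_{3}(s,x,\widehat{\theta},\widehat{v})\,ds$ for a.e.\ $x\in U$. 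This is an evolution system for the pair $(\widehat{\theta},\widehat{v})$ whose right-hand side is, on bounded state sets, measurable in $t$, continuous in the state and $L^{\infty}_{loc}$ in $t$, by assumptions (\ref{Hypothesis_8})--(\ref{Hypothesis_13}) and (\ref{Hypothesis_16}); a Carath\'{e}odory-type theorem then produces a maximal weak solution $(\widehat{\theta},\widehat{v})$ with $\widehat{\theta}$ absolutely continuous in time, following the method of Lemma~A~2.7 in \cite{anita} already used for the spatial model.

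Next I would show that the box $[0,1]\times[0,v_{\max}]$ is invariant, which simultaneously yields the stated range and the a priori bound needed for global existence. Following the truncation argument of Propositions~\ref{ExistenceOdeModel} and~\ref{ExistenceUniciteOb1}, put $\widehat{\theta}_{1}=\max\{0,-\widehat{\theta}\}$, $\widehat{\theta}_{2}=\max\{0,\widehat{\theta}-1\}$, $\widehat{v}_{1}=\max\{0,-\widehat{v}\}$ and $\widehat{v}_{2}=\max\{0,\widehat{v}-v_{\max}\}$; all four vanish at $t=0$. Testing (\ref{ObserverPDE11}) against $-\widehat{\theta}_{1}$, resp.\ $\widehat{\theta}_{2}$, and using the boundary condition (\ref{ObserverPDE12}), the divergence term contributes a nonpositive amount by the coercivity assumption (\ref{Hypothesis_12}); both feedback terms $K_{1}\Phi_{1}$ and $K_{2}\Phi_{2}$ vanish on $\{\widehat{\theta}\notin\,]0,1[\,\}$ by the very definition of $\Phi_{1},\Phi_{2}$, so they do not contribute on the sets $\{\widehat{\theta}<0\}$ and $\{\widehat{\theta}>1\}$; and the linear part $\alpha(1-w\widehat{\theta})$ has the same favourable sign as in Proposition~\ref{ExistenceUniciteOb1} because $w\ge1$ and $\alpha\ge0$. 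Hence $\tfrac{d}{dt}\|\widehat{\theta}_{i}(t,\cdot)\|_{L^{2}}^{2}\le0$ for $i=1,2$, so $\widehat{\theta}\in[0,1]$ a.e. For $\widehat{v}$, since (\ref{ObserverPDE13}) carries no diffusion one freezes $x$ and checks that $\Phi_{3}(t,x,\widehat{\theta},0)=1\ge0$, while $(1+\varepsilon-\widehat{\theta})\eta(t,x)\le(1+\varepsilon)\eta(t,x)\le1$ by (\ref{Hypothesis_9}) gives $\Phi_{3}(t,x,\widehat{\theta},v_{\max})\le0$; with $\beta\ge0$ this turns $\widehat{v}=0$ and $\widehat{v}=v_{\max}$ into barriers, so $\widehat{v}\in[0,v_{\max}]$ a.e.

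Finally I would upgrade local to global well-posedness. On the invariant box the nonlinearities are bounded and Lipschitz in the state: $\Phi_{3}$ is smooth in $(\widehat{\theta},\widehat{v})$ away from $\widehat{\theta}=1+\varepsilon$, which the solution never attains since $\varepsilon>0$ and $\widehat{\theta}\le1$; $\beta(t,x,\cdot)$ is Lipschitz by (\ref{Hypothesis_13}); and the contributions of $\Phi_{1},\Phi_{2}$ are treated exactly as the corresponding $\phi_{1},\phi_{2}$ in Proposition~\ref{ExistenceUniciteOb1}. This gives uniqueness of the maximal weak solution, and the a priori bound of the previous step rules out finite-time blow-up, so the solution is global; the regularity $(\widehat{\theta},\widehat{v})\in C(\mathbb{R}_{+};H^{1}(U;[0,1]))\times C(\mathbb{R}_{+};L^{2}(U;[0,v_{\max}]))$ then follows as in the well-posedness propositions of Section~\ref{SectionNewModelling} (parabolic smoothing for $\widehat{\theta}$, the integral equation for $\widehat{v}$), together with Theorem~5.7 in \cite{benzoni} for the continuation. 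The step I expect to be delicate is precisely this coupling: $\widehat{v}$ enters $\Phi_{1}$ through the possibly discontinuous branch $\{v(t,x)\le\widehat{v}\}$ while $\widehat{\theta}$ enters the $\widehat{v}$-equation nonlinearly, so to get a clean contraction one may prefer to decouple the two equations by a fixed-point iteration --- solve the parabolic $\widehat{\theta}$-equation for a given $\widehat{v}$, then the $\widehat{v}$-ODE for the resulting $\widehat{\theta}$ --- on a short time interval, and then iterate.
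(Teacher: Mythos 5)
Your proposal is correct and follows essentially the same route as the paper: the spectral decomposition $\widehat{\theta}=\sum_{n}\widehat{\theta}_{n}\varphi_{n}$ under assumption (\ref{Hypothesis_14}) combined with a Carath\'{e}odory-type existence argument, the truncation functions $\max\{0,-\widehat{\theta}\}$, $\max\{0,\widehat{\theta}-1\}$, $\max\{0,-\widehat{v}\}$, $\max\{0,\widehat{v}-v_{\max}\}$ to establish invariance of $[0,1]\times[0,v_{\max}]$, and Lipschitz continuity of the nonlinearities on that box (via (\ref{Hypothesis_13})) for uniqueness and globality. The paper merely packages the same ingredients slightly more formally, splitting the right-hand side into a linear generator $\mathcal{G}$ of an evolution system and a Lipschitz perturbation $\mathcal{F}$ on $\ell^{2}(\mathbb{R})\times L^{2}(U;\mathbb{R})$, whereas you invoke the decomposition (\ref{DecomposedSolution}) directly and offer a decoupling fixed-point iteration as a fallback; both are valid.
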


\begin{proof}
We first show that if $\left(  \widehat{\theta},\widehat{v}\right)  $ is a
local solution of the system $\left(  \ref{ObserverPDE11}\right)  -\left(
\ref{ObserverPDE13}\right)  $ then $\forall t>0,$ $\left(  \widehat{\theta
}\left(  t,.\right)  ,\widehat{v}\left(  t,.\right)  \right)  $ is valued in
$\left[  0,1\right]  \times\left[  0,v_{\max}\right]  $. Let $\forall t\geq0,$
$\widehat{\theta}_{1}=\max\left\{  0,-\widehat{\theta}\right\}  ,$
$\widehat{\theta}_{2}=\max\left\{  0,\widehat{\theta}-1\right\}  ,$
$\widehat{v}_{1}=\max\left\{  0,-\widehat{v}\right\}  ,$ $\widehat{v}_{2}%
=\max\left\{  0,\widehat{v}-v_{\max}\right\}  $. We have $\widehat{\theta}%
_{1}\left(  0,.\right)  =\widehat{\theta}_{2}\left(  0,.\right)  =\widehat
{v}_{1}\left(  0,.\right)  =\widehat{v}_{2}\left(  0,.\right)  =0$. Let
$S_{i}^{\theta}$ and $S_{i}^{v}$ be convex open subsets of $%
%TCIMACRO{\U{211d} }%
%BeginExpansion
\mathbb{R}
%EndExpansion
_{+}$ where respectively functions $\widehat{\theta}_{i}\left(  t,.\right)  $
and $\widehat{v}_{i}\left(  t,.\right)  $ ($i\in\left\{  1,2\right\}  $) are
positive on subsets of $U$ with a positive measure. Then $\forall t\in
S_{i}^{\theta}$ (respectively $S_{i}^{v}$) we have%
\begin{align*}
d_{t}\left\Vert \widehat{\theta}_{1}\left(  t,.\right)  \right\Vert
_{L^{2}\left(  U;%
%TCIMACRO{\U{211d} }%
%BeginExpansion
\mathbb{R}
%EndExpansion
\right)  }^{2}/2  &  =\int\nolimits_{U}\widehat{\theta}_{1}\left(  t,x\right)
\partial_{t}f_{1}\left(  t,x\right)  dx\\
&  =-\int\nolimits_{U}\alpha\left(  t,x\right)  \left(  1+w\left(  t,x\right)
\widehat{\theta}_{1}\left(  t,x\right)  \right)  \widehat{\theta}_{1}\left(
t,x\right)  dx\\
&  -\int\nolimits_{U}K_{1}\left(  t,x\right)  \Phi_{1}\left(  t,x,-\widehat
{\theta}_{1}\left(  t,x\right)  ,\widehat{v}\right)  dx\\
&  -\int\nolimits_{U}K_{2}\left(  t,x\right)  \Phi_{2}\left(  t,x,-\widehat
{\theta}_{1}\left(  t,x\right)  \right)  dx\\
&  \leq0
\end{align*}%
\begin{align*}
d_{t}\left\Vert \widehat{\theta}_{2}\left(  t,.\right)  \right\Vert
_{L^{2}\left(  U;%
%TCIMACRO{\U{211d} }%
%BeginExpansion
\mathbb{R}
%EndExpansion
\right)  }^{2}/2  &  =\int\nolimits_{U}\widehat{\theta}_{2}\left(  t,x\right)
\partial_{t}\widehat{\theta}_{2}\left(  t,x\right)  dx\\
&  =\int\nolimits_{U}\alpha\left(  t,x\right)  \left(  1-w\left(  t,x\right)
\right)  \widehat{\theta}_{2}\left(  t,x\right)  dx-\int\nolimits_{U}%
\alpha\left(  t,x\right)  w\left(  t,x\right)  \widehat{\theta}_{2}^{2}\left(
t,x\right)  dx\\
&  \leq0
\end{align*}%
\begin{align*}
\partial_{t}\left\Vert \widehat{v}_{1}\left(  t,.\right)  \right\Vert
_{L^{2}\left(  U;%
%TCIMACRO{\U{211d} }%
%BeginExpansion
\mathbb{R}
%EndExpansion
\right)  }^{2}/2  &  =\int\nolimits_{U}\widehat{v}_{1}\left(  t,x\right)
\partial_{t}\widehat{v}_{1}\left(  t,x\right)  dx\\
&  =-\int\nolimits_{U}\beta\left(  t,x,-\widehat{v}_{1}\left(  t,x\right)
\right)  \Phi_{3}\left(  t,x,\widehat{\theta}_{1},-\widehat{v}_{1}\left(
t,x\right)  \right)  \widehat{v}_{1}\left(  t,x\right)  dx\\
&  \leq0
\end{align*}
and
\begin{align*}
d_{t}\left\Vert \widehat{v}_{2}\left(  t,.\right)  \right\Vert _{L^{2}\left(
U;%
%TCIMACRO{\U{211d} }%
%BeginExpansion
\mathbb{R}
%EndExpansion
\right)  }^{2}/2  &  =\int\nolimits_{U}\widehat{v}_{2}\left(  t,x\right)
\partial_{t}\widehat{v}_{2}\left(  t,x\right)  dx\\
&  \leq\int\nolimits_{U}\beta\left(  t,x,\widehat{v}_{2}\left(  t,x\right)
+v_{\max}\right)  \Phi_{3}\left(  t,x,\widehat{\theta}_{1},\widehat{v}%
_{2}\left(  t,x\right)  +v_{\max}\right)  \widehat{v}_{2}\left(  t,x\right)
dx\\
&  \leq0
\end{align*}
Using the above inequalities we get that functions $\widehat{\theta}%
_{i}\left(  t,.\right)  $ and $\widehat{v}_{i}\left(  t,.\right)  $
($i\in\left\{  1,2\right\}  $) are nonpositive on the sets $S_{i}^{\theta}$
and $S_{i}^{v}$ which are necessary empty. We conclude that $\forall t>0,$
$\left(  \widehat{\theta}\left(  t,.\right)  ,\widehat{v}\left(  t,.\right)
\right)  $ is valued in $\left[  0,1\right]  \times\left[  0,v_{\max}\right]
$.

Now let show the existence and the uniqueness of the solution of $\left(
\ref{ObserverPDE11}\right)  -\left(  \ref{ObserverPDE13}\right)  $. It
suffices to establish existence of a local solution to conclude the result.
Just as decomposition $\left(  \ref{DecomposedSolution}\right)  $ a solution
to $\left(  \ref{ObserverPDE11}\right)  -\left(  \ref{ObserverPDE12}\right)  $
can be written as the sum $%
%TCIMACRO{\dsum \nolimits_{n=0}^{\infty}}%
%BeginExpansion
{\displaystyle\sum\nolimits_{n=0}^{\infty}}
%EndExpansion
\widehat{\theta}_{n}\varphi_{n}$. Each $\widehat{\theta}_{n}$ satisifies the
realtion
\begin{align*}
\widehat{\theta}_{n}\left(  t\right)   &  =%
%TCIMACRO{\dsum \nolimits_{m=0}^{\infty}}%
%BeginExpansion
{\displaystyle\sum\nolimits_{m=0}^{\infty}}
%EndExpansion
\widehat{\theta}_{m}%
%TCIMACRO{\dint \nolimits_{U}}%
%BeginExpansion
{\displaystyle\int\nolimits_{U}}
%EndExpansion
\varphi_{m}\left(  x\right)  \varphi_{n}\left(  x\right)  dx\\
&  =\widehat{\theta}_{n}\left(  0\right)  +%
%TCIMACRO{\dint \nolimits_{0}^{t}}%
%BeginExpansion
{\displaystyle\int\nolimits_{0}^{t}}
%EndExpansion
\lambda_{n}\left(  s\right)  \widehat{\theta}_{n}\left(  s\right)  ds+%
%TCIMACRO{\dint \nolimits_{0}^{t}}%
%BeginExpansion
{\displaystyle\int\nolimits_{0}^{t}}
%EndExpansion%
%TCIMACRO{\dint \nolimits_{U}}%
%BeginExpansion
{\displaystyle\int\nolimits_{U}}
%EndExpansion
\varphi_{n}\left(  x\right)  \alpha\left(  s,x\right)  \left(  1-w\left(
s,x\right)  \widehat{\theta}\left(  s,x\right)  \right)  dxds\\
&  \ \ \ +%
%TCIMACRO{\dint \nolimits_{0}^{t}}%
%BeginExpansion
{\displaystyle\int\nolimits_{0}^{t}}
%EndExpansion%
%TCIMACRO{\dint \nolimits_{U}}%
%BeginExpansion
{\displaystyle\int\nolimits_{U}}
%EndExpansion
\varphi_{n}\left(  x\right)  K_{1}\left(  s,x\right)  \Phi_{1}\left(
s,x,\widehat{\theta}\left(  s,x\right)  ,\widehat{v}\left(  s,x\right)
\right)  dxds\\
&  \ \ \ +%
%TCIMACRO{\dint \nolimits_{0}^{t}}%
%BeginExpansion
{\displaystyle\int\nolimits_{0}^{t}}
%EndExpansion%
%TCIMACRO{\dint \nolimits_{U}}%
%BeginExpansion
{\displaystyle\int\nolimits_{U}}
%EndExpansion
\varphi_{n}\left(  x\right)  K_{2}\left(  s,x\right)  \Phi_{2}\left(
s,x,\widehat{\theta}\left(  s,x\right)  \right)  dxds
\end{align*}
Let $\ell^{2}\left(  \mathbb{R}\right)  $ denote the Hilbert space of
real-valued sequences $\left\{  s_{n}\right\}  _{n\in%
%TCIMACRO{\U{2115} }%
%BeginExpansion
\mathbb{N}
%EndExpansion
}$ such that $\left\langle s,s\right\rangle =\sum\nolimits_{n\in%
%TCIMACRO{\U{2115} }%
%BeginExpansion
\mathbb{N}
%EndExpansion
}s_{n}^{2}<\infty$ . We set
\[
S_{\theta}=\left\{  s\in\ell^{2}\left(  \mathbb{R}\right)  ;0\leq
\sum\nolimits_{n\in%
%TCIMACRO{\U{2115} }%
%BeginExpansion
\mathbb{N}
%EndExpansion
}s_{n}\leq1\right\}
\]
and
\[
S_{v}=\left\{  w\in L^{2}\left(  U;\mathbb{R}\right)  ;\forall x\in U,w\left(
x\right)  \in\left[  0,v_{\max}\right]  \right\}
\]
Let consider the linear operator $\mathcal{G}:D\left(  \mathcal{G}\right)
\rightarrow\ell^{2}\left(  \mathbb{R}\right)  \times L^{2}\left(
U;\mathbb{R}\right)  $ where $D\left(  \mathcal{G}\right)  \subseteq%
%TCIMACRO{\U{211d} }%
%BeginExpansion
\mathbb{R}
%EndExpansion
_{+}\times\ell^{2}\left(  \mathbb{R}\right)  \times L^{2}\left(
U;\mathbb{R}\right)  $ defined by
\[
\mathcal{G}_{n}^{1}\left(  t,y,z\right)  =\lambda_{n}\left(  t\right)  y_{n}-%
%TCIMACRO{\dint \nolimits_{U}}%
%BeginExpansion
{\displaystyle\int\nolimits_{U}}
%EndExpansion
\alpha\left(  t,x\right)  w\left(  t,x\right)  \varphi_{n}\left(  x\right)
\left\langle y,\varphi\left(  x\right)  \right\rangle dx-%
%TCIMACRO{\dint \nolimits_{U}}%
%BeginExpansion
{\displaystyle\int\nolimits_{U}}
%EndExpansion
K_{1}\left(  t,x\right)  \varphi_{n}\left(  x\right)  \delta\left(
\left\langle y,\varphi\left(  x\right)  \right\rangle \right)  \left\langle
y,\varphi\left(  x\right)  \right\rangle dx
\]%
\[
\mathcal{G}^{2}\left(  t,y,z\right)  =0
\]
$\mathcal{G}$ is the infinitesimal generator of the evolution system $E$
defined such as $\forall n\in%
%TCIMACRO{\U{2115} }%
%BeginExpansion
\mathbb{N}
%EndExpansion
$, $\forall t_{0}^{+}\leq s\leq t\leq t_{1}$,
\[
\left(  E^{1}\left(  s,t\right)  \left(  y,z\right)  \right)  _{n}=%
%TCIMACRO{\dint \nolimits_{U}}%
%BeginExpansion
{\displaystyle\int\nolimits_{U}}
%EndExpansion
\exp\left(  \int\nolimits_{s}^{t}\left(  \lambda_{n}\left(  \tau\right)
-\alpha\left(  \tau,x\right)  w\left(  \tau,x\right)  -K_{1}\left(
\tau,x\right)  \right)  d\tau\right)  \left\langle y,\varphi\left(  x\right)
\right\rangle \varphi\left(  x\right)  dx
\]%
\[
E^{2}\left(  s,t\right)  \left(  y,z\right)  =z
\]
Let also consider the operators $\mathcal{F}:D\left(  \mathcal{F}\right)
\cap\left(
%TCIMACRO{\U{211d} }%
%BeginExpansion
\mathbb{R}
%EndExpansion
_{+}\times S_{\theta}\times S_{v}\right)  \rightarrow\ell^{2}\left(
\mathbb{R}\right)  \times L^{2}\left(  U;\mathbb{R}\right)  $ where $D\left(
\mathcal{F}\right)  \subseteq%
%TCIMACRO{\U{211d} }%
%BeginExpansion
\mathbb{R}
%EndExpansion
_{+}\times\ell^{2}\left(  \mathbb{R}\right)  \times L^{2}\left(
U;\mathbb{R}\right)  $ defined by
\begin{align*}
\mathcal{F}_{n}^{1}\left(  t,y,z\right)   &  =%
%TCIMACRO{\dint \nolimits_{U}}%
%BeginExpansion
{\displaystyle\int\nolimits_{U}}
%EndExpansion
\varphi_{n}\left(  x\right)  \left(  \alpha\left(  t,x\right)  +K_{2}\left(
t,x\right)  \overline{\gamma}\left(  t,x,\theta\left(  t,x\right)  ,v\left(
t,x\right)  ,\rho\left(  t,x\right)  \right)  \left(  1-\rho\left(
t,x\right)  \right)  \right)  dx\\
&  \ \ \ +%
%TCIMACRO{\dint \nolimits_{U}}%
%BeginExpansion
{\displaystyle\int\nolimits_{U}}
%EndExpansion
K_{1}\left(  t,x\right)  \delta\left(  \left\langle y,\varphi\left(  x\right)
\right\rangle \right)  \frac{\left(  1+\varepsilon\right)  \left(  z-v\right)
\varphi_{n}\left(  x\right)  }{z}dx\\
&  \ \ \ +%
%TCIMACRO{\dint \nolimits_{U}}%
%BeginExpansion
{\displaystyle\int\nolimits_{U}}
%EndExpansion
K_{1}\left(  t,x\right)  \delta\left(  \left\langle y,\varphi\left(  x\right)
\right\rangle \right)  \frac{v\varphi_{n}\left(  x\right)  \left\langle
y,\varphi\left(  x\right)  \right\rangle }{z}dx\\
&  \ \ \ -%
%TCIMACRO{\dint \nolimits_{U}}%
%BeginExpansion
{\displaystyle\int\nolimits_{U}}
%EndExpansion
\varphi_{n}\left(  x\right)  K_{2}\left(  t,x\right)  \overline{\gamma}\left(
t,x,y,v\left(  t,x\right)  ,\rho\left(  t,x\right)  \right)  \left(
1-\rho\left(  t,x\right)  \right)  dx
\end{align*}%
\[
\mathcal{F}^{2}\left(  t,y,z\right)  =\beta\left(  t,x,\left\langle
y,\varphi\left(  x\right)  \right\rangle \right)  \Phi_{3}\left(
t,x,\left\langle y,\varphi\left(  x\right)  \right\rangle ,z\right)
\]
The function $\mathcal{F}$ is measurable with respect to the time $t$. We can
show as in the proof of proposition \ref{ExistenceUniciteOb1} that
$\mathcal{F}$ is Lipschitz continuous with respect to $\left(  y,z\right)  $
and there is a global and unique solution for the system $\left(
\ref{ObserverPDE11}\right)  -\left(  \ref{ObserverPDE13}\right)  $. Note that
$\forall\left(  t,x\right)  \in%
%TCIMACRO{\U{211d} }%
%BeginExpansion
\mathbb{R}
%EndExpansion
_{+}\times U$ the function $\beta\left(  t,x,.\right)  $ is Lipschitz
continuous and bounded on the set $\left[  0,1\right]  $ using the assumption
$\left(  \ref{Hypothesis_13}\right)  $\textbf{.}
\end{proof}

\begin{theorem}
\label{ObsLocPDE} Let consider the system $\left(  \ref{ObserverPDE11}\right)
-\left(  \ref{ObserverPDE13}\right)  $ and assume that $\widehat{v}\left(
0,.\right)  =v\left(  0,.\right)  $.

\begin{enumerate}
\item[$\left(  i\right)  $] If the functions $K_{1}$ and $K_{2}$ are
identically null almost everywhere on $U$ and
\begin{equation}
\inf\left\{  \alpha\left(  t,x\right)  ;t>0,x\in U\right\}  >0
\end{equation}
then $\widehat{\theta}$ is a globally exponentially asymptotically stable
observer for $\theta$.

\item[$\left(  ii\right)  $] If the function $K_{1}$ is identically null
almost everywhere on $U$ and $K_{2}$ is not identically null at least on a
subset of $U$ with positive measure then $\widehat{\theta}$ is a a locally
asymptotically stable observers for $\theta$. Moreover, if there is a positive
function $C_{\gamma}\in L_{loc}^{\infty}\left(
%TCIMACRO{\U{211d} }%
%BeginExpansion
\mathbb{R}
%EndExpansion
_{+}^{\ast}\times U;%
%TCIMACRO{\U{211d} }%
%BeginExpansion
\mathbb{R}
%EndExpansion
_{+}\right)  $ such that
\begin{equation}
\left\vert \overline{\gamma}\left(  t,x,y_{1},y_{2},y_{3}\right)
-\overline{\gamma}\left(  t,x,z_{1},z_{2},z_{3}\right)  \right\vert \geq
C_{\gamma}\left(  t,x\right)  \left\Vert y-z\right\Vert
\label{ConditionCoercive2}%
\end{equation}
then $\widehat{\theta}$ is a a globally exponentially asymptotically stable
observers for $\theta$.

\item[$\left(  iii\right)  $] If the function $K_{2}$ is identically null and
\begin{equation}
\inf\left\{  \frac{v+\left(  1+\varepsilon-\theta\right)  \partial_{\theta}%
v}{v}K_{1}\delta\left(  \widehat{\theta}_{1}\right)  +\alpha\left(
t,x\right)  w\left(  t,x\right)  ;\text{ }\left(  t,x\right)  \in%
%TCIMACRO{\U{211d} }%
%BeginExpansion
\mathbb{R}
%EndExpansion
_{+}\times U\right\}  >0\text{.} \label{ConditionStabilityPde1}%
\end{equation}
then $\widehat{\theta}$ is a a locally exponentially asymptotically stable
observers for $\theta$.

\item[$\left(  iv\right)  $] If the functions $K_{1}$ and $K_{2}$ are not
identically null, and
\begin{equation}
\inf\left\{  K_{1}\delta\left(  \widehat{\theta}\right)  \frac{v+\left(
1+\varepsilon-\theta\right)  \partial_{\theta}v}{v}+K_{2}\Phi_{2}\left(
t,x,\widehat{\theta}\right)  +\alpha\left(  t,x\right)  w\left(  t,x\right)
;\text{ }\left(  t,x\right)  \in%
%TCIMACRO{\U{211d} }%
%BeginExpansion
\mathbb{R}
%EndExpansion
_{+}\times U\right\}  >0\text{.} \label{ConditionStabilityPde2}%
\end{equation}
then $\widehat{\theta}$ is a a locally exponentially asymptotically stable
observers for $\theta$. Moreover, if the condition $\left(
\ref{ConditionCoercive2}\right)  $ is satisfied and
\begin{equation}
K_{2}\left(  t,x\right)  \left\vert \Phi_{2}\left(  t,\widehat{\theta}\left(
t,x\right)  \right)  \right\vert >K_{1}\left(  t,x\right)  \Phi_{1}\left(
t,\widehat{\theta}\left(  t,x\right)  ,\widehat{v}\left(  t,x\right)  \right)
\text{, }\forall\left(  t,x\right)  \in%
%TCIMACRO{\U{211d} }%
%BeginExpansion
\mathbb{R}
%EndExpansion
_{+}^{\ast}\times U \label{ConditionDominance2}%
\end{equation}
then $\widehat{\theta}$ is a a globally exponentially asymptotically stable
observers for $\theta$.
\end{enumerate}
\end{theorem}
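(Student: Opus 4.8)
The plan is to carry over, almost verbatim, the energy estimate used for Theorem \ref{ObsLocODE}, the only genuinely new ingredient being the diffusion term, which I expect to be harmless for the $L^2$-norm of the estimation error. Set $\widehat{e}=\theta-\widehat{\theta}$. Subtracting (\ref{ObserverPDE11}) from (\ref{ModelSpace1}) and using that $\theta$ and $\widehat{\theta}$ obey the same Neumann condition, one gets $\partial_t\widehat{e}=-\alpha w\,\widehat{e}-K_1\Phi_1(t,x,\widehat{\theta},\widehat{v})-K_2\Phi_2(t,x,\widehat{\theta})+\operatorname{div}(A\nabla\widehat{e})$ on $\mathbb{R}_+^\ast\times U$ with $\langle A\nabla\widehat{e},n\rangle=0$ on $\partial U$. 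Multiplying by $\widehat{e}$, integrating over $U$ and integrating the diffusion term by parts yields $\tfrac12 d_t\|\widehat{e}(t,.)\|_{L^2(U)}^2=-\int_U\alpha w\,\widehat{e}^2-\int_U K_1\Phi_1\widehat{e}-\int_U K_2\Phi_2\widehat{e}-\int_U\langle A\nabla\widehat{e},\nabla\widehat{e}\rangle$, and by Assumption (\ref{Hypothesis_12}) the last term is $\leq-\zeta\|\nabla\widehat{e}\|_{L^2(U)}^2\leq 0$, so it can always be discarded from an upper bound. This is the observation that makes the spatial case no harder than the non-spatial one.

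For (i), with $K_1=K_2=0$ we are left with $\tfrac12 d_t\|\widehat{e}\|_{L^2}^2\leq-\int_U\alpha w\,\widehat{e}^2\leq-\inf\{\alpha(t,x)w(t,x)\}\,\|\widehat{e}\|_{L^2}^2$; since $w\geq1$ and $\inf\alpha>0$, Grönwall gives $\|\widehat{e}(t,.)\|_{L^2}\leq e^{-ct}\|\widehat{e}(0,.)\|_{L^2}$ for some $c>0$, hence global exponential stability. For (ii), substituting $\partial_t\rho=\overline{\gamma}(t,x,\theta,v,\rho)(1-\rho)$ into the definition of $\Phi_2$ gives $\Phi_2(t,x,\widehat{\theta})=(\overline{\gamma}(t,x,\theta,v,\rho)-\overline{\gamma}(t,x,\widehat{\theta},v,\rho))(1-\rho)$; since $\overline{\gamma}$ is increasing in its inhibition argument (Assumption (\ref{Hypothesis_3})) and $1-\rho\in[0,1]$, $K_2\geq0$, the product $K_2\Phi_2\widehat{e}$ is pointwise nonnegative, so $\tfrac12 d_t\|\widehat{e}\|_{L^2}^2\leq0$, which already gives Lyapunov stability; under the coercivity bound (\ref{ConditionCoercive2}) one has $K_2\Phi_2\widehat{e}\geq K_2 C_\gamma(1-\rho)\widehat{e}^2$, whence exponential decay again by Grönwall.

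The real work is in (iii)--(iv), where $\Phi_1$ must be linearised about $\widehat{e}=0$ exactly as in the proof of Theorem \ref{ObsLocODE}(iii). Using $\widehat{v}(0,.)=v(0,.)$ together with (\ref{ModelSpace42}) and (\ref{ObserverPDE13}), one expands, for each fixed $x$, $v/\widehat{v}=1-\bigl(\tfrac{1}{1+\varepsilon-\theta}+\tfrac{\partial_\theta v}{v}\bigr)\widehat{e}+\widehat{e}\,\mathcal{O}(\widehat{e})$, so on the set where $\delta(\widehat{\theta})=1$ one has $\Phi_1\approx\tfrac{v+(1+\varepsilon-\theta)\partial_\theta v}{v}\,\widehat{e}$; the error then satisfies, to first order, $\partial_t\widehat{e}\approx-\bigl(\alpha w+K_1\delta(\widehat{\theta})\tfrac{v+(1+\varepsilon-\theta)\partial_\theta v}{v}\bigr)\widehat{e}+\operatorname{div}(A\nabla\widehat{e})$. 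Testing against $\widehat{e}$, integrating, discarding the nonpositive diffusion contribution and invoking the infimum hypothesis (\ref{ConditionStabilityPde1}) gives $d_t\|\widehat{e}\|_{L^2}^2\leq-2c\|\widehat{e}\|_{L^2}^2$ with $c>0$, i.e.\ local exponential stability. For (iv) one retains both correction terms: the linearised error equation carries the coefficient appearing in (\ref{ConditionStabilityPde2}), so positivity of that infimum yields local exponential stability, while for the global claim the coercivity condition (\ref{ConditionCoercive2}) makes the $\Phi_2$-term strictly dissipative and the dominance condition (\ref{ConditionDominance2}) ensures it outweighs the possibly destabilising $\Phi_1$-term, keeping $d_t\|\widehat{e}\|_{L^2}^2$ strictly negative away from $\widehat{e}=0$. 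The two points I expect to fight with are making the $\mathcal{O}(\widehat{e})$ expansion uniform in $x\in U$ — which needs the $L^\infty$-type bounds on $\beta$, $\partial_\theta\beta$ and the lower bound $\eta\geq\eta_m^\ast$ from Assumptions (\ref{Hypothesis_9}), (\ref{Hypothesis_10}) and (\ref{Hypothesis_13}) — and the non-smoothness caused by the indicator $\delta(\widehat{\theta})$ at the endpoints of $\left]0,1\right[$, which, as in the non-spatial case, is dealt with by working in the regime where $\widehat{\theta}$ stays in the interior so that the threshold does not switch.
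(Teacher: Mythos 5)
Your proposal follows essentially the same route as the paper: form the $L^2(U)$ energy of the error $\widehat{e}=\theta-\widehat{\theta}$, integrate the diffusion term by parts and discard it as nonpositive (via Assumption \ref{Hypothesis_12}), exploit the monotonicity of $\overline{\gamma}$ so that the $\Phi_{2}$-term is dissipative, and reuse the pointwise linearisation of $\Phi_{1}$ from the non-spatial Theorem \ref{ObsLocODE} for parts $(iii)$--$(iv)$. The argument is correct and, if anything, slightly more explicit than the paper about the role of the diffusion term and the uniformity issues in the expansion.
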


\begin{proof}
Let $\widehat{e}=\theta-\widehat{\theta}$ be the error of estimation.

\begin{enumerate}
\item[$\left(  i\right)  $] The error $\widehat{e}$ satisfies
\begin{align}
d_{t}\left\Vert \widehat{e}\left(  t,.\right)  \right\Vert _{L^{2}\left(
U\right)  }^{2}/2  &  =-\int\nolimits_{U}\alpha\left(  t,x\right)  w\left(
t,x\right)  \widehat{e}^{2}\left(  t,x\right)  dx\\
&  \leq-\left\Vert \widehat{e}\left(  t,.\right)  \right\Vert _{L^{2}\left(
U\right)  }^{2}\underset{\left(  s,y\right)  \in%
%TCIMACRO{\U{211d} }%
%BeginExpansion
\mathbb{R}
%EndExpansion
_{+}\times U}{\inf}\left\{  \alpha\left(  s,y\right)  \right\} \nonumber
\end{align}
and
\begin{equation}
\left\Vert \widehat{e}\left(  t,.\right)  \right\Vert _{L^{2}\left(  U\right)
}^{2}\leq\exp\left(  -2t\underset{\left(  s,y\right)  \in%
%TCIMACRO{\U{211d} }%
%BeginExpansion
\mathbb{R}
%EndExpansion
_{+}\times U}{\inf}\left\{  \alpha\left(  s,y\right)  \right\}  \right)
\left\Vert \widehat{e}\left(  0,.\right)  \right\Vert _{L^{2}\left(  U\right)
}^{2}\text{.}%
\end{equation}
We deduce the results.

\item[$\left(  ii\right)  $] The error $\widehat{e}$ satisfies
\begin{align}
d_{t}\left\Vert \widehat{e}\left(  t,.\right)  \right\Vert _{L^{2}\left(
U\right)  }^{2}/2  &  =-\int\nolimits_{U}\alpha\left(  t,x\right)  w\left(
t,x\right)  \widehat{e}^{2}dx-\int\nolimits_{U}\left\langle A\left(
t,x\right)  \nabla\widehat{e},\nabla\widehat{e}\right\rangle dx\\
&  \text{ \ \ \ }-\int\nolimits_{U}K_{2}\left(  t,x\right)  \Phi_{2}\left(
t,x,\widehat{\theta}\right)  \widehat{e}^{2}\left(  t,x\right)  dx\\
&  \leq-\int\nolimits_{U}\frac{v+\left(  1+\varepsilon-\theta\right)
\partial_{\theta}v}{v}K_{1}\left(  t,x\right)  \delta\left(  \widehat{\theta
}\right)  \widehat{e}^{2}dx-\int\nolimits_{U}\alpha\left(  t,x\right)
w\left(  t,x\right)  \widehat{e}^{2}dx\nonumber\\
&  \text{ \ \ \ }-\int\nolimits_{U}K_{2}\left(  t,x\right)  \left(
1-\rho\right)  \left(  \overline{\gamma}\left(  t,x,\theta,v,\rho\right)
-\overline{\gamma}\left(  t,x,\widehat{\theta},v,\rho\right)  \right)
\widehat{e}\left(  t,x\right)  dx\nonumber
\end{align}
and
\[
\left\Vert \widehat{e}\left(  t,.\right)  \right\Vert _{L^{2}\left(  U\right)
}^{2}\leq\exp\left(  -2t\underset{\left(  s,y\right)  \in%
%TCIMACRO{\U{211d} }%
%BeginExpansion
\mathbb{R}
%EndExpansion
_{+}\times U}{\inf}\left\{  \alpha\left(  s,y\right)  \right\}  \right)
\left\Vert \widehat{e}\left(  0,.\right)  \right\Vert _{L^{2}\left(  U\right)
}^{2}\text{.}%
\]
Moreover, if the condition $\left(  \ref{ConditionCoercive2}\right)  $ is
satisfied then
\begin{align*}
d_{t}\left\Vert \widehat{e}\left(  t,.\right)  \right\Vert _{L^{2}\left(
U\right)  }^{2}  &  \leq-\int\nolimits_{U}2\alpha\left(  t,x\right)  w\left(
t,x\right)  \widehat{e}^{2}\left(  t,x\right)  dx-\int\nolimits_{U}%
2K_{2}\left(  t,x\right)  \left(  1-\rho\left(  t,x\right)  \right)
C_{\gamma}\left(  t\right)  \widehat{e}^{2}\left(  t,x\right)  dx\\
&  \leq-2\left\Vert \widehat{e}\left(  t,.\right)  \right\Vert _{L^{2}\left(
U\right)  }^{2}\underset{\left(  s,y\right)  \in%
%TCIMACRO{\U{211d} }%
%BeginExpansion
\mathbb{R}
%EndExpansion
_{+}\times U}{\inf}\left\{  \alpha\left(  s,y\right)  w\left(  s,y\right)
+K_{2}\left(  s,y\right)  \left(  1-\rho\left(  s,y\right)  \right)
C_{\gamma}\left(  s,y\right)  \right\}
\end{align*}
and
\[
\left\Vert \widehat{e}\left(  t,.\right)  \right\Vert _{L^{2}\left(  U\right)
}^{2}\leq\exp\left(  -2t\underset{\left(  s,y\right)  \in%
%TCIMACRO{\U{211d} }%
%BeginExpansion
\mathbb{R}
%EndExpansion
_{+}\times U}{\inf}\left\{  \alpha\left(  s,y\right)  w\left(  s,y\right)
+K_{2}\left(  s,y\right)  \left(  1-\rho\left(  s,y\right)  \right)
C_{\gamma}\left(  s,y\right)  \right\}  \right)  \left\Vert \widehat{e}\left(
0,.\right)  \right\Vert _{L^{2}\left(  U\right)  }^{2}\text{.}%
\]
Since the function $C_{\gamma}$ is positive the results holds.

\item[$\left(  iii\right)  $] The error $\widehat{e}$ satisfies%
\begin{align*}
d_{t}\left\Vert \widehat{e}\left(  t,.\right)  \right\Vert _{L^{2}\left(
U\right)  }^{2}/2  &  =-\int\nolimits_{U}\alpha\left(  t,x\right)  w\left(
t,x\right)  \widehat{e}^{2}\left(  t,x\right)  dx-\int\nolimits_{U}%
\left\langle A\left(  t,x\right)  \nabla\widehat{e}^{2}\left(  t,x\right)
,\nabla\widehat{e}^{2}\left(  t,x\right)  \right\rangle dx\\
&  \,\,\,\,\,\,\,-\int\nolimits_{U}K_{1}\left(  t,x\right)  \Phi_{1}\left(
t,x,\widehat{\theta}_{1}\left(  t,x\right)  ,\widehat{v}_{1}\left(
t,x\right)  \right)  \widehat{e}^{2}\left(  t,x\right)  dx\\
&  \leq-\int\nolimits_{U}\frac{v+\left(  1+\varepsilon-\theta\right)
\partial_{\theta}v}{v}K_{1}\left(  t,x\right)  \delta\left(  \widehat{\theta
}_{1}\right)  \widehat{e}^{2}\left(  t,x\right)  dx\\
&  \,\,\,\,\,\,\,-\int\nolimits_{U}\alpha\left(  t,x\right)  w\left(
t,x\right)  \widehat{e}^{2}\left(  t,x\right)  dx\\
&  \approx-\int\nolimits_{U}\frac{K_{1}\left(  t,x\right)  \delta\left(
\widehat{\theta}\right)  \beta\left(  t,\theta\right)  \left(  \eta v_{\max
}\left(  1+\varepsilon-\theta\right)  -v\right)  }{\alpha\eta vv_{\max}\left(
1-\theta w\right)  }\widehat{e}^{2}\left(  t,x\right)  dx\\
&  \,\,\,\,\,\,\,-\int\nolimits_{U}\left(  \alpha\left(  t\right)  w\left(
t\right)  +K_{1}\left(  t\right)  \delta\left(  \widehat{\theta}\right)
\right)  \widehat{e}^{2}\left(  t,x\right)  dx\text{.}%
\end{align*}
Using the condition $\left(  \ref{ConditionStabilityPde1}\right)  $ we deduce
that $\widehat{\theta}$ is a locally exponentially asymptotically stable
observers for $\theta$.

\item[$\left(  iv\right)  $] Using the same arguments developped in $\left(
ii\right)  $ and $\left(  iii\right)  $ we get that $\widehat{\theta}$ is a
locally exponentially asymptotically stable observers for $\theta$. With the
conditions $\left(  \ref{ConditionCoercive2}\right)  $ and $\left(
\ref{ConditionDominance2}\right)  $ we are sure that even if the term with
$\Phi_{1}$ introduces an unstability, the stability is maintained by the term
with $\Phi_{2}$.
\end{enumerate}
\end{proof}

\subsection{Numerical evaluation of the spatial model's
observers\label{ObserversEvalutionNum2}}

In this section we present numerical simulations in order to check the
effectiveness of the observers we studied theoretically in the subsection
\ref{SubsectionDesignTheoric2}. The duration of the simulations is the year.
Parameters are taken following \cite{fotsa} and generalize those in the
subsection \ref{ObserversEvalutionNum1}. We assume a anisotropic radial
control strategy $u$ given for every $\left(  t,x\right)  \in%
%TCIMACRO{\U{211d} }%
%BeginExpansion
\mathbb{R}
%EndExpansion
_{+}\times U$,
\begin{equation}
u(t,x)=\sin^{2}\left(  \left\Vert M\left(  x-x_{0}\right)  \right\Vert
^{2}\right)  \sin^{2}\left(  \omega_{1}\left(  t-\varphi_{1}\right)
^{2}\right)  \exp\left(  -\omega_{2}\left(  t-\varphi_{2}\right)  ^{2}\right)
\text{.}%
\end{equation}
The matrix $M$ introduces a geometrical anisotropy in the distribution of $u$.
The functions $\alpha,\beta$ and $\gamma$ are taken such as
\begin{equation}
\alpha\left(  t,x\right)  =p_{1}\left(  t,x\right)  +b_{1}q_{1}\left(
x\right)  \left(  1-\cos\left(  c_{1}t\right)  \right)  \left(  t-d_{1}%
\right)  ^{2}\text{, }\left(  t,x\right)  \in%
%TCIMACRO{\U{211d} }%
%BeginExpansion
\mathbb{R}
%EndExpansion
_{+}\times U
\end{equation}%
\begin{equation}
\beta\left(  t,x,y\right)  =b_{2}q_{2}\left(  x\right)  \left(  1-\cos\left(
c_{2}t\right)  \right)  \left(  t-d_{2}\right)  ^{2}p_{2}\left(  y\right)
\text{, }\left(  t,x,y\right)  \in%
%TCIMACRO{\U{211d} }%
%BeginExpansion
\mathbb{R}
%EndExpansion
_{+}\times U\times%
%TCIMACRO{\U{211d}}%
%BeginExpansion
\mathbb{R}%
%EndExpansion
\end{equation}
and
\begin{equation}
\gamma\left(  t,x,y_{1},y_{2},y_{3}\right)  =b_{3}q_{3}\left(  x\right)
\left(  1-\cos\left(  c_{3}t\right)  \right)  \left(  t-d_{3}\right)
^{2}\left(  y_{1}-\kappa y_{3}\right)  y_{2}\text{, }\left(  t,x,y\right)  \in%
%TCIMACRO{\U{211d} }%
%BeginExpansion
\mathbb{R}
%EndExpansion
_{+}\times U\times%
%TCIMACRO{\U{211d} }%
%BeginExpansion
\mathbb{R}
%EndExpansion
^{3}\text{.}%
\end{equation}
$p_{1}$ is a nonnegative function of the time, $p_{2}$ is a positive function
and for every $i$ in $\left\{  1,2,3\right\}  $, $q_{i}$ is a real anisotropic
radial nonnegative function of the space variable as the control $u$. For
every $i$ in $\left\{  1,2,3\right\}  $, $b_{i}$, $c_{i}$, and $d_{i}$ are
positive coefficients corresponding respectively to the maximal amplitude, the
pulsation and the global maximun of $\alpha,\beta$ and $\gamma$. For every $i$
in $\left\{  1,2,3\right\}  $, $q_{i}$ is a $\left[  0,1\right]  $-valued
function of the space variable. The matrix $A$ in equation $\left(
\ref{ModelSpace1}\right)  $ is assumed to be equal to $10^{-2}I$ because the
diffusion is relatively slow. The functions $K_{1}$ and $K_{2}$ are taken
constant and are similar with $k_{1}$ and $k_{2}$ defined in subsection
\ref{ObserversEvalutionNum1}. The rest of the parameters are the same
described in the table \ref{TableParam1}. The initial conditions are also
taken following the subsection \ref{ObserversEvalutionNum1} and are constant
with respect to the space variable in order to easily fulfill the boundary
conditions $\left(  \ref{ModelSpace2}\right)  $. The following table specifies
some assumed values of additional parameters: For every $i\in\left\{
1;2;3\right\}  $,

\begin{table}[ptbh]
\centering%
\begin{tabular}
[c]{|c|c|c|c|c|c|}\hline
\textbf{Parameters} & \textbf{Values} & \textbf{Source} & \textbf{Parameters}
& \textbf{Values} & \textbf{Source}\\\hline
$q_{i}\left(  x\right)  $ & $\frac{\sin^{2}\left(  \left\Vert M_{i}\left(
x-x_{i}\right)  \right\Vert ^{2}\right)  +1}{2}$ & Assumed & $M,M_{i}$ &
randomly generated & \\
$x_{i}$ & $0$ & Assumed &  & $5\ast\operatorname{rand}\left(  3,3\right)  $ &
Assumed\\\hline
\end{tabular}
%title of Table
\caption{Simulation parameters for the spatial model}%
\label{TableParam2}%
\end{table}

In the remainder of the subsection we present simulations of the observers and
their estimation absolute relative error dynamics. To keep the representation
simple we just plot the spatial minimum values, the spatial average values and
the spatial maximum values. Instead of the estimation relative error we
consider the estimation absolute relative error to avoid compensations when
averaging. The initial condition $\rho\left(  0,.\right)  $ is always taken
equal to $\theta\left(  0,.\right)  $. Indeed, following simulations in
subsection \ref{ObserversEvalutionNum1} the performances of the observers
decrease with respect to $\rho\left(  0,.\right)  $ and on the other hand, it
seems realistic to take $\rho\left(  0,.\right)  $ less than $\theta\left(
0,.\right)  $.

\subsubsection{Simulations with $\theta\left(  0,.\right)  =v\left(
0,.\right)  =0.05$}

The figures \ref{figInhPDE11} and \ref{figErrPDE11} show the behaviour of the
observers when the observation is started early. As we can see the global
performance of each observer is relatively good but the case $k_{1}=0$ and
$k_{2}=10^{3}$ seems to be the better one.

\begin{figure}[ptbh]
\centering
\begin{tabular}
[c]{cc}%
\raisebox{-0cm}{\includegraphics[
			natheight=12.171500cm,
			natwidth=16.138599cm,
			height=6.1549cm,
			width=8.1385cm
			]{EstimInhRatPDE0011.jpg}} & \raisebox{-0cm}{\includegraphics[
			natheight=12.171500cm,
			natwidth=16.138599cm,
			height=6.1549cm,
			width=8.1385cm
			]{EstimInhRatPDE0111.jpg}}\\
$k_{1}=k_{2}=0$ & $k_{1}=0$ and $k_{2}=10^{3}$\\
\raisebox{-0cm}{\includegraphics[
			natheight=12.171500cm,
			natwidth=16.138599cm,
			height=6.1549cm,
			width=8.1385cm
			]{EstimInhRatPDE1011.jpg}} & \raisebox{-0cm}{\includegraphics[
			natheight=12.171500cm,
			natwidth=16.138599cm,
			height=6.1549cm,
			width=8.1385cm
			]{EstimInhRatPDE1111.jpg}}\\
$k_{1}=10^{3}$ and $k_{2}=0$ & $k_{1}=10^{3}$ and $k_{2}=10^{3}$%
\end{tabular}
%title of Figure
\caption{Inhibition rate and observers for $\theta\left(  0,.\right)
=v\left(  0,.\right)  =0.05$.}%
\label{figInhPDE11}%
\end{figure}

\begin{figure}[ptbh]
\centering
\begin{tabular}
[c]{cc}%
\raisebox{-0cm}{\includegraphics[
			natheight=12.171500cm,
			natwidth=16.138599cm,
			height=6.1527cm,
			width=8.1385cm
			]{ReAbsErrPDE0011.jpg}} & \raisebox{-0cm}{\includegraphics[
			natheight=12.171500cm,
			natwidth=16.138599cm,
			height=6.1527cm,
			width=8.1385cm
			]{ReAbsErrPDE0111.jpg}}\\
$k_{1}=k_{2}=0$ & $k_{1}=0$ and $k_{2}=10^{3}$\\
\raisebox{-0cm}{\includegraphics[
			natheight=12.171500cm,
			natwidth=16.138599cm,
			height=6.1527cm,
			width=8.1385cm
			]{ReAbsErrPDE1011.jpg}} & \raisebox{-0cm}{\includegraphics[
			natheight=12.171500cm,
			natwidth=16.138599cm,
			height=6.1527cm,
			width=8.1385cm
			]{ReAbsErrPDE1111.jpg}}\\
$k_{1}=10^{3}$ and $k_{2}=0$ & $k_{1}=10^{3}$ and $k_{2}=10^{3}$%
\end{tabular}
%title of Figure
\caption{Relative absolute estimation error for $\theta\left(  0,.\right)
=v\left(  0,.\right)  =0.05$.}%
\label{figErrPDE11}%
\end{figure}\newpage

\subsubsection{Simulations with $\theta\left(  0,.\right)  =0.05$ and
$v\left(  0,.\right)  =0.5$}

The figures \ref{figInhPDE13} and \ref{figErrPDE13} correspond to an
observation starting at the beginning of the disease but with a well
developped berry. We have better performances when $k_{2}=10^{3}$. With
$k_{1}=10^{3}$ and $k_{2}=0$ we notice an unstability behaviour in figure
\ref{figErrPDE13} although the error takes the null value. That may be due to
the constant sign of the function $\phi_{1}$.

\begin{figure}[ptbh]
\centering
\begin{tabular}
[c]{cc}%
\raisebox{-0cm}{\includegraphics[
			natheight=12.171500cm,
			natwidth=16.138599cm,
			height=6.1549cm,
			width=8.1385cm
			]{EstimInhRatPDE0012.jpg}} & \raisebox{-0cm}{\includegraphics[
			natheight=12.171500cm,
			natwidth=16.138599cm,
			height=6.1549cm,
			width=8.1385cm
			]{EstimInhRatPDE0112.jpg}}\\
$k_{1}=k_{2}=0$ & $k_{1}=0$ and $k_{2}=10^{3}$\\
\raisebox{-0cm}{\includegraphics[
			natheight=12.171500cm,
			natwidth=16.138599cm,
			height=6.1549cm,
			width=8.1385cm
			]{EstimInhRatPDE1012.jpg}} & \raisebox{-0cm}{\includegraphics[
			natheight=12.171500cm,
			natwidth=16.138599cm,
			height=6.1549cm,
			width=8.1385cm
			]{EstimInhRatPDE1112.jpg}}\\
$k_{1}=10^{3}$ and $k_{2}=0$ & $k_{1}=10^{3}$ and $k_{2}=10^{3}$%
\end{tabular}
%title of Figure
\caption{Inhibition rate and observers for $\theta\left(  0,.\right)  =0.05$
and $v\left(  0,.\right)  =0.5$.}%
\label{figInhPDE13}%
\end{figure}

\begin{figure}[ptbh]
\centering
\begin{tabular}
[c]{cc}%
\raisebox{-0cm}{\includegraphics[
			natheight=12.171500cm,
			natwidth=16.138599cm,
			height=6.1549cm,
			width=8.1385cm
			]{ReAbsErrPDE0012.jpg}} & \raisebox{-0cm}{\includegraphics[
			natheight=12.171500cm,
			natwidth=16.138599cm,
			height=6.1549cm,
			width=8.1385cm
			]{ReAbsErrPDE0112.jpg}}\\
$k_{1}=k_{2}=0$ & $k_{1}=0$ and $k_{2}=10^{3}$\\
\raisebox{-0cm}{\includegraphics[
			natheight=12.171500cm,
			natwidth=16.138599cm,
			height=6.1549cm,
			width=8.1385cm
			]{ReAbsErrPDE1012.jpg}} & \raisebox{-0cm}{\includegraphics[
			natheight=12.171500cm,
			natwidth=16.138599cm,
			height=6.1549cm,
			width=8.1385cm
			]{ReAbsErrPDE1112.jpg}}\\
$k_{1}=10^{3}$ and $k_{2}=0$ & $k_{1}=10^{3}$ and $k_{2}=10^{3}$%
\end{tabular}
%title of Figure
\caption{Relative absolute estimation error for $\theta\left(  0,.\right)
=0.05$ and $v\left(  0,.\right)  =0.5$.}%
\label{figErrPDE13}%
\end{figure}\newpage

\subsubsection{Simulations with $\theta\left(  0,.\right)  =0.75$ and
$v\left(  0,.\right)  =0.05$}

In the figures \ref{figInhPDE41} and \ref{figErrPDE41} the inhibition rate is
already high while the berry is little. As expected the worst estimation
corresponds to $k_{1}=k_{2}=0$. The best estimations are still made when
$k_{2}=10^{3}$. However, when $k_{1}=10^{3}$, the same unstability behaviour
in figure \ref{figErrPDE13} appears.

\begin{figure}[ptbh]
\centering
\begin{tabular}
[c]{cc}%
\raisebox{-0cm}{\includegraphics[
			natheight=12.171500cm,
			natwidth=16.138599cm,
			height=6.1549cm,
			width=8.1385cm
			]{EstimInhRatPDE0021.jpg}} & \raisebox{-0cm}{\includegraphics[
			natheight=12.171500cm,
			natwidth=16.138599cm,
			height=6.1549cm,
			width=8.1385cm
			]{EstimInhRatPDE0121.jpg}}\\
$k_{1}=k_{2}=0$ & $k_{1}=0$ and $k_{2}=10^{3}$\\
\raisebox{-0cm}{\includegraphics[
			natheight=12.171500cm,
			natwidth=16.138599cm,
			height=6.1549cm,
			width=8.1385cm
			]{EstimInhRatPDE1021.jpg}} & \raisebox{-0cm}{\includegraphics[
			natheight=12.171500cm,
			natwidth=16.138599cm,
			height=6.1549cm,
			width=8.1385cm
			]{EstimInhRatPDE1121.jpg}}\\
$k_{1}=10^{3}$ and $k_{2}=0$ & $k_{1}=10^{3}$ and $k_{2}=10^{3}$%
\end{tabular}
%title of figure
\caption{Inhibition rate and observers for $\theta\left(  0,.\right)  =0.75$
and $v\left(  0,.\right)  =0.05$.}%
\label{figInhPDE41}%
\end{figure}

\begin{figure}[ptbh]
\centering
\begin{tabular}
[c]{cc}%
\raisebox{-0cm}{\includegraphics[
			natheight=12.171500cm,
			natwidth=16.138599cm,
			height=6.1527cm,
			width=8.1385cm
			]{ReAbsErrPDE0021.jpg}} & \raisebox{-0cm}{\includegraphics[
			natheight=12.171500cm,
			natwidth=16.138599cm,
			height=6.1527cm,
			width=8.1385cm
			]{ReAbsErrPDE0121.jpg}}\\
$k_{1}=k_{2}=0$ & $k_{1}=0$ and $k_{2}=10^{3}$\\
\raisebox{-0cm}{\includegraphics[
			natheight=12.171500cm,
			natwidth=16.138599cm,
			height=6.1527cm,
			width=8.1385cm
			]{ReAbsErrPDE1021.jpg}} & \raisebox{-0cm}{\includegraphics[
			natheight=12.171500cm,
			natwidth=16.138599cm,
			height=6.1527cm,
			width=8.1385cm
			]{ReAbsErrPDE1121.jpg}}\\
$k_{1}=10^{3}$ and $k_{2}=0$ & $k_{1}=10^{3}$ and $k_{2}=10^{3}$%
\end{tabular}
%title of Table
\caption{Relative absolute estimation error for $\theta\left(  0,.\right)
=0.75$ and $v\left(  0,.\right)  =0.05$. }%
\label{figErrPDE41}%
\end{figure}\newpage

\subsubsection{Simulations with $\theta\left(  0,.\right)  =0.75$ and
$v\left(  0,.\right)  =0.5$}

For the figures \ref{figInhPDE43} and \ref{figErrPDE43} the inhibition rate is
already high and the fruit is relatively mature when the observation starts.
The worst case are still when $k_{2}=0$ and the best are with $k_{2}=10^{3}$.

\begin{figure}[ptbh]
\centering
\begin{tabular}
[c]{cc}%
\raisebox{-0cm}{\includegraphics[
			natheight=12.171500cm,
			natwidth=16.138599cm,
			height=6.1549cm,
			width=8.1385cm
			]{EstimInhRatPDE0022.jpg}} & \raisebox{-0cm}{\includegraphics[
			natheight=12.171500cm,
			natwidth=16.138599cm,
			height=6.1549cm,
			width=8.1385cm
			]{EstimInhRatPDE0122.jpg}}\\
$k_{1}=k_{2}=0$ & $k_{1}=0$ and $k_{2}=10^{3}$\\
\raisebox{-0cm}{\includegraphics[
			natheight=12.171500cm,
			natwidth=16.138599cm,
			height=6.1549cm,
			width=8.1385cm
			]{EstimInhRatPDE1022.jpg}} & \raisebox{-0cm}{\includegraphics[
			natheight=12.171500cm,
			natwidth=16.138599cm,
			height=6.1549cm,
			width=8.1385cm
			]{EstimInhRatPDE1122.jpg}}\\
$k_{1}=10^{3}$ and $k_{2}=0$ & $k_{1}=10^{3}$ and $k_{2}=10^{3}$%
\end{tabular}
%title of figure
\caption{Inhibition rate and observers for $\theta\left(  0,.\right)  =0.75$
and $v\left(  0,.\right)  =0.5$.}%
\label{figInhPDE43}%
\end{figure}

\begin{figure}[ptbh]
\centering
\begin{tabular}
[c]{cc}%
\raisebox{-0cm}{\includegraphics[
			natheight=12.171500cm,
			natwidth=16.138599cm,
			height=6.1527cm,
			width=8.1385cm
			]{ReAbsErrPDE0022.jpg}} & \raisebox{-0cm}{\includegraphics[
			natheight=12.171500cm,
			natwidth=16.138599cm,
			height=6.1527cm,
			width=8.1385cm
			]{ReAbsErrPDE0122.jpg}}\\
$k_{1}=k_{2}=0$ & $k_{1}=0$ and $k_{2}=10^{3}$\\
\raisebox{-0cm}{\includegraphics[
			natheight=12.171500cm,
			natwidth=16.138599cm,
			height=6.1527cm,
			width=8.1385cm
			]{ReAbsErrPDE1022.jpg}} & \raisebox{-0cm}{\includegraphics[
			natheight=12.171500cm,
			natwidth=16.138599cm,
			height=6.1527cm,
			width=8.1385cm
			]{ReAbsErrPDE1122.jpg}}\\
$k_{1}=10^{3}$ and $k_{2}=0$ & $k_{1}=10^{3}$ and $k_{2}=10^{3}$%
\end{tabular}
%title of figure
\caption{Relative absolute estimation error for $\theta\left(  0,.\right)
=0.75$ and $v\left(  0,.\right)  =0.5$.}%
\label{figErrPDE43}%
\end{figure}\newpage

\section{Conclusion and prospects\label{Conclusion}}

We have studied four observers for the dynamics of anthracnose disease. Our
constructions were based on the models given in \cite{fotsa}. We consider two
models: a spatial version and a non spatial one stated respectively by the
equations $\left(  \ref{ModelSpace1}\right)  -\left(  \ref{ModelSpace6}%
\right)  $ and $\left(  \ref{ModelIntraHote1}\right)  -\left(
\ref{ModelIntraHote4}\right)  $. Some changes have been made in the original
form of those models in order to be more realistic. Precisely, we have changed
the equations modelling the dynamics of the berry volume $\left(  v\right)  $
and the rot volume $\left(  v_{r}\right)  $. Indeed, before the launch of the
disease, the berry has to reach a minimal volume. On the other hand the rot
volume shall never be greater than total volume. We found that the system
describing the disease dynamics can be viewed as the natural observer of the
system even the performances are poor. We have studied theoretically two
observers which gives fairly good results looking at the relative error.
Several simulations have been carried out to assess the effectiveness of our
proposed observers. That work has been done for the spatial and the non
spatial models.

Although proposed observers seem to display fairly good results, we think that
a stochastic approach could be introduced in the estimation process. Indeed,
the parameters of the models are submitted to several stochastic variations.
Moreover, the measures of $v$ and $v_{r}$ also contains errors. A mean to take
into account those errors could consist in adding noises in our models and
proceed to the stochastic filtering by different existing tools. That aspect
is studied in on going works of some of the authors of the current paper.


\begin{thebibliography}{99}                                                                                               %


\bibitem {anita}ANITA S., Analysis and Control of Age-Dependent Population
Dynamics, Kluwer Academic Publishers, London, 2000.

\bibitem {bieysse}BIEYSSE D., BELLA-MANGA D., MOUEN B., NDEUMENI J., ROUSSEL
J., FABRE V. and BERRY D., L'anthracnose des baies une menace potentielle pour
la culture mondiale de l'arabica. plantations, recherche, d\'{e}veloppement,
pp 145-152, 2002.

\bibitem {barbu}BARBU V., Partial Differential Equations and Boundary Value
Problems, Kluwer Academic Publishers, Dordrecht, 1998.

\bibitem {benzoni}BENZONI-GAVAGE S., Calcul Diff\'{e}rentiel et \'{E}quations
Diff\'{e}rentielles, Dunod, Paris, 2010.

\bibitem {boisson}BOISSON C., L'anthracnose du caf\'{e}ier, revue de
mycologie, 1960.

\bibitem {chen}CHEN Z., NUNES M., SILVA M., RODRIGUEZ J., Appressorium turgor
pressure of Colletotrichum kahawae might have a role in coffee cuticle
penetration, mycologia, \textbf{96}(\textbf{6}), pp. 1199--1208, 2004.

\bibitem {coddington}CODDINGTON E., LEVINSON N., Theory of Ordinary
Differential Equations, McGraw-Hill, New York, 1955.

\bibitem {coron}CORON J. M., Control and Nonlinearity, AMS, Mathematical
surveys and Monographs, Vol \textbf{136}, 2007.

\bibitem {danneberger}DANNEBERGER T., VARGAS J., JONES Jr., and JONES A., A
model for weather-based forecasting of anthracnose on annual bluegrass,
Phytopathology, Vol. \textbf{74}, No. \textbf{4}, pp 448-451, 1984.

\bibitem {dodd}DODD J., ESTRADA A., MATCHAM J., JEFFRIES \ P., JEGER J., The
effect of climatic factors on Colletotrichum gloeosporioides, causal agent of
mango anthracnose, in the Philippines, Plant Pathology (\textbf{40}), pp
568-575, 1991.

\bibitem {duthie}DUTHIE J., Models of the response of foliar parasites to the
combined effects of temperature and duration of wetness, Phytopathology, Vol.
\textbf{87}, No. \textbf{11}, 1997.

\bibitem {fotsa}FOTSA D., HOUPA E., BEKOLLE D., THRON C., NDOUMBE M.,
Mathematical modelling and optimal control of anthracnose, Biomath, Vol.
\textbf{3}, 2014.

\bibitem {fotsa2}FOTSA D., THRON C., Optimal control of anthracnose using
mixed strategy, Math.Biosci., Vol. \textbf{269}, pp 186-198, 2015.

\bibitem {jeffries}JEFFRIES P., DODD J., JEGER M., PLUMBLEY R., The biology
and the control of Colletotrichum spieces on tropical fruit crops, Plant
Pathology (\textbf{39}), pp 343-366, 1990.

\bibitem {luenberger}LUENBERGER D., Introduction to Dynamic Systems. Theory,
Models, and Applications, John Wiley \& Sons, 1979.

\bibitem {mouen07}MOUEN B., BIEYSSE D., CILAS C., and NOTTEGHEM J.,
Spatio-temporal dynamics of arabica coffee berry disease caused by
Colletotrichum kahawae on a plot scale. Plant Dis. \textbf{91}: 1229-1236, 2007.

\bibitem {mouen09}MOUEN B., BIEYSSE D., NYASSE S., NOTTEGHEM J., and CILAS C.,
Role of rainfall in the development of coffee berry disease in coffea arabica
caused by Colletotrichum kahawae, in cameroon, Plant pathology, 2009.

\bibitem {mouen072}MOUEN B., BIEYSSE D., NJIAYOUOM I., DEUMENI J., CILAS C.,
and NOTTEGHEM J.,Effect of cultural practices on the development of arabica
coffee berry disease, caused by Colletotrichum kahawae, Eur J Plant Pathol.
\textbf{119}: 391--400, 2007.

\bibitem {mouen03}MOUEN B., CHILLET M., JULLIEN A., BELLAIRE L., Le gainage
pr\'{e}coce des r\'{e}gimes de bananes am\'{e}liore la croissance des fruits
et leur \'{e}tat sanitaire vis-\`{a}-vis de l'anthracnose (Colletotrichum
musae), fruits, vol. \textbf{58}, p. 71--81, 2003.

\bibitem {mouen08}MOUEN B., NJIAYOUOM I., BIEYSSE D., NDOUMBE N., CILAS C.,
and NOTTEGHEM J., Effect of shade on arabica coffee berry disease development:
Toward an agroforestry system to reduce disease impact, Phytopathology, Vol.
\textbf{98}, No. \textbf{12}, 2008.

\bibitem {muller70}MULLER R., L'\'{e}volution de l'anthracnose des baies du
caf\'{e}ier d'arabie (coffea arabica) due \`{a} une forme du Colletotrichum
coffeanum Noack au Cameroun, caf\'{e} cacao th\'{e}, vol. \textbf{XIV}
(\textbf{2}), 1970.

\bibitem {trinh12}TRINH H., FERNANDO T., Functional Observers for Dynamical
Systems, Lecture Notes in Control and Information Sciences, Springer Berlin
Heidelberg, 2012.

\bibitem {tuscnak}TUSCNAK M., G. WEISS, Observation and Control for Operator
Semigroups, Birkh\"{a}user Verlag AG, Berlin, 2009.

\bibitem {wharton}WHARTON P., DIEGUEZ-URIBEONDO J., The biology of
Colletotrichum acutatum, Anales del Jard\'{\i}n Bot\'{a}nico de Madrid
\textbf{61}(\textbf{1}): 3-22, 2004.
\end{thebibliography}
\end{document}